\tikzset{snake it/.style={decorate, decoration=snake}}
\newtheorem{theorem}{Theorem}[section]
\newtheorem{lemma}[theorem]{Lemma}
\newtheorem{corollary}[theorem]{Corollary}
\newtheorem*{maintheorem1}{Main Theorem}
\newtheorem*{maintheorem2}{Theorem}
\newtheorem*{theorem*}{Theorem}
\newtheorem*{corollary*}{Corollary}
\newtheorem*{claim}{Claim}
\newtheorem*{sub-claim}{sub-claim}
\theoremstyle{definition}
\newtheorem{example}[theorem]{Example}
\newtheorem{definition}[theorem]{Definition}
\newtheorem*{notation}{Notation}
\theoremstyle{remark}
\newcommand{\N}{\mathbb{N}}
\newcommand{\A}{\mathcal{A}}
\newcommand{\B}{\mathcal{B}}
\newcommand{\C}{\mathcal{C}}
\newcommand{\F}{\mathcal{F}}
\newcommand{\G}{\mathcal{G}}
\newcommand{\HH}{\mathcal{H}}
\newcommand{\I}{\mathcal{I}}
\newcommand{\J}{\mathcal{J}}
\renewcommand{\P}{\mathcal{P}}
\newcommand{\explicitSet}[1]{\left\lbrace #1 \right\rbrace}
\newcommand{\brackets}[1]{\left\langle #1 \right\rangle}
\newcommand{\set}[2]{\explicitSet{#1 \colon #2}}
\newcommand{\seq}[2]{\brackets{#1 \colon #2}}
\newcommand{\g}{\gamma}
\newcommand{\s}{\sigma}
\newcommand{\w}{\omega}
\newcommand{\0}{\emptyset}
\newcommand{\sub}{\subseteq}
\newcommand{\rest}{\!\restriction\!}
\newcommand{\closure}[1]{\overline{#1}}
\newcommand{\card}[1]{\left\lvert #1 \right\rvert}
\newcommand{\continuum}{\mathfrak{c}}
\newcommand{\ser}{\ss}
\newcommand{\Ser}{I}
\newcommand{\Hyp}{H}
\newcommand{\specialcell}[1]{\ifmeasuring@#1\else\omit$\displaystyle#1$\ignorespaces\fi}
\begin{document}

\title{Choosing between incompatible ideals}
\author[W. R. Brian]{Will Brian}
\address {
W. R. Brian\\
Department of Mathematics and Statistics\\
University of North Carolina at Charlotte\\
9201 University City Blvd.\\
Charlotte, NC}
\email{wbrian.math@gmail.com}
\urladdr{wrbrian.wordpress.com}
\author[P. B. Larson]{Paul B. Larson}
\address[P.~B.~Larson]{Department of Mathematics, Miami University,
  Oxford, OH 45056, U.S.A.}
\email{larsonpb@miamioh.edu}
\urladdr{http://www.users.miamioh.edu/larsonpb/}
\subjclass[2010]{03E05, 05D05, 05C65, 40A05, 40B05}
\keywords{ideals, hypergraphs, partitions, conditionally convergent series}
\thanks{We wish to thank Louis DeBiasio for his input on an earlier draft of this paper, especially his thoughts concerninbg the numbers $\Hyp(n)$.}

\begin{abstract}
Suppose $\I$ and $\J$ are proper ideals on some set $X$. We say that $\I$ and $\J$ are \emph{incompatible} if $\I \cup \J$ does not generate a proper ideal. Equivalently, $\I$ and $\J$ are incompatible if there is some $A \sub X$ such that $A \in \I$ and $X \setminus A \in \J$. If some $B \subseteq X$ is either in $\I \setminus \J$ or in $\J \setminus \I$, then we say that $B$ \emph{chooses} between $\I$ and $\J$.

We consider the following Ramsey-theoretic problem: Given several pairs $(\I_1,\J_1), (\I_2,\J_2), \dots, (\I_k,\J_k)$ of incompatible ideals on a set $X$, find some $A \sub X$ that chooses between as many of these pairs of ideals as possible. 
The main theorem is that for every $n \in \N$, there is some $\Ser(n) \in \N$ such that given at least $\Ser(n)$ pairs of incompatible ideals on any set $X$, there is some $A \sub X$ choosing between at least $n$ of them.

This theorem is proved in two main steps. The first step is to identify a (purely finitary) problem in extremal combinatorics, and to show that our problem concerning ideals is equivalent to this combinatorial problem. The second step is to analyze the combinatorial problem in order to show that the number $\Ser(n)$ described above exists, and to put bounds on it. We show 
$$\textstyle \frac{1}{2}n \log_2 n - O(n) \,<\, \Ser(n) \,<\, n \ln n + O(n).$$

The upper bound is proved by considering a different but closely related combinatorial problem involving hypergraphs, which may be of independent interest.
We also investigate some applications of this theorem to a problem concerning conditionally convergent series.
\end{abstract}

\maketitle


\section{Introduction}

An \emph{ideal} on a set $X$ is a set $\I$ of subsets of $X$ such that
\begin{itemize}
\item[$\circ$] every finite subset of $X$ is in $\I$,
\item[$\circ$] if $A,B \in \I$ then $A \cup B \in \I$, and
\item[$\circ$] if $A \in \I$ and $B \sub A$, then $B \in \I$.
\end{itemize}
An ideal $\I$ is \emph{proper} if $\I \neq \P(X)$ or, equivalently, if $X \notin \I$. 
Two proper ideals $\I$ and $\J$ on a set $X$ are \emph{incompatible} if their union is not contained in any proper ideal. Equivalently, $\I$ and $\J$ are incompatible if there is some $A \sub X$ such that $A \in \I$ and $X \setminus A \in \J$. If $B \sub X$ and if either $B \in \I \setminus \J$ or $B \in \J \setminus \I$, then we say that $B$ \emph{chooses} between $\I$ and $\J$.

This paper investigates the following Ramsey-theoretic problem: given several pairs $(\I_1,\J_1), (\I_2,\J_2), \dots, (\I_k,\J_k)$ of incompatible ideals on a set $X$, find some $A \sub X$ that chooses between as many of these pairs of ideals as possible. In other words, we would like to make many choices simultaneously, using a single $A \sub X$.

\begin{maintheorem1}
For each $n \in \N$, there is some $\Ser(n) \in \N$ such that for any collection of $\Ser(n)$ pairs of incompatible ideals on a set $X$, there is some $A \sub X$ that chooses between at least $n$ of those pairs.
\end{maintheorem1}

This theorem is proved in two stages. The first stage is to identify a (purely finitary) problem in extremal combinatorics, and to show that our problem concerning ideals is equivalent to this combinatorial problem. The second stage is to analyze the combinatorial problem to show that the number $\Ser(n)$ exists, and to put bounds on it. Namely, we show that 
$$\textstyle \frac{1}{2} n \log_2 n - \frac{1}{2}n + \frac{3}{2} \,<\, \Ser(n+1) \,<\, 1+\sum_{k=1}^n\frac{n}{k}$$
Let us note that $\sum_{k = 1}^n \frac{n}{k} \,<\, n \ln n + \g n + \frac{1}{2}$,
where $\g \approx .5772156649$ is the Euler-Mascheroni constant, so that the upper and lower bounds on $I(n)$ match up to a constant factor. In other words, $\Ser(n) = \Theta(n \log n)$.

The first stage of this proof is contained in Section~\ref{sec:reduction}, and the second in the first part of Section~\ref{sec:combinatorics1}. The second part of Section~\ref{sec:combinatorics1} and all of Section~\ref{sec:combinatorics2} are devoted to finding upper and lower bounds for $\Ser(n)$. The upper bound is proved by introducing a second problem of extremal combinatorics concerning hypergraphs. Roughly, the problem is, given some hypergraph $(V,\HH)$, to find some $X \sub V$ and $\HH' \sub \HH$, with $X$ as large as possible, so that $\HH'$ induces a partition on $X$. This problem, which may have some independent interest, is described more precisely and analyzed in Section~\ref{sec:combinatorics1}.

In Section~\ref{sec:series}, we apply the main theorem to a problem concerning infinite series. 
A \emph{conditionally convergent series} is a sequence $\seq{a_n}{n \in \N}$ of real numbers such that $\sum_{n \in \N}a_n$ converges but $\sum_{n \in \N}|a_n|$ does not. Every conditionally convergent series $\seq{a_n}{n \in \N}$ has a subseries $\seq{a_{n_k}}{k \in \N}$ summing to $\infty$ (e.g., the subseries obtained by summing only over positive terms), and a different subseries summing to $-\infty$ (e.g., the subseries obtained by summing only over negative terms). Every conditionally convergent series also has many subseries that diverge by oscillation; for example, one may construct such a subseries by interleaving long stretches of positive terms with long stretches of negative terms.

\begin{definition}
Let us say that a set $A \sub \N$ \emph{sends a series $\sum_{n \in \N}a_n$ to infinity} if either $\sum_{n \in A}a_n = \infty$ or $\sum_{n \in A}a_n = -\infty$. 
\end{definition}

\noindent Notice that in this definition, we require each subseries to diverge either to $\infty$ or to $-\infty$, and not merely to diverge by oscillation.
Recently, the first author has investigated the problem of sending several series to infinity simultaneously, using a single $A \sub \N$. In summary: 
\begin{itemize}
\item[$\circ$] For any three conditionally convergent series, there is a single $A \sub \N$ sending all three series to infinity simultaneously. (Brian \cite{Brian})
\item[$\circ$] The analogous statement for four series is false: there is a collection of four conditionally convergent series such that no single $A \sub \N$ sends all four series to infinity. (Nazarov \cite{Nazarov}, \cite[Section 3]{Brian})
\end{itemize}

These results suggest the following Ramsey-theoretic problem: given several conditionally convergent series, find some $A \sub X$ that sends as many of them as possible to infinity. In other words, we would like to send many series to infinity simultaneously, using a single $A \sub X$.
As a corollary to the main theorem we obtain:

\begin{maintheorem2}
For each $n \in \N$, there is some $\ser(n) \in \N$ such that for any collection of $\ser(n)$ conditionally convergent series, there is some $A \sub \N$ sending at least $n$ of the series to infinity simultaneously.
\end{maintheorem2}

Specifically, we show that $\ser(n) \leq \Ser(n) = \Theta(n \log n)$. The best lower bound for $\ser(n)$ that we know is $2n-5 \leq \ser(n)$, which is proved in Section~\ref{sec:series} below. This is accomplished by generalizing the aforementioned example of Navarov to find, for every $n \geq 2$, an example of $2n$ conditionally convergent series such that no single $A \sub \N$ can send more than $n+2$ of them to infinity simultaneously.
Finally, Section~\ref{sec:infinite} contains an infinitary version of the result on conditionally convergent series:

\begin{theorem*}
For any infinite collection of conditionally convergent series, there is a single $A \sub \N$ sending infinitely many of the series to infinity.
\end{theorem*}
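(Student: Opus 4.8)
The plan is to deduce this from the finitary theorem above (in the form asserting the existence of $\ser(n)$) by a diagonal construction. First pass to a countably infinite subfamily $s_1,s_2,s_3,\dots$ of the given collection, with $s_k=\seq{a^k_n}{n\in\N}$; a single $A$ sending infinitely many of the $s_k$ to infinity is automatically one for the original collection. Call a finite set $F\sub\N$ \emph{good} if there are a single $A\sub\N$ and signs $\varepsilon_k\in\{+,-\}$ ($k\in F$) with $\sum_{n\in A}\varepsilon_k a^k_n=+\infty$ for every $k\in F$; equivalently, some single $A$ sends every $s_k$, $k\in F$, to infinity. Two elementary facts drive everything: goodness is hereditary, and it is \emph{tail-stable} (deleting finitely many terms cannot disturb divergence to $\pm\infty$, so a witness for $F$ restricted to any tail is again a witness). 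Hence it suffices to produce an \emph{infinite} $K\sub\N$ all of whose finite subsets are good: enumerating $K=\{k_1<k_2<\dots\}$, one fixes a coherent infinite branch $\seq{\varepsilon_i}{i\in\N}$ through the finitely branching tree of realizable sign-patterns for the initial segments $\{k_1,\dots,k_j\}$ (König's lemma), and then builds $A$ as an increasing union of finite blocks, the $j$th block taken from a far tail of a witness for $\{k_1,\dots,k_j\}$ in those signs, with cut-points pushed out past the relevant ``dip points''. That last part is a routine bookkeeping exercise: one must control the partial sums themselves, not merely their limits, and arrange that the bounded junk each $s_{k_i}$ picks up from the finitely many blocks predating $k_i$'s appearance is dominated by the divergence contributed by the later blocks.

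So the whole problem reduces to finding an infinite, locally good $K$. By the Main Theorem, among $s_1,\dots,s_{\ser(m)}$ some set of at least $m$ indices is good, so good sets of every finite size occur, and in fact inside every sufficiently long initial segment of $\N$. One would like to extract an infinite locally good set from this by compactness, but that cannot work on soft grounds alone: goodness is not preserved under increasing unions — already a good triple need not remain good after one more series is adjoined (Nazarov's four-series example) — so the good finite sets need not form a tree with an infinite branch purely for reasons of size. This is the crux, and the fix is to build $K$ \emph{adaptively and in tandem with} $A$: maintain a finite pool of ``active'' indices with assigned signs, at each stage calling the finitary Main Theorem on the active pool together with a fresh block of unused series to locate a large good set, recruiting from it new indices compatible with the active ones, and extending $A$ by a block that advances \emph{every} currently active index a little (possible, since a good set admits a single witness diverting all of its members, a windowed tail of which may be used) while contributing only an amount summable in the stage number to each of the finitely many previously activated indices — so no index, once activated, permanently loses ground.

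The delicate point — the main obstacle — is that the good set produced by the Main Theorem need not contain the entire current active pool; finite collections genuinely exhibit ``at most about half can be served'' behaviour, so some active indices may have to be dropped. One must therefore organize the recruitment under a priority discipline guaranteeing that, although indices come and go, an infinite subcollection is eventually retained forever. This is exactly where the analytic flexibility of conditionally convergent series is spent: for each $s_k$ the positive and negative parts over every tail of $\N$ still diverge, so a suitable subset of any tail re-routes $s_k$ to $+\infty$ or to $-\infty$ at will, and a block can therefore always be pushed out far enough that its total effect on the finitely many already-secured indices is as small as desired. Thus the construction can absorb the unavoidable finite obstructions without ever sacrificing an infinite core of targets, and the resulting $A$ sends each $s_k$ with $k\in K$ to infinity.
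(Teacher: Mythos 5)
You have located the crux correctly, but you have not crossed it. The whole argument rests on the claim that an adaptive recruitment scheme ``under a priority discipline'' eventually retains an infinite set of indices forever, and no such discipline is exhibited or shown to exist. When you apply the finitary theorem at a given stage to the active pool together with fresh series, it returns \emph{some} good set of the promised size, but nothing forces that set to meet the active pool at all: it may lie entirely inside the fresh block, and this can happen at every stage. An index that is dropped receives from each subsequent block of $A$ a contribution that is merely finite and is uncontrolled in aggregate, so an index is served only if it is eventually permanently retained; and finite-injury reasoning gives no leverage, since even the highest-priority index has no guarantee of ever reappearing in the finitary theorem's output. The reduction to ``find an infinite $K$ all of whose finite subsets are good'' is therefore sound but circular in practice: such a $K$ exists exactly because the theorem is true (take $K$ to be the set of indices served by a single $A$), and your construction does not produce one. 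There is also a secondary, non-routine gap in the assembly step: a single witness $A_F$ for a finite good set need not admit arbitrarily far windows in which the running partial sums of \emph{all} the relevant subseries stay above a prescribed level (the right-record-minima of several partial-sum sequences tending to $+\infty$ need not coincide, and two subseries can oscillate out of phase with growing amplitude), so ``pushing the cut-points past the dip points'' cannot be done uniformly for the whole active pool without further input.

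The paper takes a different route that never invokes the finitary theorem and that supplies exactly the missing analytic control. It first proves (Lemma~\ref{lem:tame}) that each single series can be sent to $+\infty$ by a \emph{tame} set, i.e.\ one on which every series in the family has eventually constant sign; a tame block then contributes monotonically to each series it serves (no dips) and \emph{absolutely convergently} to each series it does not serve. It then argues by contradiction: assuming every $A\sub\N$ serves only finitely many series, a fusion argument on the family $\set{f_A}{A \text{ tame}}$ — repeatedly passing to ``large'' subfamilies coordinate by coordinate — produces a guide $g:\w\to\{-1,0,1\}$, and in each of the three resulting cases tails of tame witnesses are glued together, with absolute convergence (and explicit $2^{-k}$ bounds in the hardest case) showing the glued set serves infinitely many series. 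If you wish to salvage your outline, the tameness lemma is the ingredient you need for the assembly step, but the retention claim would still require a genuinely new idea.
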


\section{The reduction}\label{sec:reduction}

In this section we reduce the main theorem stated in the introduction to a (purely finitary) problem of extremal combinatorics. One may think of this as the first half of a proof of the main theorem. The second half, contained in the next section of the paper, is to analyze the combinatorial problem. 

We begin with a formal definition of $\Ser(n)$:

\begin{definition}\label{def:full}
For each $n \in \N$, let $\Ser(n)$ denote the least $k \in \N$ with the following property:
\begin{itemize}
\item[$(*)_n$] For any set $X$ and any collection $(\I_1,\J_1),(\I_2,\J_2),\dots,(\I_k,\J_k)$ of $k$ pairs of incompatible ideals on $X$, there is some $A \sub X$ such that $A$ chooses between $\I_\ell$ and $\J_\ell$ for at least $n$ different values of $\ell$.
\end{itemize}
If there is no such $k \in \N$, then we say that $\Ser(n)$ is not well-defined.
\end{definition}

We now define another number $\tilde \Ser(n)$ for each $n \in \N$. As with $\Ser(n)$, we define it in such a way that it is not yet assumed to be well-defined. The notation $\tilde \Ser(n)$ is used only in this section; it is not needed in later sections because, as we shall show shortly, $\tilde \Ser(n)$ and $\Ser(n)$ are the same.

\begin{definition}\label{def:full}
Let $\mathrm{Fn}(k,2)$ denote the set of all functions from a subset of $\{1,2,\dots,k\}$ to $\{p,n\}$. We say that $\F \sub \mathrm{Fn}(k,2)$ is \emph{full} provided that, for every $i \in \{1,2,\dots,k\}$ and $j \in \{p,n\}$, there is some $f \in \F$ with $f(i) = j$. For each $n \in \N$, let $\tilde \Ser(n)$ denote the least $k \in \N$ with the following property:
\begin{itemize}
\item[$(\dagger)_n$] For any full $\F \sub \mathrm{Fn}(k,2)$, there is some $\G \sub \F$ and $D \sub \bigcup_{g \in \G}\mathrm{dom}(g)$ such that $|D| \geq n$, and no two members of $\G$ disagree on $D$.
\end{itemize}
If there is no such $k \in \N$, then we say that $\tilde \Ser(n)$ is not well-defined.
\end{definition}

In what follows, we shall sometimes represent functions as sets of ordered pairs, and sometimes we shall represent functions and sets of functions as pictures. For example the following two pictures represent subsets of $\mathrm{Fn}(3,2)$:

\vspace{-1mm}
\begin{center}
\begin{tikzpicture}[xscale=.9,yscale=.9]

\begin{scope}[shift={(-7.5,1.1)}]

\draw[rounded corners,thick] (0,0) rectangle (.5,2) {};
\draw[rounded corners,thick] (1.25,0) rectangle (1.75,2) {};
\draw[rounded corners,thick] (2.5,0) rectangle (3,2) {};
\node at (.25,.25) {\scriptsize $n$};
\node at (.25,1.75) {\scriptsize $p$};
\node at (1.5,.25) {\scriptsize $n$};
\node at (1.5,1.75) {\scriptsize $p$};
\node at (2.75,.25) {\scriptsize $n$};
\node at (2.75,1.75) {\scriptsize $p$};
\node at (.25,1) {\small $1$};
\node at (1.5,1) {\small $2$};
\node at (2.75,1) {\small $3$};

\draw[blue] (.25,1.75) circle (.5cm);
\draw[blue] (1.5,1.75) ellipse (1.65cm and .34cm);
\draw[blue] (2.125,.25) ellipse (1cm and .28cm);

\end{scope}

\draw[rounded corners,thick] (-1,3.206) rectangle (1,3.706) {};
\node at (-.75,3.45) {\scriptsize $n$};
\node at (.75,3.45) {\scriptsize $p$};
\draw[rotate=30,rounded corners,thick] (-.25,1) rectangle (.25,3) {};
\node at (-1.375,2.382) {\scriptsize $p$};
\node at (1.375,2.382) {\scriptsize $n$};
\node at (-.625,1.083) {\scriptsize $n$};
\node at (.625,1.083) {\scriptsize $p$};
\draw[rotate=-30,rounded corners,thick] (-.25,1) rectangle (.25,3) {};
\node at (0,3.456) {\small $1$};
\node at (-1,1.733) {\small $2$};
\node at (1,1.733) {\small $3$};

\draw[blue] (0,1.083) ellipse (.97cm and .28cm);
\draw[rotate=-60,blue] (-2,2.382) ellipse (.97cm and .28cm);
\draw[rotate=60,blue] (2,2.382) ellipse (.97cm and .28cm);

\end{tikzpicture}
\end{center}

\noindent The picture on the left represents the subset of $\mathrm{Fn}(3,2)$ containing the three functions $\{(1,p)\}$, $\{(1,p),(2,p),(3,p)\}$, and $\{(2,n),(3,n)\}$. The picture on the right shows a full subset of $\mathrm{Fn}(3,2)$. (The collection of functions represented on the left is not full.) The reader may check that the family represented by the picture on the right fails to have property $(\dagger)_3$. Thus this picture shows $\tilde \Ser(3) > 3$, assuming that $\tilde \Ser(3)$ is well-defined.


\begin{theorem}
For each $n \in \N$, the properties $(*)_n$ and $(\dagger)_n$ are equivalent. Hence $\Ser(n)$ is well-defined if and only if $\tilde \Ser(n)$ is, and if they are both well-defined then $\Ser(n) = \tilde \Ser(n)$.
\end{theorem}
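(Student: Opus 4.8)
The plan is to prove, for each fixed $k$ and $n$, that property $(*)_n$ (read with this $k$) holds if and only if property $(\dagger)_n$ (read with this $k$) holds; the assertions about $\Ser(n)$ and $\tilde\Ser(n)$ are then immediate, since each is by definition the least $k$ making the corresponding property true (and ``not well-defined'' means no such $k$). Both implications are proved by a direct translation: from a collection of incompatible ideals one manufactures a full family, and from a full family one manufactures a collection of incompatible ideals, in such a way that a witness to one side's ``choosing'' phenomenon converts into a witness to the other side's ``consistency on $D$'' phenomenon.

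For $(\dagger)_n \Rightarrow (*)_n$: start with incompatible pairs $(\I_\ell,\J_\ell)$ on $X$ and fix, for each $\ell$, a set $A_\ell \in \I_\ell$ with $X\setminus A_\ell \in \J_\ell$ (these exist by incompatibility; note also $A_\ell\notin\J_\ell$ and $X\setminus A_\ell\notin\I_\ell$, since the ideals are proper). Pass to the common refinement of the partitions $\{A_\ell, X\setminus A_\ell\}$: its cells are the nonempty sets obtained by intersecting, over all $\ell$, either $A_\ell$ or $X\setminus A_\ell$. The reason for working with cells is that each cell lies inside $A_d$ or inside $X\setminus A_d$ for \emph{every} $d$, hence belongs to $\I_d\cup\J_d$ for every $d$. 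Since the (finitely many) cells contained in $A_\ell$ have union $A_\ell\notin\J_\ell$, one of them — call it $B_{\ell,p}$ — lies in $\I_\ell\setminus\J_\ell$; symmetrically pick a cell $B_{\ell,n}\sub X\setminus A_\ell$ in $\J_\ell\setminus\I_\ell$. To each cell $B_{\ell,s}$ attach the partial function on $\{1,\dots,k\}$ recording, for each pair that the cell actually chooses between, which of the two ideals it chooses; the resulting family $\F\sub\mathrm{Fn}(k,2)$ is full, because $B_{\ell,p}$ and $B_{\ell,n}$ witness the two values at coordinate $\ell$. Now apply $(\dagger)_n$ to get $\G\sub\F$ and $D$ with $|D|\ge n$ and no two members of $\G$ disagreeing on $D$, and let $A$ be the union of the finitely many cells attached to members of $\G$ (choosing for each member of $\G$ one such cell). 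For $d\in D$, consistency pins down a value $v_d$; every chosen cell then either chooses the $v_d$ side of pair $d$ or fails to choose for $d$ at all, and in the latter case — crucially, because it is a cell — it lies in $\I_d\cap\J_d$; either way it lies in the ``$v_d$'' ideal, so the finite union $A$ does too, while $A$ contains a cell lying outside the other ideal and hence, by downward closure of that ideal, $A$ also lies outside it. Thus $A$ chooses between at least $|D|\ge n$ pairs.

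For $(*)_n \Rightarrow (\dagger)_n$: start with a full $\F\sub\mathrm{Fn}(k,2)$, let $X = \F\times\N$, and for $f\in\F$ write $\base{f} = \{f\}\times\N$. For each $\ell$ set $P_\ell = \{f\in\F : f(\ell)=p\}$ and $N_\ell = \{f\in\F : f(\ell)=n\}$ (both nonempty, by fullness), and put $\I_\ell = \{A\sub X : A\cap\base{f}\text{ is finite for every } f\in N_\ell\}$ and $\J_\ell = \{A\sub X : A\cap\base{f}\text{ is finite for every } f\in P_\ell\}$. One checks these are proper ideals, and that they are incompatible, witnessed by $\bigcup_{f\in P_\ell}\base{f}$. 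Apply $(*)_n$ to obtain $A\sub X$ choosing between $\I_\ell$ and $\J_\ell$ for all $\ell$ in some $D$ with $|D|\ge n$, and let $\G = \{f\in\F : A\cap\base{f}\text{ is infinite}\}$. Membership of $A$ in $\I_\ell$ (resp.\ $\J_\ell$) says exactly that $\G$ is disjoint from $N_\ell$ (resp.\ from $P_\ell$), so ``$A$ chooses between $\I_\ell$ and $\J_\ell$'' unwinds to: some member of $\G$ is defined at $\ell$, and no two members of $\G$ take opposite values there. Hence $\G$ and $D$ witness $(\dagger)_n$.

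The step I expect to be the main obstacle is making the first translation actually deliver a set that \emph{chooses} — is in exactly one of the two ideals — rather than one that is in both or in neither. Naively taking the union of the pieces associated to $\G$ controls one side (downward closure keeps it out of the ``wrong'' ideal) but not the other: a piece that does not choose for $d$ might fail to lie in the ``right'' ideal as well, and then the union could fall outside both. The fix is to run the construction on cells of the common refinement, since a cell is automatically in $\I_d\cup\J_d$ for every $d$; together with the fact that the manufactured family is finite (so that a finite union of members of an ideal stays in that ideal), this is exactly what forces $A$ onto one side and off the other. The second translation is comparatively routine once the ideals $\I_\ell,\J_\ell$ are chosen so that ``$A$ chooses at $\ell$'' becomes, word for word, the consistency condition on $\G$ at $\ell$.
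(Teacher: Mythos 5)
Your proposal is correct and follows essentially the same route as the paper: your ``cells of the common refinement'' are exactly the paper's sets $A_\ell^f = \bigcap_m A_m^{f(m)} \cap A_\ell$, your observation that a cell lies in $\I_d \cup \J_d$ for every $d$ is the paper's notion of a \emph{tame} set, and the finite-union argument pinning $A$ into one ideal and out of the other is the same. The converse direction is the paper's construction verbatim (up to swapping the roles of $\I_\ell$ and $\J_\ell$, which is immaterial).
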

\begin{proof}
We prove first that $(\dagger)_n$ implies $(*)_n$. Fix $k \in \N$ and suppose that $(\dagger)_n$ holds. Let $X$ be a set and let $(\I_1,\J_1),(\I_2,\J_2),\dots,(\I_k,\J_k)$ be pairs of incompatible ideals on $X$. 

For each $A \sub X$, define a function $\phi_A \in \mathrm{Fn}(k,2)$ as follows. For every $\ell \in \{1,2,\dots,k\}$, let $\ell \in \mathrm{dom}(\phi_A)$ if and only if $A$ chooses between $\I_\ell$ and $\J_\ell$, and for all $\ell \in \mathrm{dom}(\phi_A)$, let
$$\phi_A(\ell) = 
\begin{cases}
p & \text{ if } A \in \I_\ell \setminus \J_\ell \\
n & \text{ if } A \in \J_\ell \setminus \I_\ell.
\end{cases}$$

Let us say that $A \sub X$ is \emph{tame} if for all $\ell \in \{1,2,\dots,k\}$, either $A \in \I_\ell$ or $A \in J_\ell$.
Then let 
$$\F = \set{\phi_A}{A \sub X \text{ is tame}}.$$
Clearly, $\F \sub \mathrm{Fn}(k,2)$. 

\begin{claim}
$\F$ is full.
\end{claim}
\begin{proof}[Proof of claim]
For every $m \in \{1,2,\dots,k\} \setminus \{\ell\}$, use the fact that $\I_m$ and $\J_m$ are incompatible to fix some $A_m \sub X$ such that $A_m \in \I_m$ and $\N \setminus A_m \in \J_m$. For convenience, let $A^p_m$ denote $A_m$, and let $A^n_m$ denote $\N \setminus A_m$.

Fix $\ell \in \{1,2,\dots,k\}$, and partition $A_\ell$ into $2^{k-1}$ sets as follows. For each function $f$ from $\{1,2,\dots,k\} \setminus \{\ell\}$ to $\{p,n\}$, define
$$A_\ell^f \,=\, \textstyle \bigcap \set{A^{f(m)}_m}{m \in \{1,2,\dots,k\} \setminus \{\ell\}} \cap A_\ell.$$
There are only finitely many functions from $\{1,2,\dots,k\} \setminus \{\ell\}$ to $\{p,n\}$, so (because $\J_\ell$ is closed under finite unions) if every $A_\ell^f$ were in $\J_\ell$ then $A_\ell = \bigcup_{f: \{1,2,\dots,k\} \setminus \{\ell\} \to \{p,n\}} A^f_\ell$ would also be in $\J_\ell$. But $A_\ell \notin \J_\ell$, because $\N \setminus A_\ell \in \J_\ell$. Thus there is some $f$ such that $A^f_\ell \notin \J_\ell$.
On the other hand, $A_\ell^f \in \I_\ell$ because $A^f_\ell \sub A_\ell \in \I_\ell$. Hence $A^f_\ell$ chooses between $\I_\ell$ and $\J_\ell$, and in fact $\phi_{A_\ell^f}(\ell) = p$.

For each $m \in \{1,2,\dots,k\} \setminus \{\ell\}$, either $A^f_\ell \sub A^p_m$, in which case $A^f_\ell \in \I_m$, or $A^f_\ell \sub A^n_m$, in which case $A^f_\ell \in \J_m$. From this and the fact that $A^f_\ell \in \I_\ell$, it follows that $A^f_\ell$ is tame. 

Thus we have found a tame $A \sub X$ with $\phi_{A}(\ell) = p$.
A similar argument (which proceeds by partitioning $\N \setminus A_\ell$ instead of $A_\ell$) shows that there is a tame $A \sub X$ with $\phi_{A}(\ell) = n$.
As $\ell \in \{1,2,\dots,k\}$ was arbitrary, we conclude that $\F$ is full. 
\end{proof}

Applying $(\dagger)_n$, there is some $\G \sub \F$ and some $D \sub \bigcup_{g \in \G}\mathrm{dom}(g)$ such that $|D| \geq n$, and no two members of $\G$ disagree on $D$. For each $g \in \G$, fix some particular tame subset $A_g$ of $X$ such that $\phi_{A_g} = g$. Let $A = \bigcup_{g \in \G}A_g$.
We claim that $A$ chooses between the ideals $\I_\ell$ and $\J_\ell$ for every $\ell \in D$. 
Fix $\ell \in D$, and suppose that $g(\ell) = p$ for every $g \in \G$ with $\ell \in \mathrm{dom}(g)$. Then $A_g \in \I_\ell \setminus \J_\ell$ for all $g \in \G$ with $\ell \in \mathrm{dom}(g)$. 
For $\ell \notin \mathrm{dom}(g)$, $A_g \in \I_\ell$ and $A_g \in \J_\ell$ (because $A_g$ is tame).
Hence $A_g \in \I_\ell$ for all $g \in \G$.
As $\I_\ell$ is closed under finite unions, this implies $A \in \I_\ell$. 
Because $\J_\ell$ is closed under taking subsets, if $A \in \J_\ell$ then $A_g \in \J_\ell$ for every $g \in \G$. But this is not the case, because $\ell \in \mathrm{dom}(g)$ for at least one $g \in \G$, in which case $g(\ell) = p$ and $A_g \notin \J_\ell$. Thus $A \in \I_\ell \setminus \J_\ell$. Similarly, if $g(\ell) = n$ for every $g \in \G$ with $\ell \in \mathrm{dom}(g)$, then $A \in \J_\ell \setminus \I_\ell$. Either way, $A$ chooses between $\I_\ell$ and $\J_\ell$. Because this is true for every $\ell \in D$, and because $|D| \geq n$, the set $A$ chooses between at least $n$ of the pairs $(\I_1,\J_1),(\I_2,\J_2),\dots,(\I_k,\J_k)$. This completes the proof that $(\dagger)_n$ implies $(*)_n$.

\vspace{3mm}

We now prove the converse direction, that $(*)_n$ implies $(\dagger)_n$. Fix $k \in \N$, and suppose that $(*)_n$ holds. Let $\F \sub \mathrm{Fn}(k,2)$ be full. 

Let $X = \F \times \N$
and for each $\ell \leq k$ define two ideals $\I_\ell$ and $\J_\ell$ as follows:
$$\I_\ell = \set{A \sub X}{A \cap (\{f\} \times \N) \text{ is finite for every } f \in \F \text{ with } f(\ell) = p},$$
$$\J_\ell = \set{A \sub X}{A \cap (\{f\} \times \N) \text{ is finite for every } f \in \F \text{ with } f(\ell) = n}.$$

Fix $\ell \leq k$. Because $\F$ is full, there is some $f \in \F$ with $f(\ell) = p$. Therefore $X \notin \I_\ell$. All the other parts of the definition of an ideal are easy to check for $\I_\ell$, so $\I_\ell$ is an ideal on $X$. Similarly, the fullness of $\F$ implies that $\J_\ell$ is an ideal on $X$. To see that $\I_\ell$ and $\J_\ell$ are incompatible, consider
$$A = \textstyle \bigcup \set{\{f\} \times \N}{f \in \F \text{ and }f(\ell) \neq p}.$$
It is clear that $A \in \I_\ell$, and that
\begin{align*}
\N \setminus A & \,=\, \textstyle \bigcup \set{\{f\} \times \N}{f \in \F \text{ and }f(\ell) = p} \\
& \,\sub\, \textstyle \bigcup \set{\{f\} \times \N}{f \in \F \text{ and }f(\ell) \neq n} \in \J_\ell.
\end{align*}
Thus $\I_\ell$ and $\J_\ell$ are incompatible for each $\ell \leq k$.

Applying $(*)_n$, there is some $A \sub X$ such that $A$ chooses between at least $n$ of the pairs $(\I_1,\J_1),(\I_2,\J_2),\dots,(\I_k,\J_k)$. Let
$$\G = \set{f \in \F}{A \cap (\{f\} \times \N) \text{ is infinite}},$$
$$D = \set{\ell \leq k}{A \text{ chooses between } \I_\ell \text{ and } \J_\ell}.$$
By our choice of $A$, we have $|D| \geq n$. Thus, to prove $(*)_n$, it suffices to show that $D \sub \bigcup_{g \in \G}\mathrm{dom}(g)$ and that  no two members of $\G$ disagree on $D$. 

If $\ell \in D$, then $A$ chooses between $\I_\ell$ and $\J_\ell$, and in particular, either $A \notin \I_\ell$ or $A \notin \J_\ell$. Either way, this implies $A \cap (\{f\} \times \N)$ is infinite for some $f \in \F$ with $\ell \in \mathrm{dom}(f)$. But then $f \in \G$, so this shows $\ell \in \bigcup_{g \in \G}\mathrm{dom}(g)$. As $\ell$ was an arbitrary member of $D$, we have $D \sub \bigcup_{g \in \G}\mathrm{dom}(g)$.

If $\ell \in D$, then $A$ chooses between $\I_\ell$ and $\J_\ell$, and in particular, either $A \in \I_\ell$ or $A \in \J_\ell$. Suppose for now that $A \in \I_\ell$. 
The definition of $\I_\ell$ implies that $A \cap (\{f\} \times \N)$ is finite for every $f \in \F$ with $f(\ell) = p$; but then the definition of $\G$ implies that $f \notin \G$. Hence if $A \in \I_\ell$, then $f(\ell) \neq p$ for all $g \in \G$ (by which we mean that if $g \in \G$ then either $g(\ell) = n$ or $\ell \notin \mathrm{dom}(g)$). Similarly, if $A \in \J_\ell$, then $g(\ell) \neq n$ for all $g \in \G$. Either way, no two members of $\G$ disagree at $\ell$. As $\ell$ was an arbitrary member of $D$, no two members of $\G$ disagree on $D$.
\end{proof}

\section{Partitions in hypergraphs: an upper bound for $\Ser(n)$}\label{sec:combinatorics1}

In this section we prove the main theorem by showing that $\Ser(n)$ is well-defined for all $n \in \N$, and obtain the upper bound
$\Ser(n+1) \leq \textstyle 1+\sum_{k = 1}^n \frac{n}{k}$, which implies
$$\Ser(n) \,<\, \textstyle n \ln n + \g n - \ln n +\frac{3}{2} -\g,$$
where $\g \approx .5772156649$ is the Euler-Mascheroni constant.

To show that $\Ser(n)$ is well-defined and prove this upper bound, we do not analyze $\Ser(n)$ directly. Instead we introduce another problem of extremal combinatorics, defining another function $\Hyp(n)$. This function has a simpler definition than $\Ser(n)$, and seems significantly easier to work with. We show that $\Ser(n+1) \leq \Hyp(n)+1$ for all $n$, so that any upper bounds one might find for $\Hyp(n)$ automatically give upper bounds for $\Ser(n)$ also.

Recall that a \emph{hypergraph} is a set $V$ (called \emph{vertices}) together with a collection $\HH$ of subsets of $V$ (called \emph{hyperedges}). A hypergraph $(V,\HH)$ is said to \emph{contain a partition of size $n$} if there is some $D \sub V$ and $\P \sub \HH$ such that $|D| = n$ and every member of $D$ is contained in exactly one member of $\P$. In this case, we say that $D$ and $\P$ form a partition in $(V,\HH)$.

\vspace{1mm}
\begin{center}
\begin{tikzpicture}[xscale=.85,yscale=.85]

\draw[fill=black] (2,-.28) circle (2pt);
\draw[fill=black] (.875,.02) circle (2pt);
\draw[fill=black] (.875,-.58) circle (2pt);

\begin{scope}[rotate=-20]
\draw[thick,blue] (0,0) ellipse (50pt and 20pt);
\draw[fill=black] (-.9,.05) circle (2pt);
\end{scope}
\begin{scope}[shift={(0,-.56)}]\begin{scope}[rotate=20]
\draw[thick,orange] (0,0) ellipse (50pt and 20pt);
\draw[fill=black] (-.9,.05) circle (2pt);
\end{scope}\end{scope}

\begin{scope}[xscale=-1,shift={(-1.75,0)}]
\draw[fill=black] (2,-.28) circle (2pt);
\begin{scope}[rotate=-20]
\draw[thick,green] (0,0) ellipse (50pt and 20pt);
\draw[fill=black] (-.9,.05) circle (2pt);
\end{scope}
\begin{scope}[shift={(0,-.56)}]\begin{scope}[rotate=20]
\draw[thick,magenta] (0,0) ellipse (50pt and 20pt);
\draw[fill=black] (-.9,.05) circle (2pt);
\end{scope}\end{scope}\end{scope}

\begin{scope}[shift={(8.3,0)}]

\draw[fill=black] (2,-.28) circle (2pt);

\begin{scope}[rotate=-20]
\draw[thick,blue] (0,0) ellipse (50pt and 20pt);
\draw[fill=black] (-.9,.05) circle (2pt);
\end{scope}
\begin{scope}[shift={(0,-.56)}]\begin{scope}[rotate=20]
\draw[thick,orange] (0,0) ellipse (50pt and 20pt);
\draw[fill=black] (-.9,.05) circle (2pt);
\end{scope}\end{scope}

\begin{scope}[xscale=-1,shift={(-1.75,0)}]
\begin{scope}[shift={(0,-.56)}]\begin{scope}[rotate=20]
\draw[thick,magenta] (0,0) ellipse (50pt and 20pt);
\draw[fill=black] (-.9,.05) circle (2pt);
\end{scope}\end{scope}\end{scope}

\end{scope}


\end{tikzpicture}
\end{center}
\vspace{1mm}

\noindent For example, the picture on the left above shows a hypergraph with $8$ vertices and $4$ hyperedges. The picture on the right shows that it contains a partition of size $4$. 

Given a hypergraph $(V,\HH)$, a vertex $v \in V$ is called \emph{isolated} if $v \notin \bigcup \HH$.

Given a hypergraph, we will be interested in the problem of finding in it a partition involving as many vertices as possible. For example, in the hypergraph above we found a partition of size $4$, and one may check that this is the largest possible: i.e., there is no partition of size $\geq\! 5$.

\begin{definition}\label{def:sunflower}
Define $\Hyp(n)$ to be the greatest $k \in \N$ such that there is a hypergraph $(V,\HH)$ with $|V| = k$ having no isolated vertices and containing no partitions of size greater than $n$. If there is no such $k$, then we say that $\Hyp(n)$ is not well-defined.
\end{definition}

\noindent For example, because the hypergraph pictured above does not contain any partitions of size $>\!4$, it shows that if $\Hyp(4)$ is well-defined, then $\Hyp(4) \geq 8$.

\begin{theorem}\label{thm:Ser}
Let $n \in \N$. If $\F \sub \mathrm{Fn}(k,2)$ is full and $k \geq \Hyp(n)$, then $\F$ satisfies property $(\dagger)_{n+1}$. Consequently, if $\Hyp(n)$ is well-defined, then so is $\Ser(n+1)$, and 
$\Ser(n+1) \,\leq\, \Hyp(n)+1$.
\end{theorem}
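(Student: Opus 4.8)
The plan is to pass from $\F$ to the hypergraph whose hyperedges are the \emph{domains} of the members of $\F$, and then to read off property $(\dagger)_{n+1}$ almost directly from the definition of $\Hyp(n)$. Concretely, given a full $\F \sub \mathrm{Fn}(k,2)$, I would put $V = \{1,2,\dots,k\}$ and $\HH = \set{\mathrm{dom}(f)}{f \in \F}$. Since $\F$ is full, every $i \in V$ lies in $\mathrm{dom}(f)$ for some $f \in \F$ (even for one with $f(i) = p$), so the hypergraph $(V,\HH)$ has no isolated vertices. Because $|V| = k$ is larger than $\Hyp(n)$ — here I use the hypothesis in the form $k \geq \Hyp(n)+1$, which is also what produces the ``$+1$'' in the stated bound — the maximality in the definition of $\Hyp(n)$ forces $(V,\HH)$ to contain a partition of size greater than $n$: there are $D \sub V$ with $|D| \geq n+1$ and $\P \sub \HH$ such that every $\ell \in D$ lies in exactly one member of $\P$.

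Next I would translate this partition back into the language of $(\dagger)_{n+1}$. For each $E \in \P$ with $E \cap D \neq \0$, choose some $g_E \in \F$ with $\mathrm{dom}(g_E) = E$ (possible as $E \in \HH$), and set $\G = \set{g_E}{E \in \P \text{ and } E \cap D \neq \0}$. Then $D \sub \bigcup_{g \in \G}\mathrm{dom}(g)$, since each $\ell \in D$ lies in its unique member $E_\ell \in \P$ and hence in $\mathrm{dom}(g_{E_\ell})$. Moreover, no two members of $\G$ disagree on $D$; in fact no $\ell \in D$ even lies in the domains of two \emph{distinct} members of $\G$, because if $g_E, g_{E'} \in \G$ and $\ell \in \mathrm{dom}(g_E) \cap \mathrm{dom}(g_{E'}) \cap D = E \cap E' \cap D$, then $E$ and $E'$ are members of $\P$ both containing $\ell$, so $E = E'$ and $g_E = g_{E'}$. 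Since $|D| \geq n+1$, the pair $(\G,D)$ witnesses $(\dagger)_{n+1}$.

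Finally I would assemble the consequences. The argument shows that every full $\F \sub \mathrm{Fn}(k,2)$ with $k \geq \Hyp(n)+1$ satisfies $(\dagger)_{n+1}$; hence, if $\Hyp(n)$ is well-defined, then $\tilde\Ser(n+1)$ is well-defined and $\tilde\Ser(n+1) \leq \Hyp(n)+1$. By the theorem of the previous section identifying $(*)_m$ with $(\dagger)_m$, it follows that $\Ser(n+1)$ is well-defined and $\Ser(n+1) = \tilde\Ser(n+1) \leq \Hyp(n)+1$.

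The only real content here is the choice of hypergraph, and that is the step I expect to be the genuine sticking point (everything after it is bookkeeping). With the hypergraph of domains, the ``partition'' hypothesis does all the work: choosing one function per chosen domain makes each element of $D$ belong to the domain of a \emph{single} member of $\G$, so the ``no two members disagree'' clause of $(\dagger)$ holds vacuously. The tempting alternative of taking hyperedges to be the value-sets $f^{-1}(p)$ and $f^{-1}(n)$ is actually \emph{worse}: a single hyperedge can arise as $f^{-1}(p)$ for one $f$ and as $(f')^{-1}(n)$ for another, so a partition no longer pins down a consistent assignment of values and one has to struggle to recover one. Resisting that detour is the main thing to get right.
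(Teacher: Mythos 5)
Your proof is correct and follows essentially the same route as the paper's: form the hypergraph on $\{1,\dots,k\}$ whose hyperedges are the domains of members of $\F$, use fullness to rule out isolated vertices, extract a partition of size greater than $n$ from $k \geq \Hyp(n)+1$, and pick one function per chosen domain so that the no-disagreement clause of $(\dagger)_{n+1}$ holds vacuously. You even correctly read the hypothesis as $k \geq \Hyp(n)+1$ (the theorem statement's ``$k \geq \Hyp(n)$'' is an off-by-one slip; the paper's own proof also assumes $k \geq \Hyp(n)+1$, which is what the bound $\Ser(n+1) \leq \Hyp(n)+1$ requires).
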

\begin{proof}
Let $\F \sub \mathrm{Fn}(k,2)$ be full, and suppose that $k \geq \Hyp(n)+1$. Define a hypergraph by setting $V = \{1,2,\dots,k\}$ and
$$\HH = \set{\mathrm{dom}(f)}{f \in \F}.$$
Because $\F$ is full, $\bigcup \HH = \{1,2,\dots,k\}$. In other words, the hypergraph $(V,\HH)$ has no isolated points. Because $k > \Hyp(n)$, $(V,\HH)$ contains a partition of size greater than $n$. Fix some $\mathcal E \sub \HH$ and some $D \sub V$ with $|D| > n$ such that 
every member of $D$ is contained in exactly one member of $\mathcal E$.

For each $D \in \mathcal E$, choose a function $f_G \in \G$ with $\mathrm{dom}(f_G) = G$. Let
$\G = \set{f_G}{G \in \mathcal E}$. Then $D \sub \bigcup \mathcal E = \bigcup_{f \in \G}\mathrm{dom}(f)$, but no two members of $\G$ contain a common member of $D$. Hence all the functions in $\G$ agree on $D$, and as $|D| \geq n+1$, it follows that $\G$ satisfies property $(\dagger)_{n+1}$.
\end{proof}

To prove our paper's main theorem, it remains now to show that $\Hyp(n)$ is well-defined for every $n$. 
In fact, this is relatively easy: the more difficult part of this section is to prove an upper bound with leading term $n \log n$. 
The well-definedness of $\Hyp(n)$ can be proved in a paragraph or two by using the {sunflower lemma} of Erd\H{o}s and Rado \cite{Erdos&Rado}, or by using the {Sauer-Shelah lemma} \cite{Sauer,Shelah}. (Shelah attributes the lemma to Perles; it was proved independently, and possibly earlier, by Vapnik and \v{C}ervonenkis \cite{VC}). The proof we give presently is neither of these, however. While slightly longer, it has the advantage of being elementary and entirely self-contained. This proof of the well-definedness of $\Hyp(n)$ and $I(n)$ gives quadratic polynomials for their upper bounds.

If $(V,\HH)$ has no isolated points, but $(V,\HH \setminus \{E\})$ does have isolated points for any $E \in \HH$, then $(V,\HH)$ is called \emph{economical}. 

\begin{lemma}\label{lem:econ}
Suppose $(V,\HH)$ is a hypergraph with no isolated points. If $\HH$ is finite, then there is some $\HH' \sub \HH$ such that $(V,\HH')$ is economical and has no isolated points. 
\end{lemma}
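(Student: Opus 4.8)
The statement to prove is Lemma~\ref{lem:econ}: given a hypergraph $(V,\HH)$ with $\HH$ finite and no isolated points, there is $\HH' \sub \HH$ with $(V,\HH')$ economical and having no isolated points.

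The plan is to run a greedy/minimality argument on the collection of hyperedges. Since $\HH$ is finite, I would consider the family of all $\HH'' \sub \HH$ such that $(V,\HH'')$ still has no isolated points; this family is nonempty because $\HH$ itself belongs to it. Among all such $\HH''$, choose one, call it $\HH'$, of minimum cardinality (or, equivalently, minimal with respect to inclusion — finiteness guarantees such a minimal element exists). I claim $(V,\HH')$ is economical. Indeed, suppose not: then there is some $E \in \HH'$ such that $(V,\HH' \setminus \{E\})$ still has no isolated points. But $\HH' \setminus \{E\} \sub \HH$ is then a member of our family with strictly smaller cardinality than $\HH'$, contradicting minimality. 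Hence no such $E$ exists, which is exactly the statement that $(V,\HH')$ is economical. Since $\HH'$ was chosen in the family, $(V,\HH')$ has no isolated points, completing the proof.

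There is essentially no obstacle here: the lemma is a routine extremal-principle argument, and the only thing to be careful about is that the family over which we minimize is nonempty (witnessed by $\HH$) and that finiteness of $\HH$ guarantees a minimum is attained. One could phrase the induction on $|\HH|$ instead — if $(V,\HH)$ is already economical, take $\HH' = \HH$; otherwise delete a hyperedge $E$ whose removal leaves no isolated points and apply the inductive hypothesis to $(V,\HH\setminus\{E\})$ — but the minimality phrasing is cleaner and avoids an explicit induction. I expect the author's proof to be one of these two equivalent one-paragraph arguments.
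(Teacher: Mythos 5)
Your proof is correct and is essentially the same argument as the paper's: the paper deletes hyperedges one by one until no further deletion is possible (terminating by finiteness of $\HH$), which is just the iterative phrasing of your minimality argument. Both versions are valid and interchangeable.
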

\begin{proof}
Delete hyperedges from $\HH$, one by one, as long as it is possible to delete a hyperedge without creating any isolated points. When it is no longer possible to do so (which must be the case after finitely many steps), we have found $\HH'$.
\end{proof}

\begin{theorem}\label{thm:quadbound}
Suppose $(V,\HH)$ is a hypergraph without isolated points, and $|V| > n^2$. Then $(V,\HH)$ contains a partition of size greater than $n$.
\end{theorem}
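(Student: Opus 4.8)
The plan is to prove the contrapositive-flavored statement directly: build up a large partition one element at a time, and show that as long as we haven't reached a partition of size $n+1$, there is a comparatively small set of "blocked" vertices, so that with $|V| > n^2$ vertices there is always room to extend. Concretely, I would run a greedy process. Start with $D = \emptyset$ and $\mathcal E = \emptyset$. At each stage, we have a partial partition: a set $D$ of vertices together with a set $\mathcal E$ of hyperedges such that each vertex of $D$ lies in exactly one member of $\mathcal E$. Call a vertex $v \in V \setminus D$ \emph{available} if there is a hyperedge $E \in \HH$ with $v \in E$ and such that adding $E$ to $\mathcal E$ (if $E \notin \mathcal E$) and $v$ to $D$ keeps the "exactly one" condition — that is, $v$ belongs to exactly one member of $\mathcal E \cup \{E\}$, and no vertex already in $D$ is disturbed. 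If at some stage some vertex is available, extend the partition; otherwise stop. The claim will be that if the process stops with $|D| = m$, then $|V| \le$ (something like) $m^2$ or $m(n+1)$-ish, forcing $m > n$ once $|V| > n^2$.

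The key step is the counting estimate at a stuck stage. Suppose the process has halted with partition $(D,\mathcal E)$, $|D| = m \le n$. I want to bound $|V|$. Every vertex $v \notin D$ must fail to be available. Since $(V,\HH)$ has no isolated points, $v$ lies in some hyperedge $E$. The failure of availability means every hyperedge containing $v$ either (i) already meets $D$ in such a way that it cannot be used to cover $v$ cleanly — i.e., some hyperedge through $v$ that is in $\mathcal E$ would double-cover $v$ — or (ii) any hyperedge $E \ni v$ not in $\mathcal E$ contains some vertex of $D$ (so adding $E$ to $\mathcal E$ would destroy that vertex's "exactly one" status). I would organize the bookkeeping so that each non-$D$ vertex $v$ gets "charged" to a pair consisting of a vertex $d \in D$ and one of the (at most $|\mathcal E| \le m$) hyperedges in $\mathcal E$, or more simply to a pair of elements of $D$: roughly, $v$ is blocked because every edge through it is either in $\mathcal E$ (and there are $\le m$ of those, each contributing boundedly) or meets $D$. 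The crude version of this gives each blocked vertex a witness in $D \times \mathcal E$ or $D \times D$, hence $|V \setminus D| \le m^2$ (or $m \cdot |\mathcal E| \le m^2$), so $|V| \le m + m^2$; since the theorem only needs $|V| > n^2$ this slack is fine, but I should be a little careful to land on exactly $|V| \le n^2$ when $m \le n$ — tuning the definition of "available" (e.g., requiring the new edge to be \emph{disjoint} from $D$, which matches the partition condition more tightly) should make the count come out as $|V \setminus D| \le m^2 - m$ or similar, giving $|V| \le n^2$.

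The main obstacle I anticipate is getting the charging argument to be both correct and tight enough: the two failure modes (an edge through $v$ lying in $\mathcal E$ versus an unused edge through $v$ hitting $D$) need to be handled uniformly, and I need to make sure that in mode (ii) the vertex of $D$ that $v$'s edge hits, together with which block of $\mathcal E$ that vertex sits in, gives an injective-enough assignment. A clean way to force injectivity is to strengthen the invariant: insist that $\mathcal E$ be a genuine partition of $D$ by \emph{pairwise disjoint} hyperedges and, when extending, always pick for the new vertex $v$ a hyperedge $E$ with $E \cap D = \emptyset$ and $v \in E$ (so $E$ becomes a new block $\{$its intersection with the eventual $D\}$). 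Then a stuck vertex $v$ has the property that \emph{every} hyperedge containing it meets $D$; picking one such edge and one vertex $d \in E \cap D$ charges $v$ to $d$, and a vertex $d$ can be charged by at most... — here I'd need an extra idea, perhaps a second greedy pass or a degree argument, to bound how many $v$'s map to a single $d$ by roughly $|D|$, yielding $|V \setminus D| \le |D|^2$. Combined with $|D| \le n$ this gives $|V| \le n + n^2$; to sharpen to $|V| \le n^2$ I would either absorb the linear term by a slightly smarter stopping rule or simply note that the weaker bound $|V| > n^2 + n$ is all that later applications need — but since the statement as written asks for $n^2$, I'll aim to recover the missing $n$ by observing that the $n$ vertices of $D$ itself are among those being counted, i.e., the map can be taken on all of $V$ (not just $V \setminus D$) into $D \times D$, giving $|V| \le |D|^2 \le n^2$, contradiction.
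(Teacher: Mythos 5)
Your greedy-plus-charging strategy has two genuine gaps, one of which you flag yourself and neither of which is repaired by the fixes you sketch. First, the counting step fails: when the process halts with $|D| = m$, a blocked vertex $v$ is charged to some $d \in E \cap D$ for an edge $E \ni v$, but a single $d \in D$ may lie in arbitrarily many hyperedges, each containing up to $n$ further vertices, so the number of $v$'s charged to $d$ is not bounded by $|D|$ or by anything depending only on $m$ and $n$. Your parenthetical ``here I'd need an extra idea'' is exactly where the proof is missing, and no degree bound is available at that point. Second, and more structurally, a greedy process that only ever adds to $D$ and $\mathcal E$ can halt at a tiny partial partition even though arbitrarily large partitions exist. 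Take $V = \{d\} \cup \{w_1,\dots,w_N\}$ with hyperedges $\{d,w_i\}$: if the greedy ever places $d$ into $D$ (which your availability rule permits at the second step), then every remaining $w_i$ is permanently blocked, since its only edge would double-cover $d$; yet $\{w_1,\dots,w_k\}$ together with $\P = \{\{d,w_i\} : i \leq k\}$ is a partition of size $k$ for every $k$. So ``the greedy is stuck'' does not imply ``no large partition exists,'' and the whole contrapositive structure collapses unless you work with a maximum partition or allow deleting vertices from $D$ --- neither of which your proposal does.

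The idea that makes the theorem easy is to select the hyperedges first rather than the vertices. After disposing of the case where some edge has more than $n$ vertices (such an edge is by itself a partition of size $>n$), pass to a subfamily $\HH' \sub \HH$ that still covers $V$ but is minimal with this property (the paper's ``economical'' hypergraph; it exists by discarding redundant edges one at a time, after first reducing to finite $V$ by restricting to any finite $W \sub V$ with $|W| > n^2$). Minimality gives each $E \in \HH'$ a private vertex $v_E$ lying in no other member of $\HH'$, so $\set{v_E}{E \in \HH'}$ and $\HH'$ already form a partition of size $|\HH'|$; and since $\HH'$ covers $V$ with edges of size at most $n$, we get $|\HH'| \geq |V|/n > n$. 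This replaces your vertex-by-vertex extension and the problematic charging argument in one stroke.
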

\begin{proof}
If any $E \in \HH$ contains more than $n$ vertices, then taking $X = E$ and $\P = \{E\}$ shows that $(V,\HH)$ contains a partition of size $>\!n$.
So let us suppose that each $E \in \HH$ contains at most $n$ vertices. 

Suppose $V$ is finite. This implies $\HH$ is also finite (of size at most $2^{|V|}$). By the previous lemma, there is some $\HH' \sub \HH$ such that $(V,\HH')$ is economical. Because $\HH'$ is economical, we may for every $E \in \HH'$ find some $v_E \in V$ such that $v_E$ is isolated in $(V,\HH' \setminus \{E\})$. Then $\set{v_E}{E \in \HH'}$ and $\HH'$ form a partition in $(V,\HH)$ of size $|\HH'|$. As $(V,\HH)$ contains no isolated points, and $|V| > n^2$, and every $E \in \HH'$ contains at most $n$ vertices, we have $|\HH'| > n$. This finishes the proof for the case that $V$ is finite.

Now suppose $V$ is infinite. Pick some finite $W \sub V$ with $|W| > n^2$, and let $\HH_W = \set{W \cap E}{E \in \HH}$. By the previous paragraph, $(W,\HH_W)$ contains a partition of size $>\!n$, i.e., there is some $D \sub W$ with $|D| > n$ and some $\G \sub \HH_W$ such that each member of $D$ is contained in exactly one member of $\G$. For each $G \in \G$, choose some $E_G \in \HH$ such that $E_G \cap D = G$. Then $D$ and $\set{E_G}{G \in \G}$ form a partition of size $|D| > n$ in $(V,\HH)$.
\end{proof}

\begin{theorem}
$H(n)$ and $I(n)$ are well-defined for all $n \in \N$. Furthermore, $H(n) \leq n^2$ and $I(n) \leq n^2-2n+2$ for all $n$.
\end{theorem}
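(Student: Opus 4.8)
The plan is to deduce the theorem almost immediately from Theorems~\ref{thm:quadbound} and~\ref{thm:Ser}, with only a little bookkeeping. First I would dispense with $\Hyp(n)$. The set of $k \in \N$ admissible in Definition~\ref{def:sunflower} is nonempty: for $n \geq 1$ one may take the hypergraph with a single vertex $v$ and the single hyperedge $\{v\}$, which has no isolated points and, having only one vertex, contains no partition of size $> n$; and for $n = 0$ the empty hypergraph works. On the other hand, Theorem~\ref{thm:quadbound} says that a hypergraph without isolated points and with more than $n^2$ vertices must contain a partition of size $> n$, so every hypergraph of the kind described in Definition~\ref{def:sunflower} has at most $n^2$ vertices. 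Thus the admissible $k$'s form a nonempty subset of $\N$ bounded above by $n^2$; it therefore has a greatest element, which means $\Hyp(n)$ is well-defined and $\Hyp(n) \leq n^2$.

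Next I would invoke Theorem~\ref{thm:Ser}. Since we have just shown that $\Hyp(n)$ is well-defined for every $n$, that theorem gives, for each $n$, that $\Ser(n+1)$ is well-defined with $\Ser(n+1) \leq \Hyp(n) + 1 \leq n^2 + 1$. Re-indexing with $m = n+1$ turns this into $\Ser(m) \leq (m-1)^2 + 1 = m^2 - 2m + 2$, which is the asserted bound; the base case $m = 1$ is covered either by the convention $\Hyp(0) = 0$ or, more concretely, by noting that for a single incompatible pair $(\I,\J)$ on $X$ any $A$ with $A \in \I$ and $X \setminus A \in \J$ must have $A \notin \J$ (else $X \in \J$), hence chooses between $\I$ and $\J$; so $\Ser(1) = 1 = 1^2 - 2 + 2$.

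I do not expect any real obstacle: the substantive work is already contained in Theorems~\ref{thm:quadbound} and~\ref{thm:Ser}, and what is left is to observe that a nonempty bounded set of naturals has a maximum and to perform the index shift. The only points that deserve a second look are checking that the family of hypergraphs in Definition~\ref{def:sunflower} is genuinely nonempty (otherwise ``greatest $k$'' would be vacuous) and the trivial verification of the base case for $\Ser$.
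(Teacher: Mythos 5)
Your proof is correct and follows essentially the same route as the paper: derive the well-definedness and bound for $\Hyp(n)$ from Theorem~\ref{thm:quadbound}, pass to $\Ser(n+1)\leq \Hyp(n)+1$ via Theorem~\ref{thm:Ser}, and handle $\Ser(1)=1$ separately. You merely spell out two details the paper leaves implicit (nonemptiness of the set of admissible $k$ in Definition~\ref{def:sunflower}, and the verification that $\Ser(1)=1$), both of which check out.
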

\begin{proof}
From Theorems~\ref{thm:Ser} and \ref{thm:quadbound} it follows that $H(n)$ is well-defined and $H(n) \leq n^2$ for all $n$, and that $I(n)$ is well-defined and 
$$I(n) \leq H(n-1)+1 \leq (n-1)^2+1 = n^2-2n+2$$ 
for all $n \geq 2$. To finish the proof, all that remains is an easy observation: $I(1)$ is well-defined, and $I(1) = 1$.
\end{proof}

Now that we know $\Ser(n)$ and $\Hyp(n)$ are well-defined, we proceed to sharpen our upper bound on their values. As we will see in the following section, the next theorem gives the right growth rate for $\Ser(n)$ and $\Hyp(n)$, up to a constant factor.

\begin{lemma}\label{lem:econ2}
$\Hyp(n)$ is equal to the greatest $k \in \N$ such that there is an economical hypergraph $(V,\HH)$ with $|V| = k$ containing no partitions of size greater than $n$.
\end{lemma}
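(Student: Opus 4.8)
The plan is to show the two quantities coincide by proving each is at most the other. Call the quantity defined via economical hypergraphs $\Hyp'(n)$. One inequality is immediate: every economical hypergraph with no isolated points is in particular a hypergraph with no isolated points, so any economical witness for $\Hyp'(n)$ is also a witness for $\Hyp(n)$, giving $\Hyp'(n) \le \Hyp(n)$. (I should double-check that economical hypergraphs automatically have no isolated points: by definition $(V,\HH)$ economical means $(V,\HH)$ has no isolated points and deleting any hyperedge creates one, so yes, ``economical'' already includes ``no isolated points''.)

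For the reverse inequality $\Hyp(n) \le \Hyp'(n)$, I would take a hypergraph $(V,\HH)$ witnessing $\Hyp(n)$, i.e.\ $|V| = \Hyp(n)$, no isolated points, and no partition of size $> n$. The goal is to produce an economical hypergraph on the \emph{same} vertex set $V$ that still contains no partition of size $> n$; then $|V| = \Hyp(n)$ shows $\Hyp'(n) \ge \Hyp(n)$. The natural move is to pass to a subfamily $\HH' \subseteq \HH$ making $(V,\HH')$ economical. If $\HH$ is finite, Lemma~\ref{lem:econ} does exactly this. Since passing to a subfamily of hyperedges can only shrink the collection of available partitions, $(V,\HH')$ still has no partition of size $> n$, and it has no isolated points, so it is an economical witness for $\Hyp'(n)$ on $\Hyp(n)$ vertices.

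The one genuine obstacle is the possibility that $\HH$ is infinite, in which case Lemma~\ref{lem:econ} does not directly apply and the greedy deletion process need not terminate. Here I would use the finiteness of $V$: the vertex set has only $|V| = \Hyp(n)$ elements, so the map $E \mapsto E$ induces at most $2^{|V|}$ distinct hyperedges, and we may first replace $\HH$ by the (finite) set of \emph{distinct} subsets of $V$ appearing in $\HH$ without changing $\bigcup\HH$ or the collection of partitions. Once $\HH$ is finite we are back in the situation handled by Lemma~\ref{lem:econ}. (Alternatively, one could simply note that in Definition~\ref{def:sunflower} the witness may as well be taken finite to begin with, since $|V| = k$ is finite; I would prefer to spell out the ``pass to distinct hyperedges'' step so the argument is self-contained.)

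In summary the proof is short: $\Hyp'(n) \le \Hyp(n)$ because economical hypergraphs are a special case, and $\Hyp(n) \le \Hyp'(n)$ because any witness for $\Hyp(n)$ can be thinned---first to finitely many distinct hyperedges, then via Lemma~\ref{lem:econ}---to an economical hypergraph on the same vertex set without creating partitions of size $>n$. The only place requiring a moment's care is handling an a priori infinite $\HH$, and that is dispatched by the finiteness of $V$.
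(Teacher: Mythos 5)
Your proposal is correct and follows essentially the same route as the paper: one direction is immediate since economical hypergraphs have no isolated points, and the other thins a witness for $\Hyp(n)$ via Lemma~\ref{lem:econ}, observing that partitions in the sub-hypergraph are partitions in the original. Your extra care about the finiteness of $\HH$ is handled in the paper by the observation (made in the proof of Theorem~\ref{thm:quadbound}) that a finite vertex set forces $|\HH| \leq 2^{|V|}$.
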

\begin{proof}
By definition, there is a hypergraph $(V,\HH_0)$ with $|V| = \Hyp(n)$ containing no isolated points, and containing no partitions of size greater than $n$. By Lemma~\ref{lem:econ}, there is some $\HH \sub \HH_0$ such that $(V,\HH)$ is economical. If $X$ and $\P$ form a partition in $(V,\HH)$, then they also form a partition in $(V,\HH_0)$; hence $(V,\HH)$ contains no partitions of size greater than $n$. Thus the number $k$ described in the lemma is $\geq\! \Hyp(n)$. The reverse inequality follows immediately from the definition of $\Hyp(n)$.
\end{proof}

\begin{theorem}\label{thm:upper}
$\Hyp(n) \leq \sum_{k = 1}^n \frac{n}{k}$ for all $n \in \N$.
\end{theorem}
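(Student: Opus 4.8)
The plan is to prove this by induction on $n$, exploiting the economical hypergraphs provided by Lemma~\ref{lem:econ2}. Fix an economical hypergraph $(V,\HH)$ with no isolated points, with $|V| = \Hyp(n)$, containing no partition of size greater than $n$. Because the hypergraph is economical, for each $E \in \HH$ we may pick a witness vertex $v_E$ that is isolated in $(V, \HH \setminus \{E\})$; the map $E \mapsto v_E$ is injective, and $\{v_E : E \in \HH\}$ together with $\HH$ itself would form a partition were it not for the size constraint — so in particular $|\HH| \le n$. This already recovers the quadratic bound once we note each $E$ has at most $n$ vertices; to do better we need a more careful accounting, which is where the induction comes in.

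The key step is to find a single hyperedge $E_0 \in \HH$ that is large, delete it, and control what happens to the rest. Here is the idea: since $|\HH| \le n$ and the hyperedges cover all $|V| = \Hyp(n)$ vertices, some hyperedge $E_0$ has $|E_0| \ge \Hyp(n)/n$. Now consider the hypergraph $(V \setminus E_0, \HH')$ where $\HH' = \{ E \setminus E_0 : E \in \HH \setminus \{E_0\}, \ E \not\subseteq E_0\}$, after discarding any hyperedges that become empty. First I would argue this new hypergraph has no isolated points on its vertex set $V \setminus E_0$: a vertex $v \notin E_0$ lies in some $E \in \HH$ with $E \ne E_0$, and $v \in E \setminus E_0$, so $v$ is covered. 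Second, and this is the crucial claim, $(V \setminus E_0, \HH')$ contains no partition of size greater than $n-1$. For if $D' \subseteq V \setminus E_0$ with $|D'| \ge n$ is partitioned by hyperedges $E_1 \setminus E_0, \dots, E_m \setminus E_0$, then I claim $D' \cup \{v_{E_0}\}$ is partitioned in the \emph{original} hypergraph by $E_0, E_1, \dots, E_m$: the vertex $v_{E_0}$ lies only in $E_0$ among all of $\HH$ (it is isolated after deleting $E_0$), so in particular it lies in none of $E_1, \dots, E_m$ and in $E_0$ uniquely; and each $v \in D'$ lies in exactly one of $E_1 \setminus E_0, \dots, E_m \setminus E_0$, hence (since $v \notin E_0$) in exactly one of $E_1, \dots, E_m$, and not in $E_0$. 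That gives a partition of size $\ge n+1$ in $(V,\HH)$, a contradiction. Hence by Lemma~\ref{lem:econ2} and the induction hypothesis, $|V \setminus E_0| = \Hyp(n) - |E_0| \le \Hyp(n-1) \le \sum_{k=1}^{n-1} \frac{n-1}{k}$.

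Combining the two inequalities, $\Hyp(n) - \Hyp(n)/n \le \Hyp(n-1)$, so $\Hyp(n) \le \frac{n}{n-1}\Hyp(n-1) \le \frac{n}{n-1}\sum_{k=1}^{n-1}\frac{n-1}{k} = \sum_{k=1}^{n-1}\frac{n}{k}$, and adding the $k=n$ term gives $\Hyp(n) \le \sum_{k=1}^{n}\frac{n}{k}$ — actually slightly better, but this form suffices and matches the claimed bound. The base case $n=1$ is immediate: a hypergraph with no isolated points and no partition of size $>1$ can have at most one vertex, since any two vertices covered by hyperedges yield a partition of size $2$ (take one vertex from each of two distinct covering edges, or note a single edge with two vertices already forces things), so $\Hyp(1) = 1 = \sum_{k=1}^1 \frac{1}{k}$.

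The main obstacle I anticipate is getting the bookkeeping around $E_0$ exactly right — specifically, making sure the reduced hypergraph genuinely has no isolated points (one must be careful that every surviving hyperedge is nonempty after removing $E_0$, and that the discarded ones, namely those $E \subseteq E_0$, don't orphan any vertex outside $E_0$, which they can't since such $E$ has no vertices outside $E_0$) and that the lifting of a partition back to $(V,\HH)$ really does use $v_{E_0}$ correctly. The other subtlety is the choice of which large hyperedge to call $E_0$: the averaging argument needs $|\HH| \le n$, which holds because the economical structure forces $|\HH| \le $ (size of the largest partition) $\le n$. If the averaging gives a non-integer threshold one simply uses $|E_0| \ge \lceil \Hyp(n)/n \rceil$, which only helps. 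I'd also double-check the degenerate case $\HH = \{E_0\}$, where $V \setminus E_0 = \emptyset$ and the induction step reads $\Hyp(n) = |E_0| \le n \le \sum_{k=1}^n \frac nk$ directly.
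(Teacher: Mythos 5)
The reduction step itself is sound: in an economical hypergraph $(V,\HH)$ with no partition of size greater than $n$, deleting a hyperedge $E_0$ together with its vertices does yield a hypergraph with no isolated points and no partition of size greater than $n-1$ (your lifting argument via the witness $v_{E_0}$ is correct), so $\Hyp(n) \leq \Hyp(n-1) + |E_0|$. The fatal error is in how you combine this with the averaging bound. You have $\Hyp(n) - |E_0| \leq \Hyp(n-1)$ and $|E_0| \geq \Hyp(n)/n$; substituting the second into the first only tells you that $\Hyp(n) - |E_0| \leq \Hyp(n) - \Hyp(n)/n$, i.e.\ it bounds the \emph{same side} of the first inequality from above, so you cannot conclude $\Hyp(n) - \Hyp(n)/n \leq \Hyp(n-1)$. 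To make your recursion work you would need an edge with $|E_0| \leq \Hyp(n)/n$ — an upper bound on some edge's size — and averaging gives you no such edge. What your correct inequality $\Hyp(n) \leq \Hyp(n-1) + |E_0|$ actually yields, using $|E_0| \leq n$ (a single edge is itself a partition), is $\Hyp(n) \leq \Hyp(n-1) + n \leq n(n+1)/2$: a quadratic bound, essentially Theorem~\ref{thm:quadbound} again.

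The conclusion of your recursion is in fact false, not merely unproven: it gives $\Hyp(n) \leq \sum_{k=1}^{n-1} \frac{n}{k}$, which for $n=2$ says $\Hyp(2) \leq 2$ while $\Hyp(2)=3$, and for $n=4$ says $\Hyp(4) \leq \frac{22}{3} < 8 = \Hyp(4)$ (Theorem~\ref{thm:six}). Concretely, for $V=\{1,2,3\}$ and $\HH=\{\{1,2\},\{2,3\}\}$ (economical, no partition of size $3$), the largest edge has size $2 > |V|/2$ and your combined inequality would read $3 - \frac{3}{2} \leq \Hyp(1) = 1$. The paper's proof is of a different character: rather than deleting one edge, it considers all $\binom{n}{j}$ ways of deleting $j$ edges simultaneously, extracts from each a linear inequality in the degree-class sizes $|D_k|$, and takes a weighted combination of these inequalities (weights $1/\binom{n-1}{j}$) that collapses, via the hockey stick identity, to $|V| = \sum_k |D_k| \leq \sum_{k=1}^n \frac{n}{k}$. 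Some such global accounting over many deletions seems necessary; a single greedy deletion cannot beat the quadratic bound.
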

\begin{proof}
Fix $n \in \N$, and let $(V,\HH)$ be an economical hypergraph containing no partition of size greater than $n$. To prove the theorem, it suffices (by the previous lemma) to show $|V| \leq \sum_{k=1}^n \frac{n}{k}$.

Recall that a vertex $v \in V$ has \emph{degree} $d$ in $(V,\HH)$ if $\card{\set{E \in \HH}{v \in E}} = d$. For each $k \leq n$, define
$$D_k = \set{v \in V}{v \text{ has degree } k}.$$
More generally, if $\HH' \sub \HH$ then define
$$D_k^{\HH'} = \set{v \in V}{v \text{ has degree } k \text{ in } (V,\HH')}.$$

Observe that $D_1$ and $\HH$ form a partition in $(V,\HH)$. This implies $|D_1| \leq n$. Because $(V,\HH)$ is economical, there is an injection from $\HH$ to $V$. (For example, any injection that maps each $E \in \HH$ to some $v_E$ that is isolated in $(V,\HH \setminus \{E\})$.) Hence $|\HH| \leq |D_1| \leq n$. It follows that $(V,\HH)$ has no vertices of degree greater than $n$. As $(V,\HH)$ also has no isolated points,
$$V \,=\, \bigcup_{k = 1}^n D_k \qquad \text{ and } \qquad |V| = \sum_{k = 1}^n \card{D_k}.$$
For each $k \leq n$, define $m_k = k|D_k|$.

Fix $j$ with $0 \leq j < n$. In the next part of the proof, our goal will be to produce an inequality, labeled $(\mathrm{Ineq}_j)$ below, that constrains the values of the $m_k$ for $k \leq j+1$.

Let $\ell = n - |\HH|$ and let $F_1,F_2,\dots,F_\ell$ be any $\ell$ distinct sets that are not in $\HH$. Recall that $|\HH| \leq n$, so that $\ell \geq 0$. The $F_i$ will be used as dummy variables below. (We think of the $F_i$ as ``fake hyperedges'' whose purpose is to allow us to pretend that $|\HH| = n$, even if really $|\HH| < n$. In what follows, one gets the right idea by thinking of each $F_i$ as an empty edge.) 
Let
$$\mathbf{Delete}_j \,=\, \set{\A }{ \A \sub \HH \cup \set{F_i}{i \leq \ell} \text{ and } \card{\A} = n-j}.$$
Note that $\card{\mathbf{Delete}_j} = {n \choose j}$.

Roughly, our idea for obtaining an inequality constraining $m_1,\dots,m_{j+1}$ is as follows. Each $\A \in \mathbf{Delete}_j$ gives rise to a subset $\A \cap \HH$ of $\HH$. Observing that $\A \cap \HH$ and $D_1^{\A \cap \HH}$ form a partition in $(V,\HH)$, this means that $D_1^{\A \cap \HH}$ must have size $\leq\! n$ for each $\A$. Summing over all $\A \in \mathbf{Delete}_j$ will give the desired inequality.

Fix $k \leq n$ and $v \in D_k = D_k^{\HH}$. Given any $\HH' \sub \HH$, note that $v \in D_1^{\HH'}$ if and only if 
$\card{\set{E \in \HH'}{v \in E}} = 1,$
and this is the case if and only if
$$\card{\set{E \in \HH \setminus \HH'}{v \in E}} = k-1.$$
Therefore, given $\A \in \mathbf{Delete}_j$, $v \in D_1^{\A \cap \HH}$ if and only if the $j$ members of $(\HH \cup \set{F_i}{i \leq \ell}) \setminus \A$ consist of exactly $k-1$ members of the $k$-element set $\set{E \in \HH}{v \in E}$, plus any $j - (k-1)$ other members of the $n$-element set $\HH \cup \set{F_i}{i \leq \ell}$.
Hence, defining
$$\mathbf{S}_{v} \,=\, \set{ \A \in \mathbf{Delete}_j }{v \in D_1^{\A \cap \HH}},$$
we have 
$$\card{\mathbf{S}_v} \,=\, {k \choose k-1}{n-k \choose j - (k-1)} \,=\, k{n-k \choose j-k+1}$$
whenever $k \leq j+1$, and $\card{\mathbf{S}_v} = 0$ whenever $k > j+1$.
Note that $\card{\mathbf{S}_v}$ does not depend on $v$, but only on the degree $k$ of $v$ and on $j$.

By varying $k$ and $v$, and summing over all $\A \in \mathbf{Delete}_j$, we obtain
\begin{align*}
\sum_{\A \in \mathbf{Delete}_j} \card{D_1^{\HH \cap \A}} & \,=\, \sum_{v \in V} \card{\mathbf{S}_v} \,=\, \sum_{k = 1}^{n} \, \sum_{v \in D_k} \card{\mathbf{S}_v} \,=\, \sum_{k = 1}^{j+1} \, \sum_{v \in D_k} \card{\mathbf{S}_v} \\
& \,=\, \sum_{k=1}^{j+1} \, \sum_{v \in D_k} k{n-k \choose j-k+1} 
 \,=\, \sum_{k=1}^{j+1} \, \card{D_k} k{n-k \choose j-k+1} \\
& \,=\, \sum_{k=1}^{j+1} \, m_k{n-k \choose j-k+1}.
\end{align*}

Recall that for any $\HH' \sub \HH$, $D_1^{\HH'}$ and $\HH'$ form a partition in $(V,\HH)$. This implies $\card{D_1^{\HH'}} \leq n$ for all $\HH' \sub \HH$, and so
$$\sum_{\A \in \mathbf{Delete}_j} \card{D_1^{\HH \cap \A}} \,\leq\, \sum_{\A \in \mathbf{Delete}_j} \!\! n \,=\, n\card{\mathbf{Delete}_j} \,=\, n{n \choose j}.$$
Putting these observations together, we arrive at what we were aiming for, namely an inequality that constrains the $m_k$ for $k \leq j+1$:
\begin{equation}
\sum_{k=1}^{j+1} \, m_j{n-k \choose j-k+1} \,\leq\, n{n \choose j}.
\tag{Ineq$_j$}
\end{equation}

The next step in our proof is to take a positive linear combination of the inequalities $(\mathrm{Ineq}_0),(\mathrm{Ineq}_1),\dots,(\mathrm{Ineq}_{n-1})$ in order to obtain a single inequality. The coefficients for this linear combination come from taking the reciprocals of row $n-1$ of Pascal's triangle. That is, by taking the linear combination
$$\frac{1}{{n-1 \choose 0}}(\mathrm{Ineq}_0) + \frac{1}{{n-1 \choose 1}}(\mathrm{Ineq}_1) + \frac{1}{{n-1 \choose 2}}(\mathrm{Ineq}_2) + \dots + \frac{1}{{n-1 \choose n-1}}(\mathrm{Ineq}_{n-1}),$$
we arrive at a new inequality combining all the $(\mathrm{Ineq}_j)$:
\begin{equation}
\sum_{j = 0}^{n-1} \, \frac{\sum_{k=1}^{j+1} \, m_k{n-k \choose j-k+1} }{{n-1 \choose j}} \ \leq \ \sum_{j = 0}^{n-1} \frac{n{n \choose j}}{{n-1 \choose j}}
\tag{$\star$}
\end{equation}

While it is far from obvious at this point, we shall see that $(\star)$ simplifies to the inequality claimed in the statement of the theorem.
The following claim shows how to simplify the left-hand side of $(*)$.

\begin{claim} $\displaystyle \sum_{j = 0}^{n-1} \, \frac{\sum_{k=1}^{j+1} \, m_k{n-k \choose j-k+1} }{{n-1 \choose j}} \ = \ \sum_{k = 1}^n \frac{nm_k}{k}$.
\end{claim}
\begin{proof}[Proof of Claim]
Using a Fubini-like trick to rearrange the sum on the left, we obtain
\begin{align*}
\sum_{j = 0}^{n-1} \, \frac{\sum_{k=1}^{j+1} \, m_k{n-k \choose j-k+1} }{{n-1 \choose j}} & \,=\, \sum_{j = 0}^{n-1} \, \sum_{k=1}^{j+1} \, m_k \frac{ {n-k \choose j-k+1} }{{n-1 \choose j}} \,=\, \sum_{0 \,<\, k \,\leq\, j+1 \,\leq\, n} m_k \frac{ {n-k \choose j-k+1} }{{n-1 \choose j}} \\
& \,=\, \sum_{k=1}^n \, \sum_{j = k-1}^{n-1} m_k \frac{ {n-k \choose j-k+1} }{{n-1 \choose j}} \,=\, \sum_{k=1}^n \, m_k \sum_{j = k-1}^{n-1} \frac{ {n-k \choose j-k+1} }{{n-1 \choose j}}
\end{align*}
Thus, to prove the claim, it suffices to show that
\begin{equation}
\sum_{j = k-1}^{n-1} \frac{ {n-k \choose j-k+1} }{{n-1 \choose j}} \ = \ \frac{n}{k}
\tag{$\ddagger$}
\end{equation}
whenever $1 \leq k \leq n$. 

To see this, first recall that
$${m \choose r}{r \choose s} \,=\, {m \choose s}{m-s \choose r-s} \qquad \text{ for all } m,r,s \in \N \text{ with } s \leq r \leq m.$$
(See, e.g., \cite[Eq. 5.21]{GKP}.) Setting $m=n-1$, $r=j$, $s=k-1$ gives
$${n-1 \choose j}{j \choose k-1} \,=\, {n-1 \choose k-1}{n-k \choose j-k+1},$$ 
or equivalently
$$\frac{ {n-k \choose j-k+1} }{{n-1 \choose j}} \,=\, \frac{ {j \choose k-1} }{{n-1 \choose k-1}}.$$


\noindent Substituting this into the left-hand side of $(\ddagger)$ yields
$$\sum_{j = k-1}^{n-1} \frac{ {n-k \choose j-k+1} }{{n-1 \choose j}} \,=\, \sum_{j = k-1}^{n-1} \frac{ {j \choose k-1} }{{n-1 \choose k-1}} \,=\, \frac{1}{{n-1 \choose k-1}} \sum_{j = k-1}^{n-1} {j \choose k-1}.$$
Now recall the well-known \emph{hockey stick identity} \cite{Jones}, also sometimes known as the \emph{Christmas stocking identity}, which states that
$$\sum_{j=r}^m {j \choose r} \,=\, {m+1 \choose r+1} \qquad \text{ for all } m,r \in \N \text{ with } r \leq m.$$
Plugging in $r = k-1$ and $m = n-1$, we get
$$\sum_{j = k-1}^{n-1} {j \choose k-1} \,=\, {n \choose k},$$
and combining this with our earlier observations gives
$$\sum_{j = k-1}^{n-1} \frac{ {n-k \choose j-k+1} }{{n-1 \choose j}} \,=\, \frac{1}{{n-1 \choose k-1}} \sum_{j = k-1}^{n-1} {j \choose k-1} \,=\, \frac{ {n \choose k} }{ {n-1 \choose k-1} } \,=\, \frac{n}{k},$$
which proves $(\ddagger)$ and finishes the proof of the claim.
\end{proof}

Returning to the inequality $(\star)$, and applying the preceding claim, we obtain
$$\sum_{k = 1}^n \frac{nm_k}{k} \,\leq\, \sum_{j = 0}^{n-1} \frac{n{n \choose j}}{{n-1 \choose j}}.$$
To simplify the right-hand side, observe that
$$\sum_{j = 0}^{n-1} \frac{n{n \choose j}}{{n-1 \choose j}} \,=\, n \sum_{j=0}^{n-1} \frac{{n \choose j}}{{n-1 \choose j}} \,=\, n \sum_{j=0}^{n-1} \frac{\frac{n!}{(n-j)!j!}}{\quad \frac{(n-1)!}{(n-j-1)!j!}\quad} \,=\, n \sum_{j = 0}^{n-1} \frac{n}{n-j}.$$
Substituting $k = n-j$ and reversing the order of the summation,
$$n \sum_{j = 0}^{n-1} \frac{n}{n-j} \,=\, n \sum_{k=1}^n \frac{n}{k}.$$
Hence
$$\sum_{k = 1}^n \frac{nm_k}{k} \,\leq\, n \sum_{k=1}^n \frac{n}{k}$$
and dividing both sides by $n$ gives
$$\sum_{k = 1}^n \frac{m_k}{k} \,\leq\, \sum_{k=1}^n \frac{n}{k}.$$
But recall the definition of $m_k$, namely $m_k = k\card{D_k}$, where $D_k$ denotes the number of vertices in $(V,\HH)$ of degree $k$. As every vertex has degree at least $1$ (because there are no isolated points) and at most $n$ (because $|\HH| \leq n$),
$$|V| \,=\, \sum_{k = 1}^n \card{D_k} \,=\, \sum_{k = 1}^n \frac{m_k}{k} \,\leq\, \sum_{k=1}^n \frac{n}{k}$$
as claimed.
\end{proof}

The following corollary simply applies well-known results and techniques to rephrase the conclusion of the previous theorem in a way that underscores the asymptotic growth rates of $\Hyp(n)$ and $\Ser(n)$.

\begin{corollary}\label{cor:ubSer}
For every $n \in \N$,
$$\textstyle \Hyp(n) \,<\, n \ln n + \g n + \frac{1}{2},$$
$$\Ser(n) \,<\, \textstyle n \ln n + \g n - \ln n +\frac{3}{2} -\g,$$
where $\g \approx .5772156649$ is the Euler-Mascheroni constant.
\end{corollary}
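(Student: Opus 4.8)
The plan is to derive Corollary~\ref{cor:ubSer} from Theorem~\ref{thm:upper} purely by estimating the harmonic-type sum $\sum_{k=1}^n \frac{n}{k} = n H_n$, where $H_n = \sum_{k=1}^n \frac 1k$ is the $n$-th harmonic number. The standard estimate here is $H_n = \ln n + \g + \frac{1}{2n} - \frac{1}{12n^2} + \cdots$, so in particular $H_n < \ln n + \g + \frac{1}{2n}$ for all $n \geq 1$; this is classical (it follows, e.g., from comparing the sum to $\int$ of $1/x$ together with an Euler--Maclaurin remainder bound, or from the monotonicity of $H_n - \ln n - \frac{1}{2n}$). Multiplying through by $n$ gives $n H_n < n \ln n + \g n + \frac 12$, which combined with Theorem~\ref{thm:upper} yields the first displayed inequality $\Hyp(n) < n\ln n + \g n + \frac 12$ immediately.

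For the second inequality, the plan is to invoke Theorem~\ref{thm:Ser}, which gives $\Ser(n+1) \leq \Hyp(n) + 1$, equivalently $\Ser(n) \leq \Hyp(n-1) + 1$ for $n \geq 2$. Substituting the bound just obtained for $\Hyp(n-1)$ gives
$$\Ser(n) \,\leq\, (n-1)\ln(n-1) + \g(n-1) + \tfrac 12 + 1 \,=\, (n-1)\ln(n-1) + \g n - \g + \tfrac 32.$$
It then remains to check the elementary inequality $(n-1)\ln(n-1) \leq n\ln n - \ln n$, i.e. $(n-1)\ln(n-1) \leq (n-1)\ln n$, which is immediate since $\ln$ is increasing. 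This yields $\Ser(n) < n\ln n + \g n - \ln n + \frac 32 - \g$ for all $n \geq 2$, and the case $n = 1$ is trivial since $\Ser(1) = 1$ and the right-hand side at $n=1$ is $1 + \frac 32 - \g = \frac 52 - \g > 1$.

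There is really no serious obstacle here; the corollary is a routine repackaging of Theorems~\ref{thm:upper} and~\ref{thm:Ser} together with textbook facts about harmonic numbers, and the only mild care needed is in tracking the $n$ versus $n-1$ shift coming from $\Ser(n+1) \leq \Hyp(n)+1$ and in verifying that the crude bound $(n-1)\ln(n-1) \le (n-1)\ln n$ suffices to absorb the error terms into the stated closed form. One could instead quote $H_n < \ln n + \g + \frac{1}{2n}$ directly from a standard reference (e.g., \cite{GKP}) and skip its derivation entirely.
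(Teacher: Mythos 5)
Your proof is correct and follows essentially the same route as the paper's: quote $H_n < \ln n + \g + \frac{1}{2n}$, multiply by $n$ to bound $\Hyp(n)$, then apply $\Ser(n) \leq \Hyp(n-1)+1$ and the monotonicity bound $(n-1)\ln(n-1) \leq (n-1)\ln n$. (One trivial slip: at $n=1$ the right-hand side of the second inequality evaluates to $\frac{3}{2}$, not $\frac{5}{2}-\g$, but either value exceeds $\Ser(1)=1$, so the check goes through.)
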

\begin{proof}
Let $H_n = 1+\frac{1}{2}+\frac{1}{3}+\dots+\frac{1}{n}$ denote the $n^{\mathrm{th}}$ harmonic number. Bounds for $H_n$ are given in \cite{Conway&Guy,Havil} (among other places), namely 
$H_n \,<\, \ln n + \g + \frac{1}{2n}$.
The first assertion of the corollary follows immediately from this and the previous theorem:
$$\textstyle \Hyp(n) \,\leq\, nH_n \,<\, n \ln n + \g n + \frac{1}{2}.$$

For the second assertion, Theorem~\ref{thm:Ser} and the previous paragraph combine to give
\begin{align*}
\Ser(n) & \textstyle \,\leq\, n\Hyp(n-1)+1 \,<\, (n-1) \ln (n-1) + \g (n-1) + \frac{3}{2} \\
& \textstyle \,\leq\, (n-1) \ln n + \g n - \g + \frac{3}{2} \,=\, n \ln n + \g n - \ln n +\frac{3}{2} - \g,
\end{align*}
which finishes the proof.
\end{proof}

The bound $(n-1) \ln (n-1) + \g (n-1) + \frac{3}{2} \leq n \ln n + \g n - \ln n +\frac{3}{2} - \g$ used in the proof has optimal coefficients on the right-hand side for the first three terms, but the constant term is asymptotically too large by $1$. With a little more work one can obtain
$$\textstyle \Ser(n) \,<\, (n-1) \ln (n-1) + \g (n-1) + \frac{3}{2} \,\leq\, n \ln n + \g n - \ln n + \frac{1}{2} - \g + \frac{1}{2n-1}$$
for all $n \in \N$.

\section{Lower bounds for $\Ser(n)$ and $\Hyp(n)$}\label{sec:combinatorics2}

We now move on to the task of finding a lower bound for $\Ser(n)$.

\begin{definition}\label{def:Serbounding}
Given $n,k \in \N$, a subset $\F$ of $\mathrm{Fn}(k,2)$ is called \emph{$\Ser(n)$-bounding} if it is full, and if for every $\G \sub \F$ and $D \sub \bigcup_{g \in \G}\mathrm{dom}(g)$ with $|D| \geq n$, there are two functions in $\G$ that disagree at a point of $D$.
\end{definition}

Observe that, for all $n \in \N$, 
\begin{align*}
\Ser(n) & = \min \set{k}{\text{there is no } \Ser(n)\text{-bounding family of size } \!\geq\! k} \\
& = \textstyle \max \set{\card{\bigcup \F}}{\F \text{ is }\Ser(n)\text{-bounding}} + 1
\end{align*}

\begin{lemma}\label{lem:lbSer}
Define an infinite sequence $k_1, k_2, k_3, \dots$ of natural numbers via the following recurrence relation:
$$k_1 = 1 \qquad \text{ and } \qquad k_n \,=\, \left \lfloor \frac{n}{2} \right \rfloor + k_{\left \lfloor \frac{n}{2} \right \rfloor} + k_{\left \lfloor \frac{n+1}{2} \right \rfloor}.$$
For every $n$, there is an $\Ser(n+1)$-bounding full subset of $\mathrm{Fn}(k_n,2)$.
\end{lemma}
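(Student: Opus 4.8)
The plan is to construct the required $\Ser(n+1)$-bounding family by recursion on $n$, mirroring the recurrence defining $k_n$. For the base case $n = 1$, the family $\F = \{\{(1,p)\}, \{(1,n)\}\}$ in $\mathrm{Fn}(1,2)$ is full, and it is $\Ser(2)$-bounding: any $\G \subseteq \F$ with a witnessing $D$ of size $\geq 2$ is impossible since $\bigcup_{g \in \G} \mathrm{dom}(g) \subseteq \{1\}$ has only one element, so the condition in Definition~\ref{def:Serbounding} is vacuous. (More generally one should record that an $\Ser(n)$-bounding family exists trivially whenever $k < n$, which will cover small cases inside the recursion.)

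For the recursive step, set $a = \lfloor n/2 \rfloor$ and $b = \lceil n/2 \rceil$, so $a + b = n$ and $k_n = a + k_a + k_b$. By the induction hypothesis, fix an $\Ser(a+1)$-bounding full family $\F_0 \subseteq \mathrm{Fn}(k_a, 2)$ and an $\Ser(b+1)$-bounding full family $\F_1 \subseteq \mathrm{Fn}(k_b, 2)$. I would partition the coordinate set $\{1, \dots, k_n\}$ into three blocks: a block $C$ of size $a$ (the ``fresh'' coordinates), a block $B_0$ of size $k_a$ carrying a relabeled copy of $\F_0$, and a block $B_1$ of size $k_b$ carrying a relabeled copy of $\F_1$. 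The family $\F$ on $\mathrm{Fn}(k_n, 2)$ is then built so that each $f \in \F$ restricted to $B_0$ is (a relabeling of) a member of $\F_0$, restricted to $B_1$ is a member of $\F_1$, and restricted to $C$ is a single, fixed constant value (say all $p$) except for a small set of ``corrector'' functions added purely to make $\F$ full on the coordinates of $C$ — for each $c \in C$ we throw in one function whose domain is $\{c\}$ and value $n$. Fullness of $\F$ on $B_0 \cup B_1$ is inherited from fullness of $\F_0, \F_1$; fullness on $C$ is guaranteed by the constant-$p$ functions together with these correctors.

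The heart of the argument is verifying the $\Ser(n+1)$-bounding property: given $\G \subseteq \F$ and $D \subseteq \bigcup_{g \in \G}\mathrm{dom}(g)$ with $|D| \geq n+1$ and no two members of $\G$ disagreeing on $D$, I must derive a contradiction. Split $D$ as $D_C \sqcup D_0 \sqcup D_1$ according to the three blocks. Since the only functions with $C$-coordinates outside the constant pattern are the correctors (whose domain is a singleton in $C$), an agreement condition forces $|D_C|$ to be small — at most $1$, essentially, because two correctors on distinct $c, c' \in C$ carry no common coordinate but the constant-$p$ bulk functions do, and one checks the agreement constraint caps $D_C$. Meanwhile, $\G$ restricted to $B_0$ yields a subfamily $\G_0$ of $\F_0$ with $D_0 \subseteq \bigcup_{g \in \G_0}\mathrm{dom}(g)$ and no disagreement on $D_0$, so the $\Ser(a+1)$-bounding property of $\F_0$ gives $|D_0| \leq a$; symmetrically $|D_1| \leq b$. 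Then $|D| = |D_C| + |D_0| + |D_1| \leq 1 + a + b = n + 1$ is too weak by itself — so the delicate point, and what I expect to be the main obstacle, is arranging the constant pattern on $C$ and the correctors so cleverly that in fact $|D_C| = 0$ is forced whenever $D_0$ and $D_1$ are already near-maximal (or, dually, that using a coordinate of $C$ in $D$ strictly drops the available budget in $B_0$ or $B_1$ by linking the choice of corrector to which member of $\F_0$ or $\F_1$ a function projects to). Getting this bookkeeping exactly right — so the three contributions sum to at most $n$ rather than $n+1$ — is the crux; once it is in place, $|D| \leq n$ contradicts $|D| \geq n+1$ and the lemma follows.
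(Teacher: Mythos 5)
There is a genuine gap, and it sits exactly where you say the crux is --- but the issue is not just that the bookkeeping is unfinished; the construction as you describe it is not $\Ser(n+1)$-bounding. If every bulk function takes the \emph{same} constant value $p$ on all of $C$, then the fresh block is wasted: any agreeing subfamily of bulk functions gets all of $C$ into $D$ for free. Already at $n=2$ (so $a=b=1$, $k_2=3$) your family contains a single function $g=\{(1,p),(2,p),(3,p)\}$ with $|\mathrm{dom}(g)|=3$, and $\G=\{g\}$, $D=\{1,2,3\}$ violates the $\Ser(3)$-bounding property; omitting the correctors from $\G$ means they cannot help. Even granting $|D_C|\leq 1$, the bound $|D|\leq 1+a+b=n+1$ does not contradict $|D|\geq n+1$, so the induction cannot close as set up.

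The paper's construction makes precisely the two changes needed to realize the ``linking'' you gesture at. First, each function's domain meets only \emph{one} of $B_0,B_1$, never both: the functions are indexed by maximal chains of a recursively built rooted tree $T_n$ whose bottom $\lfloor n/2\rfloor$ vertices are your block $C$ and whose two subtrees are copies of $T_{\lfloor n/2\rfloor}$ and $T_{\lfloor (n+1)/2\rfloor}$. Second, the constant value a function takes on $C$ is determined by \emph{which} subtree its chain enters, the two subtrees being assigned opposite labels (a top-of-chain tweak $f_P^{+}/f_P^{-}$ handles fullness at the leaves, replacing your correctors). The verification then rests on a combinatorial claim proved by induction: any $D$ with $|D|>n$ either puts more than $\lfloor n/2\rfloor$ points into one subtree or more than $\lfloor (n+1)/2\rfloor$ into the other (recurse), or else meets $C$ and both subtrees, in which case a function covering the $B_0$-point and a function covering the $B_1$-point both contain the $C$-point in their domains and disagree there. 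That is how the budget drops from $n+1$ to $n$; without an analogous mechanism your argument does not go through.
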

\begin{proof}
We begin the proof by constructing a sequence $T_1,T_2,T_3,\dots$ of rooted trees. Afterward, these trees will be used to construct the desired families of functions.
Recall that every rooted tree comes equipped with a natural partial order: $v \leq w$ if and only if the unique path from the root to $w$ contains $u$. In what follows, we move freely between the notion of a rooted tree as a particular type of pointed graph, and the notion of a rooted tree as a particular type of partial order.

The construction of the $T_n$ is by recursion. To begin, let $T_1$ be the rooted tree with exactly one vertex. Given $T_1,T_2,\dots,T_{n-1}$, the rooted tree $T_n$ is defined so that
\begin{itemize}
\item[$\circ$] the bottom of $T_n$ consists of $\left \lfloor \frac{n}{2} \right \rfloor$ linearly ordered vertices, the bottommost one being the root of $T_n$.
\item[$\circ$] the topmost of these $\left \lfloor \frac{n}{2} \right \rfloor$ linearly ordered vertices has two vertices immediately above it; one is the root of an isomorphic copy of $T_{\left \lfloor \frac{n}{2} \right \rfloor}$, and the other is the root of an isomorphic copy of $T_{\left \lfloor \frac{n+1}{2} \right \rfloor}$.
\end{itemize}

\begin{center}
\begin{tikzpicture}[xscale=.24,yscale=.24]

\node at (7,-47) {\Large $T_n$};
\draw[fill=black] (16,-52) circle (3pt);
\draw[fill=black] (16,-50.5) circle (3pt);
\draw (16,-50) -- (16,-49.75);
\node at (16,-48.25) {\small $\vdots$};
\draw[fill=black] (16,-46.5) circle (3pt);
\draw[fill=black] (16,-45) circle (3pt);
\draw (16,-47.25) -- (16,-45);
\draw (16,-45) -- (10,-41.5);
\draw (16,-45) -- (22,-41.5);
\draw [decorate,decoration={brace,amplitude=10pt,mirror,raise=4pt},yshift=0pt] (16,-52) -- (16,-45.1) 
node [black,midway,xshift=1.6cm] {\small $\left\lfloor \frac{n}{2} \right\rfloor$ vertices};

\draw[fill=black] (10,-41.5) circle (3pt);
\draw[fill=black] (22,-41.5) circle (3pt);
\draw (10,-41.5) -- (6,-34);
\draw (10,-41.5) -- (14,-34);
\draw (22,-41.5) -- (18,-34);
\draw (22,-41.5) -- (26,-34);
\node at (10,-36) { $T_{\left\lfloor \frac{n}{2} \right\rfloor}$};
\node at (22,-36) { $T_{\left\lfloor \frac{n+1}{2} \right\rfloor}$};

\end{tikzpicture}
\end{center}


\vspace{-2mm}
\begin{center}
\begin{tikzpicture}[xscale=.27,yscale=.32]

\draw[fill=black] (0,0) circle (2.5pt);
\node at (0,-1.5) {\small $T_1$};

\draw[fill=black] (4,0) circle (3pt);
\draw[fill=black] (3,1.5) circle (3pt);
\draw[fill=black] (5,1.5) circle (3pt);
\draw (4,0) -- (3,1.5);
\draw (4,0) -- (5,1.5);
\node at (4,-1.5) {\small $T_2$};

\draw[fill=black] (10,0) circle (3pt);
\draw[fill=black] (11,1.5) circle (3pt);
\draw[fill=black] (10,3) circle (3pt);
\draw[fill=black] (12,3) circle (3pt);
\draw[fill=black] (8,3) circle (3pt);
\draw (10,0) -- (11,1.5);
\draw (10,0) -- (8,3);
\draw (11,1.5) -- (10,3);
\draw (11,1.5) -- (12,3);
\node at (10,-1.5) {\small $T_3$};

\draw[fill=black] (18,0) circle (3pt);
\draw[fill=black] (18,1.5) circle (3pt);
\draw[fill=black] (16,3) circle (3pt);
\draw[fill=black] (20,3) circle (3pt);
\draw[fill=black] (15,4.5) circle (3pt);
\draw[fill=black] (17,4.5) circle (3pt);
\draw[fill=black] (19,4.5) circle (3pt);
\draw[fill=black] (21,4.5) circle (3pt);
\draw (18,0) -- (18,1.5);
\draw (18,1.5) -- (16,3);
\draw (18,1.5) -- (20,3);
\draw (16,3) -- (15,4.5);
\draw (16,3) -- (17,4.5);
\draw (20,3) -- (19,4.5);
\draw (20,3) -- (21,4.5);
\node at (18,-1.5) {\small $T_4$};

\draw[fill=black] (28,0) circle (3pt);
\draw[fill=black] (28,1.5) circle (3pt);
\draw[fill=black] (30,3) circle (3pt);
\draw[fill=black] (25,4.5) circle (3pt);
\draw[fill=black] (31,4.5) circle (3pt);
\draw[fill=black] (24,6) circle (3pt);
\draw[fill=black] (26,6) circle (3pt);
\draw[fill=black] (28,6) circle (3pt);
\draw[fill=black] (30,6) circle (3pt);
\draw[fill=black] (32,6) circle (3pt);
\draw (28,0) -- (28,1.5);
\draw (28,1.5) -- (30,3);
\draw (28,1.5) -- (25,4.5);
\draw (25,4.5) -- (24,6);
\draw (25,4.5) -- (26,6);
\draw (30,3) -- (31,4.5);
\draw (30,3) -- (28,6);
\draw (31,4.5) -- (30,6);
\draw (31,4.5) -- (32,6);
\node at (28,-1.5) {\small $T_5$};

\begin{scope}[shift={(33,14)}]

\draw[fill=black] (7,-14) circle (3pt);
\draw[fill=black] (7,-12.5) circle (3pt);
\draw[fill=black] (7,-11) circle (3pt);
\draw[fill=black] (4,-9.5) circle (3pt);
\draw[fill=black] (10,-9.5) circle (3pt);
\draw[fill=black] (5,-8) circle (3pt);
\draw[fill=black] (11,-8) circle (3pt);
\draw[fill=black] (2,-6.5) circle (3pt);
\draw[fill=black] (4,-6.5) circle (3pt);
\draw[fill=black] (6,-6.5) circle (3pt);
\draw[fill=black] (8,-6.5) circle (3pt);
\draw[fill=black] (10,-6.5) circle (3pt);
\draw[fill=black] (12,-6.5) circle (3pt);
\draw (7,-14) -- (7,-12.5);
\draw (7,-12.5) -- (7,-11);
\draw (7,-11) -- (4,-9.5);
\draw (7,-11) -- (10,-9.5);
\draw (4,-9.5) -- (5,-8);
\draw (10,-9.5) -- (11,-8);
\draw (4,-9.5) -- (2,-6.5);
\draw (10,-9.5) -- (8,-6.5);
\draw (5,-8) -- (4,-6.5);
\draw (5,-8) -- (6,-6.5);
\draw (11,-8) -- (10,-6.5);
\draw (11,-8) -- (12,-6.5);
\node at (7,-15.5) {\small $T_6$};

\end{scope}

\begin{scope}[shift={(-19,-2)}]
\draw[fill=black] (24,-14) circle (3pt);
\draw[fill=black] (24,-12.5) circle (3pt);
\draw[fill=black] (24,-11) circle (3pt);
\draw[fill=black] (27,-9.5) circle (3pt);
\draw[fill=black] (20,-8) circle (3pt);
\draw[fill=black] (21,-6.5) circle (3pt);
\draw[fill=black] (27,-8) circle (3pt);
\draw[fill=black] (25,-6.5) circle (3pt);
\draw[fill=black] (29,-6.5) circle (3pt);
\draw[fill=black] (18,-5) circle (3pt);
\draw[fill=black] (20,-5) circle (3pt);
\draw[fill=black] (22,-5) circle (3pt);
\draw[fill=black] (24,-5) circle (3pt);
\draw[fill=black] (26,-5) circle (3pt);
\draw[fill=black] (28,-5) circle (3pt);
\draw[fill=black] (30,-5) circle (3pt);
\draw (24,-14) -- (24,-12.5);
\draw (24,-12.5) -- (24,-11);
\draw (24,-11) -- (27,-9.5);
\draw (24,-11) -- (20,-8);
\draw (20,-8) -- (18,-5);
\draw (20,-8) -- (21,-6.5);
\draw (21,-6.5) -- (20,-5);
\draw (21,-6.5) -- (22,-5);
\draw (27,-9.5) -- (27,-8);
\draw (27,-8) -- (25,-6.5);
\draw (25,-6.5) -- (24,-5);
\draw (25,-6.5) -- (26,-5);
\draw (27,-8) -- (29,-6.5);
\draw (29,-6.5) -- (28,-5);
\draw (29,-6.5) -- (30,-5);
\node at (21,-14) {\small $T_7$};
\end{scope}

\begin{scope}[shift={(14,13)}]

\draw[fill=black] (0,-19) circle (3pt);
\draw[fill=black] (2,-19) circle (3pt);
\draw[fill=black] (4,-19) circle (3pt);
\draw[fill=black] (6,-19) circle (3pt);
\draw[fill=black] (8,-19) circle (3pt);
\draw[fill=black] (10,-19) circle (3pt);
\draw[fill=black] (12,-19) circle (3pt);
\draw[fill=black] (14,-19) circle (3pt);
\draw[fill=black] (1,-20.5) circle (3pt);
\draw[fill=black] (5,-20.5) circle (3pt);
\draw[fill=black] (9,-20.5) circle (3pt);
\draw[fill=black] (13,-20.5) circle (3pt);
\draw[fill=black] (3,-22) circle (3pt);
\draw[fill=black] (11,-22) circle (3pt);
\draw[fill=black] (3,-23.5) circle (3pt);
\draw[fill=black] (11,-23.5) circle (3pt);
\draw[fill=black] (7,-25) circle (3pt);
\draw[fill=black] (7,-26.5) circle (3pt);
\draw[fill=black] (7,-28) circle (3pt);
\draw[fill=black] (7,-29.5) circle (3pt);
\draw (7,-29.5) -- (7,-28);
\draw (7,-28) -- (7,-26.5);
\draw (7,-26.5) -- (7,-25);
\draw (7,-25) -- (3,-23.5);
\draw (3,-23.5) -- (3,-22);
\draw (3,-22) -- (1,-20.5);
\draw (1,-20.5) -- (0,-19);
\draw (1,-20.5) -- (2,-19);
\draw (3,-22) -- (5,-20.5);
\draw (5,-20.5) -- (4,-19);
\draw (5,-20.5) -- (6,-19);
\draw (7,-25) -- (11,-23.5);
\draw (11,-23.5) -- (11,-22);
\draw (11,-22) -- (9,-20.5);
\draw (9,-20.5) -- (8,-19);
\draw (9,-20.5) -- (10,-19);
\draw (11,-22) -- (13,-20.5);
\draw (13,-20.5) -- (12,-19);
\draw (13,-20.5) -- (14,-19);
\node at (4,-29.5) {\small $T_8$};

\draw[fill=black] (18,-19) circle (3pt);
\draw[fill=black] (20,-19) circle (3pt);
\draw[fill=black] (22,-19) circle (3pt);
\draw[fill=black] (24,-19) circle (3pt);
\draw[fill=black] (26,-19) circle (3pt);
\draw[fill=black] (28,-19) circle (3pt);
\draw[fill=black] (30,-19) circle (3pt);
\draw[fill=black] (32,-19) circle (3pt);
\draw[fill=black] (19,-20.5) circle (3pt);
\draw[fill=black] (23,-20.5) circle (3pt);
\draw[fill=black] (27,-20.5) circle (3pt);
\draw[fill=black] (31,-20.5) circle (3pt);
\draw[fill=black] (21,-22) circle (3pt);
\draw[fill=black] (29,-22) circle (3pt);
\draw[fill=black] (21,-23.5) circle (3pt);
\draw[fill=black] (29,-23.5) circle (3pt);
\draw[fill=black] (25,-25) circle (3pt);
\draw[fill=black] (25,-26.5) circle (3pt);
\draw[fill=black] (25,-28) circle (3pt);
\draw[fill=black] (25,-29.5) circle (3pt);
\draw[fill=black] (17.5,-18.25) circle (3pt);
\draw[fill=black] (18.5,-18.25) circle (3pt);
\draw[fill=black] (19.5,-18.25) circle (3pt);
\draw[fill=black] (20.5,-18.25) circle (3pt);
\draw[fill=black] (21.5,-18.25) circle (3pt);
\draw[fill=black] (22.5,-18.25) circle (3pt);
\draw[fill=black] (23.5,-18.25) circle (3pt);
\draw[fill=black] (24.5,-18.25) circle (3pt);
\draw[fill=black] (25.5,-18.25) circle (3pt);
\draw[fill=black] (26.5,-18.25) circle (3pt);
\draw[fill=black] (27.5,-18.25) circle (3pt);
\draw[fill=black] (28.5,-18.25) circle (3pt);
\draw[fill=black] (29.5,-18.25) circle (3pt);
\draw[fill=black] (30.5,-18.25) circle (3pt);
\draw[fill=black] (31.5,-18.25) circle (3pt);
\draw[fill=black] (32.5,-18.25) circle (3pt);
\draw[fill=black] (19,-19.75) circle (3pt);
\draw[fill=black] (23,-19.75) circle (3pt);
\draw[fill=black] (27,-19.75) circle (3pt);
\draw[fill=black] (31,-19.75) circle (3pt);
\draw[fill=black] (21,-21.25) circle (3pt);
\draw[fill=black] (29,-21.25) circle (3pt);
\draw[fill=black] (21,-22.75) circle (3pt);
\draw[fill=black] (29,-22.75) circle (3pt);
\draw[fill=black] (25,-24.25) circle (3pt);
\draw[fill=black] (25,-25.75) circle (3pt);
\draw[fill=black] (25,-27.25) circle (3pt);
\draw[fill=black] (25,-28.75) circle (3pt);
\draw (25,-29.5) -- (25,-24.25);
\draw (25,-24.25) -- (21,-23.5);
\draw (25,-24.25) -- (29,-23.5);
\draw (21,-23.5) -- (21,-21.25);
\draw (29,-23.5) -- (29,-21.25);
\draw (21,-21.25) -- (19,-20.5);
\draw (19,-20.5) -- (19,-19.75);
\draw (19,-19.75) -- (18,-19);
\draw (19,-19.75) -- (20,-19);
\draw (18,-19) -- (17.5,-18.25);
\draw (18,-19) -- (18.5,-18.25);
\draw (20,-19) -- (19.5,-18.25);
\draw (20,-19) -- (20.5,-18.25);
\draw (23,-20.5) -- (23,-19.75);
\draw (23,-19.75) -- (22,-19);
\draw (23,-19.75) -- (24,-19);
\draw (22,-19) -- (21.5,-18.25);
\draw (22,-19) -- (22.5,-18.25);
\draw (24,-19) -- (23.5,-18.25);
\draw (24,-19) -- (24.5,-18.25);
\draw (21,-21.25) -- (23,-20.5);
\draw (29,-21.25) -- (27,-20.5);
\draw (27,-20.5) -- (27,-19.75);
\draw (27,-19.75) -- (26,-19);
\draw (27,-19.75) -- (28,-19);
\draw (26,-19) -- (25.5,-18.25);
\draw (26,-19) -- (26.5,-18.25);
\draw (28,-19) -- (27.5,-18.25);
\draw (28,-19) -- (28.5,-18.25);
\draw (31,-20.5) -- (31,-19.75);
\draw (31,-19.75) -- (30,-19);
\draw (31,-19.75) -- (32,-19);
\draw (30,-19) -- (29.5,-18.25);
\draw (30,-19) -- (30.5,-18.25);
\draw (32,-19) -- (31.5,-18.25);
\draw (32,-19) -- (32.5,-18.25);
\draw (29,-21.25) -- (31,-20.5);
\node at (22,-29.5) {\small $T_{16}$};

\end{scope}

\end{tikzpicture}
\end{center}

Let us say that a vertex $v \in V(T_n)$ is \emph{splitting} if it has more than one immediate successor. The following claim is fairly obvious, but will be useful in what follows:

\begin{claim}
For each $n \in \N$, every $v \in V(T_n)$ that is not $\leq$-maximal either is a splitting vertex, or else it has a splitting vertex above it. Also, every splitting vertex in $V(T_n)$ has exactly two successors.
\end{claim}
\begin{proof}[Proof of claim:]
Both assertions are easily proved by induction on $n$.
\end{proof}

For every non-maximal vertex $v$ of $T_n$, let $\s(v)$ denote the $\leq$-least splitting vertex $w$ such that $v \leq w$. Some such vertex exists by the previous claim.

Recall that two vertices $b$ and $c$ in a partial order are \emph{incomparable} if $b \not\leq c$ and $c \not\leq b$. If $b$ and $c$ are incomparable, then let us write $b \wedge c$ to denote the $\leq$-greatest vertex that is below both $b$ and $c$. (Note that $b \wedge c$ is well-defined, because at least one vertex must be below both $b$ and $c$, namely the root.)

\begin{claim}
Fix $n \in \N$, and let $D \sub V(T_n)$ with $|D| > n$. Then there exist $a,b,c \in D$ such that $b$ and $c$ are incomparable, and $\s(a) = b \wedge c$.
\end{claim}
\begin{proof}[Proof of claim:]
The proof is by induction on $n$. 

The base case $n = 1$ is vacuously true: $T_1$ has only a single vertex, so there is no $D \sub V(T_1)$ with $|D| > 1$. 

Let $n > 1$ and suppose the claim holds for all $m < n$. (In fact, we really only need the inductive hypothesis for $m = \left \lfloor \frac{n}{2} \right \rfloor, \left \lfloor \frac{n+1}{2} \right \rfloor$; observe that both these values of $m$ are strictly less than $n$ when $n > 1$.) For convenience let $A$, $B$, and $C$ denote, respectively, the $\left \lfloor \frac{n}{2} \right \rfloor$ vertices at the bottom of $T_n$, the $\card{V(T_{\left \lfloor \frac{n}{2} \right \rfloor})}$ vertices forming an isomorphic copy of $T_{\left \lfloor \frac{n}{2} \right \rfloor}$, and the $\card{V(T_{\left \lfloor \frac{n+1}{2} \right \rfloor})}$ vertices forming an isomorphic copy of $T_{\left \lfloor \frac{n+1}{2} \right \rfloor}$.

Let $D \sub V(T_n)$ with $|D| > n$. If $|D \cap B| > \left \lfloor \frac{n}{2} \right \rfloor$ then, because the claim holds for $\left \lfloor \frac{n}{2} \right \rfloor$ by hypothesis and the vertices in $B$ form an isomorphic copy of $T_{\left \lfloor \frac{n}{2} \right \rfloor}$, $D \cap B$ must contain some $a,b,c$ such that $b$ and $c$ are incomparable and $\s(a) = b \wedge c$. Similarly, if $|D \cap C| > \left \lfloor \frac{n+1}{2} \right \rfloor$ then, because the claim holds for $\left \lfloor \frac{n+1}{2} \right \rfloor$ by hypothesis, $D \cap C$ must contain some $a,b,c$ such that $b$ and $c$ are incomparable and $\s(a) = b \wedge c$.
For the remaining case, suppose that $|D \cap B| \leq \left \lfloor \frac{n}{2} \right \rfloor$ and $|D \cap C| \leq \left \lfloor \frac{n+1}{2} \right \rfloor$. Observe that 
$$\textstyle |D \cap (B \cup C)| = |D \cap B| + |D \cap C| \leq \left \lfloor \frac{n}{2} \right \rfloor + \left \lfloor \frac{n+1}{2} \right \rfloor = n$$
and because $|D| > n$, this implies $D \cap A \neq \0$. Similarly,
$$\textstyle |D \cap (A \cup B)| \leq |A| + |D \cap B| \leq \left \lfloor \frac{n}{2} \right \rfloor + \left \lfloor \frac{n}{2} \right \rfloor \leq n$$
and because $|D| > n$, this implies $D \cap C \neq \0$. Similarly,
$$\textstyle |D \cap (A \cup C)| \leq |A| + |D \cap C| \leq \left \lfloor \frac{n}{2} \right \rfloor + \left \lfloor \frac{n+1}{2} \right \rfloor = n$$
and because $|D| > n$, this implies $D \cap B \neq \0$. Thus each of $D \cap A$, $D \cap B$, and $D \cap C$ is nonempty. It is clear that if we choose any $a \in D \cap A$, $b \in D \cap B$, and $c \in D \cap C$, then $b$ and $c$ are incomparable, and $\s(a) = b \wedge c$. Thus the claim is true for $n$ and, by induction, the claim is true for all $n \in \N$.
\end{proof}

Recall that a \emph{maximal chain} in one of the $T_n$ is a set of vertices of $T_n$ that is $(1)$ linearly ordered by $\leq$, and $(2)$ properly contained in no other linearly ordered set of vertices. Equivalently, a maximal chain is (the underlying set of) a path in $T_n$ connecting the root to a $\leq$-maximal vertex.

\begin{claim}
For every $n \in \N$, there is an $\Ser(n+1)$-bounding family $\F \sub \mathrm{Fn}(k,2)$, where $k = \card{V(T_n)}$.
\end{claim}
\begin{proof}[Proof of claim:]
Fix $n \in \N$, and let $\leq$ denote the natural tree order on $V(T_n)$. Every splitting vertex in $V(T_n)$ has exactly two successors by a previous claim; for every splitting vertex $s \in V(T_n)$, arbitrarily define a bijection $\lambda_s$ from the two successors of $s$ to the set $\{p,n\}$. For each maximal chain $P \sub V(T_n)$, if $v$ is not the topmost vertex of $P$ then let $\mathrm{succ}_P(v)$ denote the vertex in $P$ immediately above $v$.

To each maximal chain $P$ in $T_n$ we now associate two functions, denoted $f^+_P$ and $f^-_P$. The domain of both functions is $P$. If $v \in P$ is not the topmost vertex of $P$, then define
$$f^+_P(v) \,=\, f^-_P(v) \,=\, \lambda_{\s(v)}(\mathrm{succ}_P(\s(v)))$$
and if $v$ is the topmost vertex in $P$, then let $f^+_P(t) = p$ and $f^-_P(t) = n$.

\begin{center}
\begin{tikzpicture}[xscale=.4,yscale=.42]

\node at (5.25,-3.3) {$P$};

\draw[fill=black] (4,-7.5) circle (4pt);
\draw[fill=black] (4,-6) circle (4pt);
\draw[fill=black] (4,-4.5) circle (4pt);
\draw[fill=black] (7,-3) circle (4pt);
\draw[fill=black] (0,-1.5) circle (3pt);
\draw[fill=black] (1,0) circle (3pt);
\draw[fill=black] (7,-1.5) circle (4pt);
\draw[fill=black] (5,0) circle (4pt);
\draw[fill=black] (9,0) circle (3pt);
\draw[fill=black] (-2,1.5) circle (3pt);
\draw[fill=black] (0,1.5) circle (3pt);
\draw[fill=black] (2,1.5) circle (3pt);
\draw[fill=black] (4,1.5) circle (3pt);
\draw[fill=black] (6,1.5) circle (4pt);
\draw[fill=black] (8,1.5) circle (3pt);
\draw[fill=black] (10,1.5) circle (3pt);
\draw[blue,line width=.67mm] (4,-7.5) -- (4,-6);
\draw[blue,line width=.67mm] (4,-6) -- (4,-4.5);
\draw[blue,line width=.67mm] (4,-4.5) -- (7,-3);
\draw (4,-4.5) -- (0,-1.5);
\draw (0,-1.5) -- (-2,1.5);
\draw (0,-1.5) -- (1,0);
\draw (1,0) -- (0,1.5);
\draw (1,0) -- (2,1.5);
\draw[blue,line width=.67mm] (7,-3) -- (7,-1.5);
\draw[blue,line width=.67mm] (7,-1.5) -- (5,0);
\draw (5,0) -- (4,1.5);
\draw[blue,line width=.67mm] (5,0) -- (6,1.5);
\draw (7,-1.5) -- (9,0);
\draw (9,0) -- (8,1.5);
\draw (9,0) -- (10,1.5);

\draw[black!30] (20.7,-13.75) -- (20.7,-10.75);
\draw[black!30] (20.7,-10.75) -- (23.7,-9.25);
\draw[black!30] (20.7,-10.75) -- (16.7,-7.75);
\draw[black!30] (16.7,-7.75) -- (17.7,-6.25);
\draw[black!30] (16.7,-7.75) -- (14.7,-4.75);
\draw[black!30] (17.7,-6.25) -- (16.7,-4.75);
\draw[black!30] (17.7,-6.25) -- (18.7,-4.75);
\draw[black!30] (23.7,-9.25) -- (23.7,-7.75);
\draw[black!30] (23.7,-7.75) -- (21.7,-6.25);
\draw[black!30] (23.7,-7.75) -- (25.7,-6.25);
\draw[black!30] (21.7,-6.25) -- (20.7,-4.75);
\draw[black!30] (21.7,-6.25) -- (22.7,-4.75);
\draw[black!30] (25.7,-6.25) -- (24.7,-4.75);
\draw[black!30] (25.7,-6.25) -- (26.7,-4.75);

\draw[thick,fill=white] (20,-14) rectangle (21.4,-13.5) {};
\node at (20.3,-13.75) {\tiny $n$};
\node at (21.1,-13.75) {\tiny $p$};
\draw[thick,fill=white] (26,-5) rectangle (27.4,-4.5) {};
\node at (26.3,-4.75) {\tiny $n$};
\node at (27.1,-4.75) {\tiny $p$};
\draw[thick,fill=white] (24,-5) rectangle (25.4,-4.5) {};
\node at (24.3,-4.75) {\tiny $n$};
\node at (25.1,-4.75) {\tiny $p$};
\draw[thick,fill=white] (22,-5) rectangle (23.4,-4.5) {};
\node at (22.3,-4.75) {\tiny $n$};
\node at (23.1,-4.75) {\tiny $p$};
\draw[thick,fill=white] (20,-5) rectangle (21.4,-4.5) {};
\node at (20.3,-4.75) {\tiny $n$};
\node at (21.1,-4.75) {\tiny $p$};
\draw[thick,fill=white] (18,-5) rectangle (19.4,-4.5) {};
\node at (18.3,-4.75) {\tiny $n$};
\node at (19.1,-4.75) {\tiny $p$};
\draw[thick,fill=white] (16,-5) rectangle (17.4,-4.5) {};
\node at (16.3,-4.75) {\tiny $n$};
\node at (17.1,-4.75) {\tiny $p$};
\draw[thick,fill=white] (14,-5) rectangle (15.4,-4.5) {};
\node at (14.3,-4.75) {\tiny $n$};
\node at (15.1,-4.75) {\tiny $p$};
\draw[thick,fill=white] (25,-6.5) rectangle (26.4,-6) {};
\node at (25.3,-6.25) {\tiny $n$};
\node at (26.1,-6.25) {\tiny $p$};
\draw[thick,fill=white] (21,-6.5) rectangle (22.4,-6) {};
\node at (21.3,-6.25) {\tiny $n$};
\node at (22.1,-6.25) {\tiny $p$};
\draw[thick,fill=white] (17,-6.5) rectangle (18.4,-6) {};
\node at (17.3,-6.25) {\tiny $n$};
\node at (18.1,-6.25) {\tiny $p$};
\draw[thick,fill=white] (16,-8) rectangle (17.4,-7.5) {};
\node at (16.3,-7.75) {\tiny $n$};
\node at (17.1,-7.75) {\tiny $p$};
\draw[thick,fill=white] (23,-8) rectangle (24.4,-7.5) {};
\node at (23.3,-7.75) {\tiny $n$};
\node at (24.1,-7.75) {\tiny $p$};
\draw[thick,fill=white] (23,-9.5) rectangle (24.4,-9) {};
\node at (23.3,-9.25) {\tiny $n$};
\node at (24.1,-9.25) {\tiny $p$};
\draw[thick,fill=white] (20,-11) rectangle (21.4,-10.5) {};
\node at (20.3,-10.75) {\tiny $n$};
\node at (21.1,-10.75) {\tiny $p$};
\draw[thick,fill=white] (20,-12.5) rectangle (21.4,-12) {};
\node at (20.3,-12.25) {\tiny $n$};
\node at (21.1,-12.25) {\tiny $p$};

\path[draw,thick,blue,use Hobby shortcut,closed=true]
(22.3,-5.5) .. (22.3,-5.7) .. (21.6,-6.25) .. (22,-6.7) .. (22.9,-7.75) .. (22.8,-9.25) .. (20.7,-10.75) .. (20.6,-13.25) .. (20.6,-13.75) .. (21.1,-14.2) .. (21.6,-13.75) .. (21.6,-13.25) .. (21.6,-10.75) .. (23.3,-9.7) .. (23.75,-9.25) .. (23.85,-8.5) .. (23.75,-7.75) .. (23.4,-7.3) .. (22.6,-6.4) .. (22.6,-6.1) .. (23.1,-5.7) .. (23.1,-5.4) .. (22.9,-5.1) .. (22,-4.3) .. (22,-5.1);

\node at (24,-11.5) {\Large $f_P^-$};

\draw[black!30] (20.7,-.75) -- (20.7,2.25);
\draw[black!30] (20.7,2.25) -- (23.7,3.75);
\draw[black!30] (20.7,2.25) -- (16.7,5.25);
\draw[black!30] (16.7,5.25) -- (17.7,6.75);
\draw[black!30] (16.7,5.25) -- (14.7,8.25);
\draw[black!30] (17.7,6.75) -- (16.7,8.25);
\draw[black!30] (17.7,6.75) -- (18.7,8.25);
\draw[black!30] (23.7,3.75) -- (23.7,5.25);
\draw[black!30] (23.7,5.25) -- (21.7,6.75);
\draw[black!30] (23.7,5.25) -- (25.7,6.75);
\draw[black!30] (21.7,6.75) -- (20.7,8.25);
\draw[black!30] (21.7,6.75) -- (22.7,8.25);
\draw[black!30] (25.7,6.75) -- (24.7,8.25);
\draw[black!30] (25.7,6.75) -- (26.7,8.25);

\draw[thick,fill=white] (20,-1) rectangle (21.4,-.5) {};
\node at (20.3,-.75) {\tiny $n$};
\node at (21.1,-.75) {\tiny $p$};
\draw[thick,fill=white] (26,8) rectangle (27.4,8.5) {};
\node at (26.3,8.25) {\tiny $n$};
\node at (27.1,8.25) {\tiny $p$};
\draw[thick,fill=white] (24,8) rectangle (25.4,8.5) {};
\node at (24.3,8.25) {\tiny $n$};
\node at (25.1,8.25) {\tiny $p$};
\draw[thick,fill=white] (22,8) rectangle (23.4,8.5) {};
\node at (22.3,8.25) {\tiny $n$};
\node at (23.1,8.25) {\tiny $p$};
\draw[thick,fill=white] (20,8) rectangle (21.4,8.5) {};
\node at (20.3,8.25) {\tiny $n$};
\node at (21.1,8.25) {\tiny $p$};
\draw[thick,fill=white] (18,8) rectangle (19.4,8.5) {};
\node at (18.3,8.25) {\tiny $n$};
\node at (19.1,8.25) {\tiny $p$};
\draw[thick,fill=white] (16,8) rectangle (17.4,8.5) {};
\node at (16.3,8.25) {\tiny $n$};
\node at (17.1,8.25) {\tiny $p$};
\draw[thick,fill=white] (14,8) rectangle (15.4,8.5) {};
\node at (14.3,8.25) {\tiny $n$};
\node at (15.1,8.25) {\tiny $p$};
\draw[thick,fill=white] (25,6.5) rectangle (26.4,7) {};
\node at (25.3,6.75) {\tiny $n$};
\node at (26.1,6.75) {\tiny $p$};
\draw[thick,fill=white] (21,6.5) rectangle (22.4,7) {};
\node at (21.3,6.75) {\tiny $n$};
\node at (22.1,6.75) {\tiny $p$};
\draw[thick,fill=white] (17,6.5) rectangle (18.4,7) {};
\node at (17.3,6.75) {\tiny $n$};
\node at (18.1,6.75) {\tiny $p$};
\draw[thick,fill=white] (16,5) rectangle (17.4,5.5) {};
\node at (16.3,5.25) {\tiny $n$};
\node at (17.1,5.25) {\tiny $p$};
\draw[thick,fill=white] (23,5) rectangle (24.4,5.5) {};
\node at (23.3,5.25) {\tiny $n$};
\node at (24.1,5.25) {\tiny $p$};
\draw[thick,fill=white] (23,3.5) rectangle (24.4,4) {};
\node at (23.3,3.75) {\tiny $n$};
\node at (24.1,3.75) {\tiny $p$};
\draw[thick,fill=white] (20,2) rectangle (21.4,2.5) {};
\node at (20.3,2.25) {\tiny $n$};
\node at (21.1,2.25) {\tiny $p$};
\draw[thick,fill=white] (20,.5) rectangle (21.4,1) {};
\node at (20.3,.75) {\tiny $n$};
\node at (21.1,.75) {\tiny $p$};

\path[draw,thick,blue,use Hobby shortcut,closed=true]
(22,7.3) .. (21.6,6.75) .. (22,6.3) .. (22.9,5.25) .. (22.8,3.75) .. (20.7,2.25) .. (20.6,-.25) .. (20.6,-.75) .. (21.1,-1.2) .. (21.6,-.75) .. (21.6,-.25) .. (21.6,2.25) .. (23.3,3.3) .. (23.75,3.75) .. (23.85,4.5) .. (23.75,5.25) .. (23.4,5.7) .. (22.6,6.6) .. (22.6,6.9) .. (22.9,7.3) .. (23.1,7.5) .. (23.4,7.9) .. (23.4,8.7) .. (22.5,7.9) .. (22.3,7.5);

\node at (24,1.5) {\Large $f_P^+$};

\end{tikzpicture}
\end{center}

By relabelling the vertices of $T_n$, we may consider
$$\F = \set{f_P^+,f_P^-}{P \text{ is a maximal chain in }T_n}$$
to be a subset of $\mathrm{Fn}(k,2)$, where $k = |V(T_n)|$. We claim that $\F$ is $\Ser(n+1)$-bounding.

For every non-maximal vertex $v$ of $T_n$, there is are maximal chains $P$ and $Q$ containing $v$, such that $P$ and $Q$ include different successors of $\s(v)$. But then $v \in \mathrm{dom}(f^+_P)$ and $v \in \mathrm{dom}(f^+_Q)$, and $f^+_P(v) \neq f^+_Q(v)$, so that $\{f^+_P(v), f^+_Q(v)\} = \{p,n\}$. If $v$ is a maximal vertex of $T_n$, then there is a unique path $P$ containing $v$, and we have $f^+_P(v) = p$ and $f^-_P(v) = n$.
Hence $\F$ is full. 

Suppose that $\G \sub \F$, that $D \sub \bigcup_{g \in \G}\mathrm{dom}(g)$, and that $|D| \geq n+1$. We claim that some two functions in $\G$ disagree on $D$. By a previous claim, $|D| > n$ implies that there are some $a,b,c \in D$ such that $b$ and $c$ are incomparable, and $\s(a) = b \wedge c$. Let $g_b,g_c \in \G$ such that $b \in \mathrm{dom}(g_b)$ and $c \in \mathrm{dom}(g_c)$. By our construction of the functions in $\F$, we must have $a \in \mathrm{dom}(g_b) \cap \mathrm{dom}(g_c)$, and $g_b(a) = g_b(\s(a)) \neq g_c(\s(a)) = g_c(a)$.

Hence $\F$ is $\Ser(n+1)$-bounding.
\end{proof}

It is obvious from the construction of the $T_n$ that $\card{V(T_n)} = k_n$ for every $n$, so this claim completes the proof of the theorem.
\end{proof}

\begin{theorem}\label{thm:lbSer}
Let $k_1, k_2, k_3, \dots$ denote the sequence defined in Lemma~\ref{lem:lbSer}. 
 $$\Ser(n) \geq k_{n-1}+1$$ for every $n > 1$. 
Moreover, $k_n > \frac{1}{2}(n-1) \log_2 (n-1) - \frac{1}{2}n + 2$, and thus
$$\textstyle \Ser(n) \,>\, \frac{1}{2}(n-1) \log_2 (n-1) - \frac{1}{2}n + 2$$ 
for every $n > 1$.
\end{theorem}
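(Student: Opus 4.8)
The plan is to prove the two displayed inequalities in sequence, using the sequence $(k_n)$ from Lemma~\ref{lem:lbSer} as the bridge between them. For the first inequality, recall that Lemma~\ref{lem:lbSer} produces, for each $n$, an $\Ser(n+1)$-bounding full subset $\F \sub \mathrm{Fn}(k_n,2)$. By the displayed characterization of $\Ser$ in terms of $\Ser(m)$-bounding families (the line just after Definition~\ref{def:Serbounding}), the existence of an $\Ser(n+1)$-bounding family with $\bigcup\F = \{1,2,\dots,k_n\}$ of size $k_n$ gives $\Ser(n+1) \geq k_n + 1$. Reindexing (replace $n+1$ by $n$, so $n > 1$), this is exactly $\Ser(n) \geq k_{n-1} + 1$. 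This part is essentially bookkeeping, so I expect it to be short.

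The substantive part is the lower bound $k_n > \frac{1}{2}(n-1)\log_2(n-1) - \frac{1}{2}n + 2$, which I would prove by induction on $n$ using the recurrence $k_n = \lfloor n/2 \rfloor + k_{\lfloor n/2 \rfloor} + k_{\lfloor (n+1)/2 \rfloor}$. The idea is that dropping the $\lfloor n/2 \rfloor$ term and estimating $k_{\lfloor n/2 \rfloor} + k_{\lfloor (n+1)/2 \rfloor} \geq 2 k_{\lfloor n/2 \rfloor}$ (using monotonicity of $(k_n)$, which follows from the recurrence by an easy induction) gives roughly $k_n \geq 2 k_{n/2}$, and iterating this $\log_2 n$ times picks up a factor of $n$, while the additive $\lfloor n/2 \rfloor$ terms accumulated along the way sum to something of order $n$ — but with the right sign to land a leading term $\frac{1}{2} n \log_2 n$. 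Concretely I would guess a clean lower-bound function $g(n) = \frac{1}{2}(n-1)\log_2(n-1) - \frac{1}{2}n + 2$ (or a slight variant chosen to make the induction close), verify small base cases from $k_1 = 1$, $k_2 = 2$, $k_3 = 4$, etc., and then for the inductive step substitute $k_{\lfloor n/2\rfloor}, k_{\lfloor (n+1)/2\rfloor} \geq g(\lfloor n/2 \rfloor)$ and check that $\lfloor n/2 \rfloor + 2 g(\lfloor n/2 \rfloor) \geq g(n)$. Splitting into the cases $n$ even and $n$ odd keeps the floor functions manageable; the even case reduces to checking $\tfrac{n}{2} + 2 g(\tfrac n2) \ge g(n)$, i.e., an inequality purely about the function $x \mapsto \tfrac12(x-1)\log_2(x-1)$, which holds because that function is convex and superadditive in the relevant range.

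Finally, combining the two parts: $\Ser(n) \geq k_{n-1} + 1 > g(n-1) + 1$, and one checks $g(n-1) + 1 \geq \frac{1}{2}(n-1)\log_2(n-1) - \frac{1}{2}n + 2$ — indeed $g(n-1) = \frac{1}{2}(n-2)\log_2(n-2) - \frac{1}{2}(n-1) + 2$, and a short monotonicity estimate on the $x\log_2 x$ term absorbs the discrepancy between $(n-2)\log_2(n-2)$ and $(n-1)\log_2(n-1)$ into the constant. I expect the main obstacle to be purely technical: choosing the auxiliary function $g$ with the correct constant term so that \emph{both} the induction step closes at every $n$ \emph{and} the final comparison with the claimed bound goes through, since the floor functions introduce a parity-dependent slack of order $1$ that has to be tracked carefully rather than asymptotically. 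No genuinely new idea is needed beyond the "$k_n \gtrsim 2 k_{n/2}$ plus linear overhead" heuristic; it is a matter of making the bookkeeping tight.
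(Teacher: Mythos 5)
Your first step is fine and is exactly what the paper does: Lemma~\ref{lem:lbSer} supplies an $\Ser(n+1)$-bounding full subset of $\mathrm{Fn}(k_n,2)$, and the characterization of $\Ser$ following Definition~\ref{def:Serbounding} then gives $\Ser(n)\geq k_{n-1}+1$. The gap is in your induction for the bound on $k_n$. Note first that the stated conclusion forces you to prove $k_m > f(m)$ for $f(x)=\frac{1}{2}(x\log_2 x - x+1)$, since the claimed bound on $\Ser(n)$ is precisely $f(n-1)+1$; you have no room to weaken the target function. Your proposed inductive step replaces $k_{\lfloor (n+1)/2\rfloor}$ by $k_{\lfloor n/2\rfloor}$ and then checks $\lfloor n/2\rfloor + 2g(\lfloor n/2\rfloor)\geq g(n)$. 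This cannot work, because the recurrence is essentially tight against $f$: for even $n=2m$ one has $m+2f(m)=f(2m)+\frac{1}{2}$ (slack exactly $\frac{1}{2}$), while for odd $n=2m+1$ one has $m+2f(m)=m\log_2 m+1$ but $f(2m+1)=(m+\frac{1}{2})\log_2(m+\frac{1}{2})+\frac{1}{2}$, which exceeds it by roughly $\frac{1}{2}\log_2 m$. So discarding the increment $k_{m+1}-k_m$ (itself of order $\frac{1}{2}\log_2 m$) at each odd step creates a deficit that grows with $n$ and cannot be absorbed by adjusting the constant term of $g$; the same failure occurs if you take $g$ to be the target function itself (the even-case check already fails at $n=32$). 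This is not bookkeeping slack of order $1$, as you predict, but a loss of order $\log n$ per step.

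The missing idea is to keep both terms $k_{\lfloor n/2\rfloor}$ and $k_{\lfloor (n+1)/2\rfloor}$ and exploit convexity of $f$ in the form $f(\lfloor n/2\rfloor)+f(\lfloor (n+1)/2\rfloor)\geq 2f(n/2)$; combined with $\lfloor n/2\rfloor\geq\frac{n-1}{2}$ this yields $k_n > \frac{n-1}{2}+2f(n/2)=f(n)$ as an exact identity, with no parity case split at all. (You do invoke convexity, but only to justify superadditivity in the even case, where it is not the issue; it is the odd case that needs it, and in the form above.) A small symptom worth noting: the sequence begins $k_1=1$, $k_2=3$, $k_3=5$, not $1,2,4$ as you list, which suggests the recurrence was not actually computed.
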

\begin{proof}
Let $f(x) = \frac{1}{2}(x \log_2 x - x + 1)$. Note that the second derive of $f(x)$ is positive everywhere on the interval $(0,\infty)$: 
$$\textstyle f'(x) = \frac{1}{2 \ln 2}(1 + \ln x) - \frac{1}{2} \qquad \text{ and } \qquad f''(x) \,=\, \frac{1}{(2 \ln 2) x} \,>\, 0.$$
This implies that $f(x)$ is \emph{convex} on $(0,\infty)$, meaning that if $0 < a < b$, the graph of $y = f(x)$ contains no points strictly above the line segment connecting $(a,f(a))$ and $(b,f(b))$.
In particular, if $x \in (0,\infty)$ then
$$\textstyle f(x+\frac{1}{2}) \,\leq\, \frac{1}{2}(f(x)+f(x+1)),$$
and this implies $\textstyle f \!\left( \left \lfloor \frac{n}{2} \right \rfloor \right) + f \!\left( \left \lfloor \frac{n+1}{2} \right \rfloor \right) \,\geq\, 2 f \!\left( \frac{n}{2} \right)$ for every $n \in \N$.

Let $k_1, k_2, k_3, \dots$ denote the sequence defined in Lemma~\ref{lem:lbSer}. We prove by induction on $n$ that $f(n) < k_n$ for all $n \in \N$. The base case is straightforward: $f(1) = 0 < 1 = k_1$. For the inductive step, fix $n > 1$ and suppose $k_\ell < f(\ell)$ whenever $\ell < n$. Using the conclusion of the previous paragraph and the inductive hypothesis, we have
\begin{align*}
k_n &\,=\, \left \lfloor \frac{n}{2} \right \rfloor + k_{\left \lfloor \frac{n}{2} \right \rfloor} + k_{\left \lfloor \frac{n+1}{2} \right \rfloor} \,>\, \left \lfloor \frac{n}{2} \right \rfloor + \textstyle f \!\left( \left \lfloor \frac{n}{2} \right \rfloor \right) + f \!\left( \left \lfloor \frac{n+1}{2} \right \rfloor \right) \,\geq\, \textstyle \frac{n-1}{2} + 2 f \!\left( \frac{n}{2} \right) \vphantom{f^{f^{f^f}}} \\
&\!\!= \textstyle \frac{n-1}{2} + \frac{n}{2}\log_2 \frac{n}{2} - \frac{n}{2} + 1 = \textstyle \frac{1}{2}(n\log_2 \frac{n}{2}) + \frac{1}{2} = \textstyle \frac{1}{2}(n \log_2 n - n + 1) = f(n). \vphantom{f^{f^{f^f}}f_{f_{f_f}}}
\end{align*}
By induction, $f(n) < k_n$ for all $n \in \N$ as claimed.

By Lemma~\ref{lem:lbSer}, there is for every $n \in \N$ a $\Ser(n+1)$-bounding full subset of $\mathrm{Fn}(k_n,2)$.
Thus for every $n > 1$,
$$\textstyle \Ser(n) \,\geq\, k_{n-1}+1 \,>\, f(n-1)+1 \,=\, \frac{1}{2}(n-1) \log_2 (n-1) - \frac{1}{2}n + 2,$$
as claimed.
\end{proof}

\begin{corollary}
Let $k_1, k_2, k_3, \dots$ denote the sequence defined in Lemma~\ref{lem:lbSer}. Then for every $n$, $\Hyp(n) \geq k_n$ and consequently,
$$\textstyle \Hyp(n) \,>\, \frac{1}{2} n \log_2 n - \frac{1}{2}n + \frac{1}{2}.$$
\end{corollary}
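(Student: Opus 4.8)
The plan is to read $\Hyp(n) \geq k_n$ off of the $\Ser(n+1)$-bounding full family that Lemma~\ref{lem:lbSer} already produces. The bridge is the same correspondence between full families and hypergraphs without isolated vertices that was used in the proof of Theorem~\ref{thm:Ser}: from a full $\F \sub \mathrm{Fn}(k_n,2)$ one builds the hypergraph with vertex set $V = \{1,2,\dots,k_n\}$ and hyperedge set $\HH = \set{\mathrm{dom}(f)}{f \in \F}$, and fullness of $\F$ forces $\bigcup \HH = V$, so that $(V,\HH)$ has no isolated vertices.

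The one substantive step is to check that when $\F$ is $\Ser(n+1)$-bounding, the hypergraph $(V,\HH)$ contains no partition of size greater than $n$. I would argue by contradiction: suppose $D \sub V$ and $\P \sub \HH$ form a partition with $|D| \geq n+1$. For each $E \in \P$ pick some $f_E \in \F$ with $\mathrm{dom}(f_E) = E$ (possible since $E \in \HH$) and set $\G = \set{f_E}{E \in \P}$. Then $D \sub \bigcup \P = \bigcup_{g \in \G}\mathrm{dom}(g)$, and since every vertex of $D$ lies in exactly one member of $\P$, no two distinct members of $\G$ even have a common point of $D$ in their domains, let alone disagree on one. This contradicts the defining property of a $\Ser(n+1)$-bounding family (Definition~\ref{def:Serbounding}), so no such partition exists. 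Applying this to the family supplied by Lemma~\ref{lem:lbSer} gives a hypergraph on $k_n$ vertices with no isolated vertices and no partition of size greater than $n$, which by Definition~\ref{def:sunflower} witnesses $\Hyp(n) \geq k_n$.

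For the numerical inequality I would simply quote the estimate proved inside Theorem~\ref{thm:lbSer}, namely that $f(n) < k_n$ for all $n$, where $f(n) = \tfrac{1}{2}(n\log_2 n - n + 1) = \tfrac{1}{2}n\log_2 n - \tfrac{1}{2}n + \tfrac{1}{2}$. Chaining this with $\Hyp(n) \geq k_n$ yields $\Hyp(n) > \tfrac{1}{2}n\log_2 n - \tfrac{1}{2}n + \tfrac{1}{2}$, as claimed.

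There is no real obstacle in this corollary; the only point that requires a moment's care is the logical direction in the partition argument — a partition gives something strictly stronger than ``no two members of $\G$ disagree on $D$'' (it gives pairwise disjointness on $D$), so one must phrase the contradiction against the $\Ser(n+1)$-bounding hypothesis correctly rather than accidentally arguing the converse. Everything else is a direct reuse of Lemma~\ref{lem:lbSer}, the $\F \mapsto (V,\HH)$ construction from Theorem~\ref{thm:Ser}, and the convexity estimate from Theorem~\ref{thm:lbSer}.
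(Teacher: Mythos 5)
Your proof is correct, and it takes a slightly more direct route than the paper. The paper's own proof is a two-line chaining of already-established inequalities: Theorem~\ref{thm:Ser} gives $\Hyp(n) \geq \Ser(n+1)-1$ and Theorem~\ref{thm:lbSer} gives $\Ser(n+1) \geq k_n+1$, so $\Hyp(n) \geq k_n$. You instead bypass $\Ser(n+1)$ entirely and convert the $\Ser(n+1)$-bounding family of Lemma~\ref{lem:lbSer} directly into a hypergraph witness, running the $\F \mapsto (V,\HH)$ construction from the proof of Theorem~\ref{thm:Ser} in contrapositive form: a partition of size $\geq n+1$ would yield $\G$ and $D$ in which no two functions even share a point of $D$ in their domains, contradicting the bounding property. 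The mathematical content is the same as what is inside the proof of Theorem~\ref{thm:Ser}, so nothing genuinely new is needed, but your version has the virtue of producing an explicit extremal hypergraph on $k_n$ vertices --- which is exactly the observation the paper makes in the remark immediately following the corollary (the branches of $T_n$ form such a hypergraph). Your handling of the one delicate point is right: a partition gives pairwise disjointness of the chosen domains on $D$, which is stronger than mere agreement, and you correctly contradict the bounding hypothesis rather than its converse. The numerical bound via $k_n > f(n) = \tfrac{1}{2}n\log_2 n - \tfrac{1}{2}n + \tfrac{1}{2}$ is quoted correctly from the induction in Theorem~\ref{thm:lbSer}.
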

\begin{proof}
The first inequality follows from the previous theorem and Theorem~\ref{thm:Ser}, which asserts that
$\Hyp(n) \geq \Ser(n+1)-1$ for all $n$. The second inequality is then a direct consequence of the lower bound on the $k_n$ found in the proof of Theorem~\ref{thm:lbSer}.
\end{proof}

The bound in this corollary implies that there is, for every $n$, a hypergraph on $k_n$ vertices containing no partitions of size greater than $n$. Indeed, the proof of Lemma~\ref{lem:lbSer} gives us such hypergraphs: for each $n$, if $\B_n$ denotes the set of all branches through the tree $T_n$, then the hypergraph $(T_n,\B_n)$ is a witness to the assertion $\Hyp(n) \geq |T_n| = k_n$.

The next corollary simply converts the bound from Theorem~\ref{thm:lbSer} into a form that reveals its asymptotic growth rate.

\begin{corollary}
For every $n \in \N$,
$$\textstyle \Ser(n) \,>\, \frac{1}{2}n \log_2 n - \frac{1}{2}n - \frac{1}{2}\log_2 n + \frac{\ln 16 - 1}{\ln 4}.$$
\end{corollary}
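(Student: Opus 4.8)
The plan is to read the bound straight off Theorem~\ref{thm:lbSer} together with the convexity of $f(x) = \frac{1}{2}(x\log_2 x - x + 1)$ that was already established in the proof of that theorem. Theorem~\ref{thm:lbSer} gives $\Ser(n) > f(n-1) + 1$ for all $n > 1$, so all that remains is to expand $f(n-1)$ around $n$ to leading order. Since $f$ is convex on $(0,\infty)$, its graph lies above every tangent line; applying this at $x = n$ and evaluating at $x = n-1$ gives
$$f(n-1) \,\geq\, f(n) + f'(n)\bigl((n-1) - n\bigr) \,=\, f(n) - f'(n),$$
which is valid for every $n \geq 2$ (so that $n - 1 \geq 1$ stays in the region of convexity).

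Next I would substitute the explicit formulas $f(n) = \frac{1}{2}(n\log_2 n - n + 1)$ and $f'(n) = \frac{1}{2\ln 2}(1 + \ln n) - \frac{1}{2}$ recorded in the proof of Theorem~\ref{thm:lbSer}, and simplify $f(n) - f'(n) + 1$. The $n\log_2 n$ term survives as $\frac{1}{2}n\log_2 n$, the linear term as $-\frac{1}{2}n$, and the term from differentiating $n\log_2 n$ contributes $-\frac{1}{2}\log_2 n$; the pure constants combine as $\frac{1}{2} + \frac{1}{2} - \frac{1}{2\ln 2} + 1 = 2 - \frac{1}{\ln 4} = \frac{\ln 16 - 1}{\ln 4}$, using $2\ln 2 = \ln 4$ and $2\ln 4 = \ln 16$. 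Putting these together yields
$$\Ser(n) \,>\, f(n-1) + 1 \,\geq\, f(n) - f'(n) + 1 \,=\, \tfrac{1}{2}n\log_2 n - \tfrac{1}{2}n - \tfrac{1}{2}\log_2 n + \tfrac{\ln 16 - 1}{\ln 4}$$
for all $n \geq 2$.

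Finally, the case $n = 1$ must be checked separately, since Theorem~\ref{thm:lbSer} excludes it: here $\Ser(1) = 1$ while the right-hand side equals $-\frac{1}{2} + \frac{\ln 16 - 1}{\ln 4}$, which is less than $1$, so the inequality holds there as well. I do not anticipate any real obstacle; the only points requiring care are remembering to dispatch $n = 1$ by hand and noting that the tangent-line estimate needs $n - 1 > 0$, which is automatic once $n \geq 2$.
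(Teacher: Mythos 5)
Your proof is correct, and its overall structure matches the paper's: both start from Theorem~\ref{thm:lbSer}'s bound $\Ser(n) > f(n-1)+1$ and then bound $(n-1)\log_2(n-1)$ from below in terms of $n\log_2 n$. The only divergence is how that estimate is obtained. The paper expands $\bigl(\frac{1}{x}-1\bigr)\ln(1-x)$ as a Taylor series to show $(n-1)\ln(n-1) \geq (n-1)\ln n - 1$, whereas you invoke the tangent-line inequality $f(n-1) \geq f(n) - f'(n)$ for the convex function $f(x)=\frac{1}{2}(x\log_2 x - x + 1)$; after clearing denominators these give literally the same intermediate inequality $(n-1)\log_2(n-1) \geq n\log_2 n - \log_2 n - \frac{1}{\ln 2}$, so the constant $\frac{\ln 16 - 1}{\ln 4}$ comes out identically. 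Your route is arguably tidier in that it reuses the convexity computation already carried out in the proof of Theorem~\ref{thm:lbSer} instead of introducing a fresh power-series argument, and you correctly note the two points of care: the tangent-line step needs $n-1>0$, and the case $n=1$ must be checked directly against $\Ser(1)=1$ (the right-hand side there is about $0.78$, so it holds), exactly as the paper does.
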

\begin{proof}
First note that if $0 < x < 1$, then, using the Taylor series for $\ln(1-x)$,
\begin{align*}
\textstyle \left( \frac{1}{x} - 1 \right) \ln(1-x) & \,=\, \left( \frac{1}{x} - 1 \right) \sum_{k=1}^\infty\frac{-x^k}{k} \,=\, \sum_{k=1}^\infty\frac{-x^k}{k} \left( \frac{1}{x} - 1 \right) \\
& \,=\, x-1+\frac{x^2}{2} - \frac{x}{2} + \frac{x^3}{3} - \frac{x^2}{3} + \frac{x^4}{4} - \frac{x^3}{4} + \dots \\
&\,=\, -1 + \sum_{k=1}^\infty \frac{x^k}{k(k+1)} \,\geq\, -1.
\end{align*}

If $n > 1$, then putting $x = \frac{1}{n}$ shows
$$\textstyle (n-1) \ln \! \left( 1-\frac{1}{n} \right) \,=\, (n-1) \ln \! \left( \frac{n-1}{n} \right) \,=\, (n-1) \ln ( n-1 ) - (n-1)\ln ( n ) \,\geq\, -1$$
or, after rearranging and dividing by $\ln 2$,
$$\textstyle (n-1) \log_2 (n-1) \,\geq\, n \log_2 n - \log_2 n - \frac{1}{\ln 2}.$$
Combining this with Theorem~\ref{thm:lbSer}, we get
\begin{align*}
\textstyle \Ser(n) & \,>\, \textstyle \frac{1}{2}(n-1) \log_2 (n-1) - \frac{1}{2}n + 2 \\
& \textstyle \,\geq\, \frac{1}{2} \left( n \log_2 n - \log_2 n - \frac{1}{\ln 2} \right) - \frac{1}{2}n + 2 \\
& \,=\, \textstyle \frac{1}{2}n \log_2 n - \frac{1}{2}n - \frac{1}{2}\log_2 n + \frac{\ln 16 - 1}{\ln 4}
\end{align*}
for all $n > 1$. It is easy to check that this bound also holds for $\Ser(1)=1$.
\end{proof}

Asymptotically, our lower and upper bounds for $\Ser(n)$ and $\Hyp(n)$ differ only by a constant multiple, so we have proved that
$\Ser(n) = \Theta(n \log n)$ and $\Hyp(n) = \Theta(n \log n)$. We conjecture that $\Hyp(n) = n \ln n + o(n \log n)$, so that it is the upper bound, rather than the lower bound, that gives the correct coefficient for the $n \log n$ term. 

For small values of $n$, our upper and lower bounds for $\Ser(n)$ and $\Hyp(n)$ agree, and therefore give us the exact values of these numbers. We record this observation in the next theorem, and compute as well a few other values of $\Ser(n)$ and $\Hyp(n)$.

\begin{theorem}\label{thm:six}
The first six values of $\Ser(n)$ and $\Hyp(n)$ are:
\begin{multicols}{2}
\begin{itemize}
\item[$\circ$] $\Ser(1) = 1$
\item[$\circ$] $\Ser(2) = 2$
\item[$\circ$] $\Ser(3) = 4$
\item[$\circ$] $\Ser(4) = 6$
\item[$\circ$] $\Ser(5) = 9$
\item[$\circ$] $\Ser(6) = 11$

\item[$\circ$] $\Hyp(1) = 1$
\item[$\circ$] $\Hyp(2) = 3$
\item[$\circ$] $\Hyp(3) = 5$
\item[$\circ$] $\Hyp(4) = 8$
\item[$\circ$] $\Hyp(5) = 10$
\item[$\circ$] $\Hyp(6) = 14$
\end{itemize}
\end{multicols}
\end{theorem}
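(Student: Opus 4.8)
The plan is to read off as many of these values as possible from the general bounds already proved, and then to settle by hand the two that those bounds do not determine.

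\emph{Assembling the known bounds.} Theorem~\ref{thm:Ser} gives $\Ser(n+1)\le\Hyp(n)+1$; Theorem~\ref{thm:upper} gives $\Hyp(n)\le\sum_{k=1}^{n}\frac nk=nH_n$, where $H_n$ is the $n$th harmonic number; the corollary to Theorem~\ref{thm:lbSer} gives $\Hyp(n)\ge k_n$, and Theorem~\ref{thm:lbSer} itself gives $\Ser(n)\ge k_{n-1}+1$ for $n>1$; and $\Ser(1)=\Hyp(1)=1$. Since $k_1,\dots,k_6=1,3,5,8,10,13$ and $\lfloor nH_n\rfloor=1,3,5,8,11,14$ for $n=1,\dots,6$, the chain $k_n\le\Hyp(n)\le\lfloor nH_n\rfloor$ already forces $\Hyp(1),\dots,\Hyp(4)$ to their stated values, and then $k_{n-1}+1\le\Ser(n)\le\Hyp(n-1)+1$ forces $\Ser(1),\dots,\Ser(5)$. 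What remains is to decide $\Hyp(5)\in\{10,11\}$ and $\Hyp(6)\in\{13,14\}$; and as $11=k_5+1\le\Ser(6)\le\Hyp(5)+1$, knowing $\Hyp(5)=10$ also forces $\Ser(6)=11$. So it suffices to establish the two inequalities $\Hyp(5)\le 10$ and $\Hyp(6)\ge 14$.

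\emph{The upper bound $\Hyp(5)\le 10$.} I would push the counting from the proof of Theorem~\ref{thm:upper} one step further. Suppose toward a contradiction that $(V,\HH)$ is economical --- harmless by Lemma~\ref{lem:econ2} --- with $|V|=11$ and no partition of size exceeding $5$. As in Theorem~\ref{thm:upper}, the degree-$1$ vertices $D_1$ and $\HH$ form a partition, so $|D_1|\le 5$; and economicalness yields an injection $\HH\hookrightarrow D_1$, so $|\HH|\le 5$, with equality forcing exactly one degree-$1$ vertex in each hyperedge. The remaining (at least six) vertices have degree $\ge 2$. For each $\P\sub\HH$ the set of vertices lying in exactly one member of $\P$ has size $\le 5$; applying this with $|\P|=1,2,3,4,5$ and summing over all $\P$ of each size produces a short system of linear inequalities in the degree-multiplicities of these ``extra'' vertices --- for instance, summing $|E_i\triangle E_j|\le 5$ over all pairs gives $\sum_v d_v(5-d_v)\le 30$, already impossible if every extra vertex has degree $2$ or $3$. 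For $|\HH|\le 4$ this system together with the integrality of $|V|$ already yields $|V|\le 10$ (the worst case $|\HH|=4$ still does). The case $|\HH|=5$ is the genuinely tight one: the summed inequalities alone only give $|V|\le 11$, so one must also use the \emph{individual} constraints $|E_i\triangle E_j|\le 5$ --- which force any two size-$5$ hyperedges to meet in at least three vertices --- to eliminate the last surviving degree patterns. Carrying out this finite but fiddly case analysis, most naturally organized by the degree multiset of the six extra vertices and the intersection pattern of the five hyperedges, is the main obstacle of the proof; alternatively $\Hyp(5)=10$ can be confirmed by a bounded computer search.

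\emph{The lower bound $\Hyp(6)\ge 14$.} The recursive family only supplies $\Hyp(6)\ge k_6=13$, so this inequality requires an explicit example: an economical hypergraph $(V,\HH)$ with $|V|=14$ and no partition of size exceeding $6$. Since $|\HH|\le 6$ by Theorem~\ref{thm:upper}, the search for such an example is a finite one, and I would obtain it either by a short exhaustive search or by a hand construction --- for instance by perturbing the tree hypergraph $(T_6,\B_6)$ of Lemma~\ref{lem:lbSer}, which has exactly $k_6=13$ vertices, while taking care that the perturbation creates neither a seventh degree-$1$ vertex nor a seventh ``exactly-one'' vertex for any $\P\sub\HH$. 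Once the example is exhibited, one verifies the property directly by running over all $\P\sub\HH$; combined with $\Hyp(6)\le\lfloor 6H_6\rfloor=14$ this gives $\Hyp(6)=14$, and therefore $\Ser(6)=11$. Assembling the three parts above yields all twelve values.
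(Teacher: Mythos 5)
Your reduction of the theorem to the two facts $\Hyp(5)\le 10$ and $\Hyp(6)\ge 14$ is exactly right and coincides with the paper's: the chain $k_n\le\Hyp(n)\le\lfloor\sum_{k=1}^n\tfrac nk\rfloor$ and $k_{n-1}+1\le\Ser(n)\le\Hyp(n-1)+1$ pins down everything else, and $\Hyp(5)=10$ then forces $\Ser(6)=11$. The problem is that you do not actually prove either of the two remaining facts, and these are precisely the nontrivial content of the theorem. For $\Hyp(6)\ge 14$ you must exhibit a concrete hypergraph on $14$ vertices with no partition of size greater than $6$ and verify it (the paper gives two explicit such hypergraphs and checks all $2^6$ subsets of hyperedges); ``perturbing $(T_6,\B_6)$'' is not a construction until you say which vertex is added to which hyperedges and confirm the verification goes through. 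For $\Hyp(5)\le 10$ you set up a reasonable counting framework but then concede that it only yields $|V|\le 11$ in the critical case $|\HH|=5$ and that the remaining ``finite but fiddly case analysis'' is ``the main obstacle'' --- i.e., the contradiction is never actually derived. (As a side point, the inequality you quote, $\sum_v d_v(5-d_v)\le 30$, is only correct if the sum ranges over the extra vertices after subtracting the $20$ contributed by the five degree-$1$ vertices from the true bound of $50$; and it eliminates only degree patterns in which every extra vertex has degree $2$ or $3$, leaving the degree-$4$ and degree-$5$ patterns untouched, as you note.)

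For comparison, the paper closes the $\Hyp(5)\le 10$ gap not by summing inequalities over all $\P\sub\HH$ but by a structural chain: every hyperedge has size $\le 5$ and $|\HH|\le 5$; if all hyperedges had size $\le 3$ there would be $\ge 7$ degree-one vertices, so some hyperedge has size $4$ or $5$; a symmetric-difference argument upgrades this to a hyperedge $E$ of size exactly $5$ with $|F\setminus E|\le 2$ for all $F$; restricting to $W=V\setminus E$ one finds two hyperedges $A,B$ whose traces on $W$ are disjoint $2$-sets; and finally either $\{A,B,E\}$ or $\{A,B\}$ yields a partition of size $\ge 6$. Your pairwise-intersection bookkeeping could plausibly be completed along similar lines, but as written the proposal has a genuine gap at both of the steps that the general bounds cannot supply.
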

\begin{proof}
One may easily check that the sequence $k_1,k_2,k_3,\dots$ defined in Lemma~\ref{lem:lbSer} begins $1,3,5,8,10,13,\dots$.
By Theorems~\ref{thm:Ser}, \ref{thm:upper}, and \ref{thm:lbSer}, we know that 
$$k_n+1 \,\leq\, \Ser(n+1) \,\leq\, \Hyp(n)+1$$
for all $n > 1$ and
$$k_n \,\leq\, \Hyp(n) \,\leq\, \sum_{k=1}^{n}\frac{k}{n}$$
for all $n$. 
For $n = 1,2,3,4$ these bounds match (after rounding the upper bound for $\Hyp(n)$ down to the nearest integer), thus proving that 
\begin{multicols}{2}
\begin{itemize}
\item[$\circ$] $\Ser(2) = 2$
\item[$\circ$] $\Ser(3) = 4$
\item[$\circ$] $\Ser(4) = 6$
\item[$\circ$] $\Ser(5) = 9$

\item[$\circ$] $\Hyp(1) = 1$
\item[$\circ$] $\Hyp(2) = 3$
\item[$\circ$] $\Hyp(3) = 5$
\item[$\circ$] $\Hyp(4) = 8$
\end{itemize}
\end{multicols}
\noindent Furthermore, it is easy to see that $\Ser(1) = 1$. Putting $n=5$ into the inequalities above, we get
$$11 \leq \Ser(6) \leq 12 \ \text { and } \ 10 \leq \Hyp(5) \leq 11,$$
and for $n=6$ we get
$$13 \leq \Hyp(6) \leq 14.$$
To finish the proof of the theorem, we must show that $\Hyp(5) \leq 10$ (which implies $\Ser(6) \leq 11$) and that $\Hyp(6) \geq 14$.

To show $\Hyp(6) \geq 14$, it suffices to exhibit a hypergraph on $14$ vertices containing no partitions of size greater than $6$. Here are two such examples:

\vspace{2mm}
\begin{center}
\begin{tikzpicture}[xscale=.72,yscale=.72]

\draw[fill=black] (2,0) circle (3pt);
\draw[fill=black] (4,0) circle (3pt);
\draw[fill=black] (1,1.5) circle (3pt);
\draw[fill=black] (3,1.5) circle (3pt);
\draw[fill=black] (5,1.5) circle (3pt);
\draw[fill=black] (0,3) circle (3pt);
\draw[fill=black] (2,3) circle (3pt);
\draw[fill=black] (4,3) circle (3pt);
\draw[fill=black] (6,3) circle (3pt);
\draw[fill=black] (1,4.5) circle (3pt);
\draw[fill=black] (3,4.5) circle (3pt);
\draw[fill=black] (5,4.5) circle (3pt);
\draw[fill=black] (2,6) circle (3pt);
\draw[fill=black] (4,6) circle (3pt);

\draw[rounded corners,green,thick] (4.3,2) -- (4.3,-.3) -- (4,-.5) -- (3.7,-.2) -- (1.7,2.8) -- (1.7,3.2) -- (1.65,6.5) -- (2,6.6) -- (4.3,3.1) -- (4.3,2);
\draw[rounded corners,orange,thick] (4.2,2) -- (4.2,-.3) -- (4,-.4) -- (3.8,-.2) -- (1.8,2.8) -- (1.8,3.2) -- (3.9,6.5) -- (4.3,6.5) -- (4.2,3.1) -- (4.2,2);
\draw[rounded corners,blue,thick] (2,-.3) -- (4.2,-.3) -- (6.5,3.3) -- (3.8,3.3) -- (1.5,-.3) -- (2,-.3);
\draw[rounded corners,red!40,thick] (2,-.4) -- (1.85,-.4) -- (-.6,3.4) -- (2.15,3.4) -- (4.6,-.4) -- (2,-.4);
\draw[rounded corners,magenta,thick] (1,1.2) -- (5.5,1.2) -- (4.3,3.4) -- (2.3,3.4) -- (1.5,4.5) -- (1.2,4.8) -- (.8,4.85) -- (.6,4.45) -- (1,3.8) -- (1.6,3) -- (.5,1.2) -- (1,1.2);
\draw[rounded corners,brown,thick] (5,1.1) -- (.25,1.1) -- (1.65,3.5) -- (3.65,3.5) -- (4.5,4.5) -- (4.8,4.8) -- (5.2,4.85) -- (5.4,4.45) -- (5,3.8) -- (4.5,3.2) -- (5.7,1.1) -- (5,1.1);

\begin{scope}[shift={(13.5,4)},xscale=.95,yscale=.95]

\draw[fill=black] (-1.5,-.4) circle (3pt);
\draw[fill=black] (-.5,-.4) circle (3pt);
\draw[fill=black] (1.3,.8) circle (3pt);
\draw[fill=black] (-3.3,.8) circle (3pt);
\draw[fill=black] (-1,-3.25) circle (3pt);

\draw[thick,blue] (0,0) circle (65pt);
\draw[white,ultra thick] (2.25,-.47) -- (2.25,.47);
\draw[white,ultra thick] (2.3,-.47) -- (2.3,.47);
\path[draw,blue,thick,use Hobby shortcut,closed=false]
(2.22,.47) .. (3.3,.4) .. (3.5,0) .. (3.3,-.4) .. (2.22,-.47);

\draw[fill=black] (-1,1.2) circle (3pt);
\draw[fill=black] (2.9,0) circle (3pt);
\draw[fill=black] (1.45,2.511) circle (3pt);

\begin{scope}[rotate={60},xscale=1.08,yscale=1.08]
\draw[thick,red!40] (0,0) circle (65pt);
\draw[white,ultra thick] (2.25,-.47) -- (2.25,.47);
\draw[white,ultra thick] (2.3,-.47) -- (2.3,.47);
\path[draw,red!40,thick,use Hobby shortcut,closed=false]
(2.22,.47) .. (3.2,.4) .. (3.4,0) .. (3.2,-.4) .. (2.22,-.47);
\end{scope}

\begin{scope}[shift={(-2,0)}]
\begin{scope}[rotate={120}]

\draw[thick,magenta] (0,0) circle (65pt);
\draw[white,ultra thick] (2.25,-.47) -- (2.25,.47);
\draw[white,ultra thick] (2.3,-.47) -- (2.3,.47);
\path[draw,magenta,thick,use Hobby shortcut,closed=false]
(2.22,.47) .. (3.3,.4) .. (3.5,0) .. (3.3,-.4) .. (2.22,-.47);

\draw[fill=black] (-1,1.2) circle (3pt);
\draw[fill=black] (2.9,0) circle (3pt);
\draw[fill=black] (1.45,2.511) circle (3pt);

\begin{scope}[rotate={60},xscale=1.08,yscale=1.08]
\draw[thick,brown] (0,0) circle (65pt);
\draw[white,ultra thick] (2.25,-.47) -- (2.25,.47);
\draw[white,ultra thick] (2.3,-.47) -- (2.3,.47);
\path[draw,brown,thick,use Hobby shortcut,closed=false]
(2.22,.47) .. (3.2,.4) .. (3.4,0) .. (3.2,-.4) .. (2.22,-.47);
\end{scope}

\end{scope}
\end{scope}

\begin{scope}[shift={(-1,-1.73)}]
\begin{scope}[rotate={240}]

\draw[thick,orange] (0,0) circle (65pt);
\draw[white,ultra thick] (2.25,-.47) -- (2.25,.47);
\draw[white,ultra thick] (2.3,-.47) -- (2.3,.47);
\path[draw,orange,thick,use Hobby shortcut,closed=false]
(2.22,.47) .. (3.3,.4) .. (3.5,0) .. (3.3,-.4) .. (2.22,-.47);

\draw[fill=black] (-1,1.2) circle (3pt);
\draw[fill=black] (2.9,0) circle (3pt);
\draw[fill=black] (1.45,2.511) circle (3pt);

\begin{scope}[rotate={60},xscale=1.08,yscale=1.08]
\draw[thick,green] (0,0) circle (65pt);
\draw[white,ultra thick] (2.25,-.47) -- (2.25,.47);
\draw[white,ultra thick] (2.3,-.47) -- (2.3,.47);
\path[draw,green,thick,use Hobby shortcut,closed=false]
(2.22,.47) .. (3.2,.4) .. (3.4,0) .. (3.2,-.4) .. (2.22,-.47);
\end{scope}

\end{scope}
\end{scope}

\end{scope}

\end{tikzpicture}
\end{center}
\vspace{2mm}

\noindent For each of these hypergraphs, it is not obvious that there are no partitions of size $>\!6$, but it can be checked manually with sufficient patience. For each subset of the hyperedges, one merely has to check that no more than $6$ vertices are contained in exactly one member of the subset. This is trivial for each particular subset of the hyperedges; the only trouble is that there are $2^6 = 64$ cases to check for each hypergraph.

To show $\Hyp(5) \leq 10$, let us suppose (aiming for a contradiction) that $\Hyp(5) \geq 11$. By Lemma~\ref{lem:econ2}, if $\Hyp(5) \geq 11$ then there is an economical hypergraph $(V,\HH)$ with $|V| \geq 11$ containing no partitions of size greater than $5$. 

As in the proof of Theorem~\ref{thm:quadbound}, we may for every $E \in \HH$ find some $v_E \in V$ such that $v_E$ is isolated in $(V,\HH \setminus \{E\})$. Then $\set{v_E}{E \in \HH}$ and $\HH$ form a partition in $(V,\HH)$ of size $|\HH|$. Hence $|\HH| \leq 5$.
Also, every $E \in \HH$ has size $\leq\! 5$, because $E$ and $\{E\}$ form a partition in $(V,\HH)$ of size $|E|$. 

If $|E| \leq 3$ for every $E \in \HH$, then (as there are no isolated points) $|\HH| \leq 5$ and $|V| \geq 11$ imply there are at least $7$ vertices each contained in exactly one member of $\HH$. But then these $7$ or more vertices together with $\HH$ form a partition of size $\geq\! 7$. Thus $\HH$ contains hyperedges of size $4$ or $5$.

In fact, $\HH$ must contain hyperedges of size $5$. To see this, suppose (aiming for a contradiction) that $|E| \leq 4$ for every $E \in \HH$. Using the previous paragraph, fix $E \in \HH$ with $|E| = 4$. Observe that $|F \setminus E| \leq 2$ for all $F \in \HH$: otherwise, we would have both $|F \setminus E| \geq 3$ and (using $|F| \leq 4$, which together with $|F \setminus E| \geq 3$ implies $|E \cap F| \leq 1$) also $|E \setminus F| \geq 3$, in which case taking $X = E \triangle F$ and $\P = \{E,F\}$ would give a partition of size $\geq \! 6$.
Let $W = V \setminus E$. Then $|W| \geq 7$ and (we just showed that) each member of $\HH \setminus \{E\}$ contains at most two points of $W$. As $|\HH \setminus \{E\}| \leq 4$, this implies there are at least $6$ points of $W$ each of which is contained in exactly one member of $\HH \setminus \{E\}$. But then taking $X$ to be this set of $\geq\! 6$ points and $\P = \HH \setminus \{E\}$ gives a partition of size $\geq\! 6$.

Thus $\HH$ contains a hyperedge $E$ of size $5$. As in the previous paragraph, observe that $|F \setminus E| \leq 2$ for all $F \in \HH$: otherwise, we would have both $|F \setminus E| \geq 3$ and $|E \setminus F| \geq 3$, in which case $E \triangle F$ and $\{E,F\}$ would form a partition of size $\geq\! 6$. 

Let $W = V \setminus E$ and let $\G = \set{F \cap W}{F \in \HH \setminus \{E\}}$. Note that $|\G| \leq 4$ and that the hypergraph $(W,\G)$ has no isolated points. Also $|W| = |V| - |E| \geq 6$, and by the previous paragraph, each member of $\G$ has size $\leq\! 2$. $\G$ cannot consist of singletons because $|\G| < |Y|$; thus there is some $G \in \G$ with $|G| = 2$. 

Similarly, let $X = W \setminus G$ and let $\F = \set{F \setminus G}{F \in \G \setminus \{G\}}$. Then, arguing as in the previous paragraph, $|X| \geq 4$ and $|\F| \leq 3$, so $\F$ cannot consist of singletons. Thus there is some $F \in \F$ such that $|F| = 2$. But since the members of $\G$ all have size $\leq\! 2$, this implies $F \in \G$, and that $G \cap F = \0$.

Thus there are $G,F \in \G$ with $|G| = |F| = 2$ and $G \cap F = \0$.
By the definition of $\G$, this means that there are $A,B \in \HH$ such that $|A \setminus E| = |B \setminus E| = 2$ and $(A \setminus E) \cap (B \setminus E) = \0$. 
 
\vspace{1mm}
\begin{center}
\begin{tikzpicture}[xscale=.35,yscale=.35]

\draw[fill=black] (0,0) circle (4pt);
\draw[fill=black] (4,0) circle (4pt);
\draw[fill=black] (8,0) circle (4pt);
\draw[fill=black] (12,0) circle (4pt);
\draw[fill=black] (16,0) circle (4pt);
\draw[fill=black] (2,4) circle (4pt);
\draw[fill=black] (6,4) circle (4pt);
\draw[fill=black] (10,4) circle (4pt);
\draw[fill=black] (14,4) circle (4pt);

\draw[rounded corners,blue,thick] (0,-1) -- (17,-1) -- (17,1) -- (-1,1) -- (-1,-1) -- (0,-1);
\draw[rounded corners,red,thick] (1,1.2) -- (1,5) -- (7,5) -- (7,1.2);
\draw[rounded corners,green,thick] (9,1.2) -- (9,5) -- (15,5) -- (15,1.2);
\node[blue] at (6,0) {\small $E$};
\node[red] at (4,2.5) {\small $A$};
\node[green] at (12,2.5) {\small $B$};

\end{tikzpicture}
\end{center}
\vspace{1mm}

If $|E \setminus (A \cup B)| \geq 2$, then taking $\P = \{A,B,E\}$ and
$$X = (E \setminus (A \cup B)) \cup (A \setminus E) \cup (B \setminus E)$$
gives a partition of size $\geq\! 6$. Thus $|E \setminus (A \cup B)| \leq 1$. But recall that $|A|, |B| \leq 5$, so this implies that
$$A \setminus (E \cap B) \neq \0 \qquad \text{ and } \qquad B \setminus (E \cap A) \neq \0,$$
which means 
$A \triangle B$
has size $\geq\! 6$. But then $A \triangle B$ and $\{A,B\}$ form a partition of size $\geq\! 6$.
Hence $\Hyp(5) \leq 10$.
\end{proof}

One question left open by the previous proof is whether $\Ser(7)$ is equal to $14$ or to $15$. (Using the bounds stated in the proof, it must be one of these.)
Either answer would be interesting. If $\Ser(7) = 14$, then we would know that the upper bound $\Ser(n) \leq \Hyp(n-1)+1$ can be strict. If $\Ser(7) = 15$, then we would know that the lower bound $\Ser(n) \geq k_{n-1}+1$ can be strict. At the moment we do not know that either of these inequalities can be strict, because there are no cases where we can compute $\Ser(n)$, except where $n=1$ or where (as in the previous proof) our upper and lower bounds match.

We suspect that both inequalities can be strict. Concerning the question of whether the bound $\Ser(n) \geq k_{n-1}+1$ can be strict, let us remark that even if the families of functions constructed in the proof of Lemma~\ref{lem:lbSer} are optimal in size, they are not unique. For example, the picture following Definition~\ref{def:full} gives an example of a $\Ser(3)$-bounding family $\F$ that is different from the one constructed from $T_2$ in the proof of Lemma~\ref{lem:lbSer}. Similarly, the following picture shows a $\Ser(4)$-bounding family that seems completely unrelated to the one constructed from $T_3$:

\vspace{5mm}
\begin{center}
\begin{tikzpicture}[xscale=1.5,yscale=1.5]

\draw[rounded corners,thick] (0,0) rectangle (.5,2) {};
\node at (.25,.25) {\scriptsize $n$};
\node at (.25,1.75) {\scriptsize $p$};
\draw[rounded corners,thick] (1.5,0) rectangle (2,2) {};
\node at (1.75,.25) {\scriptsize $n$};
\node at (1.75,1.75) {\scriptsize $p$};
\draw[rounded corners,thick] (-1.75,-1) rectangle (-1.25,1) {};
\node at (-1.5,-.75) {\scriptsize $n$};
\node at (-1.5,.75) {\scriptsize $p$};
\draw[rounded corners,thick] (3.25,1) rectangle (3.75,3) {};
\node at (3.5,1.25) {\scriptsize $n$};
\node at (3.5,2.75) {\scriptsize $p$};
\draw[rounded corners,thick] (0,3) rectangle (2,3.5) {};
\node at (.25,3.25) {\scriptsize $n$};
\node at (1.75,3.25) {\scriptsize $p$};

\node at (.25,.95) {\small $1$};
\node at (1.75,.95) {\small $2$};
\node at (1,3.25) {\small $3$};
\node at (-1.5,0) {\small $4$};
\node at (3.5,2) {\small $5$};

\path[draw,red!40,thick,use Hobby shortcut,closed=true]
(.05,1.75) .. (.25,1.95) .. (1,1.95) .. (1.75,1.95) .. (3.5,2.95) .. (3.7,2.7) .. (3.45,2.5) .. (1.75,1.5) .. (1,1.5) .. (.25,1.5);

\path[draw,magenta,thick,use Hobby shortcut,closed=true]
(1.95,.25) .. (1.75,.05) .. (1,.05) .. (.25,.05) .. (-1.5,-.95) .. (-1.7,-.75) .. (-1.5,-.55) .. (.25,.45) .. (1,.45) .. (1.75,.45);

\path[draw,brown,thick,use Hobby shortcut,closed=true]
(2.2,.25) .. (1.75,-.15) .. (1,-.25) .. (.25,-.1) .. (-1.5,.55) .. (-1.7,.75) .. (-1.5,.95) .. (.25,.75) .. (1,.7) .. (1.75,.6);

\path[draw,blue,thick,use Hobby shortcut,closed=true]
(-.2,1.75) .. (.25,2.15) .. (1,2.25) .. (1.75,2.1) .. (3.5,1.45) .. (3.7,1.25) .. (3.5,1.05) .. (1.75,1.35) .. (1,1.3) .. (.25,1.4);

\path[draw,orange,thick,use Hobby shortcut,closed=true]
(.25,-.2) .. (.7,.25) .. (1.2,.8) .. (2.15,1.75) .. (2,2.5) .. (2.2,3.25) .. (1.75,3.7) .. (1.3,3.25) .. (1.5,2.7) .. (1.5,1.75) .. (.5,.7) .. (0,.5);

\path[draw,green,thick,use Hobby shortcut,closed=true]
(1.75,-.2) .. (1.3,.25) .. (.8,.8) .. (-.15,1.75) .. (0,2.5) .. (-.2,3.25) .. (.25,3.7) .. (.7,3.25) .. (.5,2.7) .. (.5,1.75) .. (1.5,.7) .. (2,.5);

\end{tikzpicture}
\end{center}
\vspace{5mm}

\section{Conditionally convergent series}\label{sec:series}

In this section we apply the results of Sections~\ref{sec:reduction} and \ref{sec:combinatorics1} to prove the second theorem stated in the introduction.

In what follows, $\bar a$ always denotes a sequence $\seq{a_n}{n \in \N}$ of real numbers; similarly, $\bar a^i$ always denotes a sequence $\seq{a^i_n}{n \in \N}$.
If $\bar a$ is a sequence of real numbers and $A \sub \N$, then $\sum(\bar a,A)$ denotes the subseries $\sum_{n \in A}a_n$.

\begin{definition}\label{def:ser}
For each $n \in \N$, let $\ser(n)$ denote the least $k \in \N$ with the following property:
\begin{itemize}
\item[$(\ddagger)_n$] For any $k$ conditionally convergent series, there is some $A \sub \N$ sending at least $n$ of those series to infinity.
\end{itemize}
If there is no such $k \in \N$, then we say that $\tilde \Ser(n)$ is not well-defined.
\end{definition}

\begin{theorem}\label{thm:main}
The number $\ser(n)$ is well-defined for each $n \in \N$. Furthermore, $\ser(n) \leq \Ser(n)$ for all $n$.
\end{theorem}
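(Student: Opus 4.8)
The plan is to deduce Theorem~\ref{thm:main} from the Main Theorem by attaching to each conditionally convergent series a pair of incompatible ideals on $\N$, arranged so that a set choosing between the two ideals is \emph{forced} to send the series to infinity. Note that we will only need this one implication; the converse can fail, since a set $A$ on which both the positive and negative parts of $\bar a$ diverge may nonetheless have $\sum(\bar a,A)=\pm\infty$ for a suitably arranged $\bar a$.

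Given a conditionally convergent series $\bar a = \seq{a_n}{n\in\N}$, write $a_n^+ = \max(a_n,0)$ and $a_n^- = \max(-a_n,0)$, so that $a_n = a_n^+ - a_n^-$, and set
\[
\I_{\bar a} \,=\, \set{A\sub\N}{\textstyle\sum_{n\in A}a_n^+ < \infty}
\qquad\text{and}\qquad
\J_{\bar a} \,=\, \set{A\sub\N}{\textstyle\sum_{n\in A}a_n^- < \infty}.
\]
The first step is to verify that these are proper ideals on $\N$. The ideal axioms are immediate: a sum of non-negative reals is finite on any finite set, is monotone under inclusion, and is subadditive under unions. Properness uses conditional convergence, via the standard fact that $\sum_n a_n^+ = \sum_n a_n^- = \infty$ (if either were finite, the other would be too, since $\sum_n a_n$ converges; but then $\sum_n|a_n|$ would converge). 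Incompatibility is witnessed by $A = \set{n\in\N}{a_n\leq 0}$: then $a_n^+=0$ for $n\in A$, so $A\in\I_{\bar a}$, and $a_n^-=0$ for $n\in\N\setminus A$, so $\N\setminus A\in\J_{\bar a}$.

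The heart of the argument is the implication: if $A$ chooses between $\I_{\bar a}$ and $\J_{\bar a}$, then $A$ sends $\bar a$ to infinity. Suppose $A\in\I_{\bar a}\setminus\J_{\bar a}$; then $\sum_{n\in A}a_n^+ < \infty$ while $\sum_{n\in A}a_n^- = \infty$. Writing the $N$th partial sum of $\sum(\bar a,A)$ as $\sum_{n\in A,\,n\leq N}a_n^+ - \sum_{n\in A,\,n\leq N}a_n^-$, the first term converges and the second tends to $+\infty$, so $\sum(\bar a,A) = -\infty$. The case $A\in\J_{\bar a}\setminus\I_{\bar a}$ is symmetric and gives $\sum(\bar a,A) = +\infty$. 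Either way, $A$ sends $\bar a$ to infinity.

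Finally, to prove the theorem, let $\bar a^1,\dots,\bar a^k$ be any $k = \Ser(n)$ conditionally convergent series, and consider the $k$ pairs of incompatible ideals $(\I_{\bar a^i},\J_{\bar a^i})$ on $\N$. Since $\Ser(n)$ is well-defined (Section~\ref{sec:combinatorics1}), property $(*)_n$ holds for $k=\Ser(n)$, so there is some $A\sub\N$ choosing between at least $n$ of these pairs; by the implication above, this $A$ sends at least $n$ of the series $\bar a^i$ to infinity. Hence $(\ddagger)_n$ holds with $k=\Ser(n)$, which shows both that $\ser(n)$ is well-defined and that $\ser(n)\leq\Ser(n)$. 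I expect no serious obstacle here: the only points requiring a little care are the routine verification that $\I_{\bar a}$ and $\J_{\bar a}$ are proper, and phrasing the key implication in terms of \emph{partial} sums rather than unordered sums, since that is exactly where the (one-directional) link between ``choosing'' and ``sending to infinity'' lives.
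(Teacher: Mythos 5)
Your proposal is correct and follows essentially the same route as the paper: the ideals $\I_{\bar a}$ and $\J_{\bar a}$ you define coincide with the paper's $\I_\ell$ and $\J_\ell$ (convergence of the subseries of positive, respectively negative, terms over $A$), and the reduction to property $(*)_n$ for $k=\Ser(n)$ is exactly the paper's argument. The only difference is that you spell out the verifications (properness, incompatibility, and the partial-sum argument for the key implication) that the paper dismisses as ``not difficult to see.''
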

\begin{proof}
Let $k \geq \Ser(n)$ and let $\{\bar a^1, \bar a^2, \dots, \bar a^k\}$ be a collection of $k$ conditionally convergent series. For each $\ell \in \{1,2,\dots,k\}$, define the following two ideals:
$$\textstyle \I_\ell = \set{A \sub \N}{\sum(\bar a^i,\set{n \in A}{a_n > 0}) \text{ converges}},$$
$$\textstyle \J_\ell = \set{A \sub \N}{\sum(\bar a^i,\set{n \in A}{a_n < 0}) \text{ converges}}.$$
It is not difficult to see that $\I_\ell$ and $\J_\ell$ are incompatible ideals on $\N$ for every $\ell \leq k$. 
It also is not difficult to see that if $A \in \I_\ell \setminus \J_\ell$, then $\sum(\bar a^\ell,A) = -\infty$, and if $A \in \J_\ell \setminus \I_\ell$, then $\sum(\bar a^\ell,A) = \infty$. Hence if $A$ chooses between $\I_\ell$ and $\J_\ell$, then $A$ sends $\bar a^\ell$ to infinity. As $k \geq \Ser(n)$, there is some $A \sub \N$ that chooses between $\I_\ell$ and $\J_\ell$ for at least $n$ distinct values of $\ell$; hence there is some $A \sub \N$ that sends at least $n$ of these $k$ series to infinity.
\end{proof}

The inequality stated in the theorem above can be strict:
It is proved in \cite{Brian} is that $\ser(3) = 3$, but we saw in Theorem~\ref{thm:six} that $\Ser(3) = 4$.

The inequality $\ser(n) \leq \Ser(n)$ gives us an $n \log n$-type upper bound for $\ser(n)$ via the results of Section~\ref{sec:combinatorics1}, namely $\ser(n) \leq n \ln n + \g n - \ln n +\frac{3}{2} - \g$. Our next theorem provides a nontrivial lower bound for $\ser(n)$.

\begin{theorem}\label{thm:lbser}
$\ser(n) \geq 2n-5$ for every $n \in \N$.
\end{theorem}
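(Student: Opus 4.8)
The plan is to derive the bound from a single scalable construction, postponing the real content to it: \emph{for every $m \geq 2$ there are $2m$ conditionally convergent series such that no $A \sub \N$ sends more than $m+2$ of them to infinity}. Granting this, the theorem follows at once. For $n \leq 5$ there is nothing to do, since trivially $\ser(n) \geq n$ (with fewer than $n$ series one cannot send $n$ of them anywhere, so property $(\ddagger)_n$ fails for every $k < n$). For $n \geq 6$, apply the construction with $m = n-3 \geq 3$: the resulting family of $2(n-3) = 2n-6$ conditionally convergent series admits no $A$ sending more than $m+2 = n-1$ of them to infinity, hence none sending $n$ or more; so $(\ddagger)_n$ fails for $k = 2n-6$, and therefore $\ser(n) \geq 2n-5$.

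So the work is the construction, which should generalize Nazarov's four-series example (the case $m = 2$). I would build the $2m$ series on a block structure: partition $\N$ into consecutive finite blocks $B_1 \sqcup B_2 \sqcup \cdots$ with $|B_k| \to \infty$, and place on each $B_k$ a scaled copy of a finite ``gadget'' whose entries all have magnitude $\e_k$, with $\e_k|B_k| \to 0$ but $\sum_k \e_k|B_k| = \infty$, so that each of the $2m$ series converges (each block contributes $\approx 0$) yet remains only conditionally convergent. The gadget is the crux: it must couple the $2m$ series so tightly that the net contribution $\big(\sum_{i \in A \cap B_k} a^1_i, \dots, \sum_{i \in A \cap B_k} a^{2m}_i\big)$ realizable on block $B_k$ by any $A$ is confined to a small (low-dimensional) region whose orientation cycles through the $2m$ coordinate directions as $k$ grows, and is organized around $m$ conjugate pairs of series so that on each block $A$ is forced to trade one pair off against another. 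Nazarov's example already carries this two-dimensional flavour; the genuine difficulty is finding the gadget that keeps the trade-off alive for all $m$, rather than letting $A$ ``aim'' simultaneously at all $2m$ series.

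With such a gadget, the key lemma is: for every $A \sub \N$, at most $m+2$ of the subseries $\sum(\bar a^\ell, A)$ diverge to $\pm\infty$. The strategy is to follow the partial-sum vector $S_N(A) = \big(\sum_{i \leq N,\, i \in A} a^\ell_i\big)_{\ell \leq 2m}$ along the blocks. Since on each block $A$'s contribution lies in that block's rotating contribution region, $S_{N'}(A) - S_N(A)$ for $N' > N$ lies in the cone generated by all these regions, so $S_N(A)$ is ``cone-monotone''; and a coordinate $\ell$ can tend to $\pm\infty$ only if its component of $S_N(A)$ is eventually signed-monotone, which against the rotating geometry is possible for all but at most $m+2$ of the $\ell$ (the ``$+2$'' being the two boundary directions of whichever half-space $S_N(A)$ eventually marches into). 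The remaining $\geq m-2$ coordinates are pinned to converge or to oscillate, because the blocks aligned against those directions keep shoving the corresponding partial sums back and forth.

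The main obstacle I expect is the oscillation half of this argument: ruling out that a cleverly coordinated $A$ turns a subseries that ``ought to'' oscillate into one that crawls monotonically to $\pm\infty$ (say by letting the blocks favourable to that direction dominate, in total mass, the unfavourable ones). The remedy is to let the block masses $\e_k|B_k|$ grow fast enough that within any infinite set of blocks the single largest block's contribution exceeds the sum of all earlier ones; then mixed-sign block contributions genuinely force unbounded oscillation rather than signed divergence, and a finite case analysis of which families of directions $A$ can ``commit to'' simultaneously caps the count at $m+2$. The remaining tasks — verifying conditional convergence, the positive/negative-part bookkeeping for the concrete gadget, and that the $m=2$ instance recovers a (possibly weakened) form of Nazarov's example — should be routine by comparison.
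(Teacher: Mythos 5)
Your top-level reduction is correct and is in fact exactly the paper's: one exhibits, for each $m$, a family of $2m$ conditionally convergent series such that no single $A \sub \N$ sends more than $m+2$ of them to infinity; this falsifies $(\ddagger)_{m+3}$ for $k = 2m$ and yields $\ser(m+3) \geq 2m+1 = 2(m+3)-5$, with small $n$ handled by the trivial bound $\ser(n) \geq n$. But that reduction is a two-line observation, and everything you write after it is a description of what a construction \emph{ought} to look like rather than a construction. You say yourself that the gadget is ``the crux'' and that ``the genuine difficulty is finding the gadget,'' and you never exhibit one. The verification sketch attached to the hypothetical gadget is not an argument: the assertion that ``cone-monotonicity'' against a ``rotating geometry'' permits signed divergence for at most $m+2$ coordinates, with the $+2$ attributed to ``the two boundary directions of whichever half-space $S_N(A)$ eventually marches into,'' identifies no half-space, gives no reason the count is $m+2$ rather than some other function of $m$, and does not address an adversarial $A$ that makes different trade-offs on different blocks. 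As submitted, the proposal establishes nothing beyond $\ser(n) \geq n$; the gap is the entire content of the theorem.

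For the record, the gadget the paper uses is more rigid than the low-dimensional ``contribution region'' you envision, and its mechanism is arithmetic rather than geometric. Partition $\N$ into intervals $I_1, I_2, \dots$ with $|I_\mu| = b_\mu$ even and $b_{\mu+1} \geq \mu^3(b_1 + \dots + b_\mu)$. On a block $I_\mu$ with $\mu \equiv j$ (mod $m$), each of the first $m$ series except the $j$th alternates $+\tfrac{1}{\mu}, -\tfrac{1}{\mu}$ along the block, the $j$th alternates with the opposite phase, and among the last $m$ series only the $(m+j)$th is nonzero there, alternating $\pm\tfrac{1}{b_\mu}$. Sending series $m+j$ to $+\infty$ forces $A$, on infinitely many blocks $I_\mu$ with $\mu \equiv j$, to contain about $b_\mu/\mu^2$ more odd than even positions; since $b_\mu$ dwarfs the total mass of all earlier blocks, such an imbalance drives the partial sums of series $j$ below $0$ and those of every other series $i \leq m$ above $0$ at the end of that block, infinitely often. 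Hence $m+j \in A^+$ forbids $j \in A^+$ and forbids $i \in A^-$ for every other $i \leq m$ (and symmetrically with signs reversed), after which a short counting argument shows at most two indices $j$ can have both $j$ and $m+j$ divergent, giving the $m+2$ bound. The ``small'' series on each residue class acts as a detector whose divergence polarizes all the ``large'' series at once; that coupling, not a cone argument, is what you would need to supply.
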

\begin{proof}
To prove this theorem, we construct for every $n \in \N$ a collection of $2n$ conditionally convergent series, such that no more than $n+2$ of the series can be sent to infinity simultaneously. This shows that $\ser(n+3) > 2n$, or equivalently that $\ser(n) \geq 2n-5$, for every $n$. The proof expands on an idea of Nazarov \cite{Nazarov} presented in \cite[Section 3]{Brian}, where the bound $\ser(3) \geq 4$ is proved. The presentation here follows \cite[Section 3]{Brian} as closely as possible.

Fix $n \in \N$. Partition $\N$ into adjacent intervals $I_1, I_2, I_3, \dots$ (the lengths of which will be specified later in the proof). For each $m \geq 0$, let $b_m$ denote the length of the interval $I_m$.
The function $m \mapsto b_m$ is rapidly increasing (and just how rapidly it should increase is specified below). For convenience, we shall take each $b_m$ to be an even number, so that the first member of every interval $I_m$ is an odd number.

We now define our collection of $2n$ series (modulo the as-yet-undefined sequence of $b_m$'s) by specifying the terms of each one on each of the intervals $I_m$. Let us denote the series by $\bar a^1, \bar a^2, \dots, \bar a^{2n}$, and write $\bar a^i = \seq{a^i_k}{k \in \N}$. Given $m = \ell n+j$, $0 \leq j < n$, define $a^i_k$ on the interval $I_m$ as follows:
\begin{itemize}
\item[$\circ$] If $i \leq n$ and $i \neq j$, then $a_k^i = \frac{1}{m}$ for odd $k$ and $a_k^i = -\frac{1}{m}$ for even $k$.
\item[$\circ$] $a_k^j = -\frac{1}{m}$ for odd $m$ and $a_k^j = \frac{1}{m}$ for even $m$.
\item[$\circ$] If $i \geq n$ and $i \neq n+j$, then $a_k^i = 0$ for all $k$.
\item[$\circ$] $a_k^{n+j} = -\frac{1}{b_m}$ for odd $k$ and $a_k^{n+j} = \frac{1}{b_m}$ for even $k$.
\end{itemize}
Explicitly, our $2n$ series look like this on $I_m$ when $m \equiv j$ (modulo $n$):
\renewcommand*{\arraystretch}{2}
$$\begin{matrix}

\text{ series } 1: \ \, & \textstyle + \frac{1}{m} \ & \textstyle - \frac{1}{m} \ & \textstyle + \frac{1}{m} \ & \textstyle - \frac{1}{m} \ & \textstyle + \frac{1}{m} \ & \textstyle - \frac{1}{m} \ & \textstyle + \frac{1}{m} \ & \textstyle - \frac{1}{m} & \ \dots \\

\text{ series } 2: \ \, & \textstyle + \frac{1}{m} \ & \textstyle - \frac{1}{m} \ & \textstyle + \frac{1}{m} \ & \textstyle - \frac{1}{m} \ & \textstyle + \frac{1}{m} \ & \textstyle - \frac{1}{m} \ & \textstyle + \frac{1}{m} \ & \textstyle - \frac{1}{m} & \ \dots  \\

\vdots & \vdots & \vdots & \vdots & \vdots & \vdots & \vdots & \vdots & \vdots \\

\text{ series } j: \ \, & \textstyle - \frac{1}{m} \ & \textstyle + \frac{1}{m} \ & \textstyle - \frac{1}{m} \ & \textstyle + \frac{1}{m} \ & \textstyle - \frac{1}{m} \ & \textstyle + \frac{1}{m} \ & \textstyle - \frac{1}{m} \ & \textstyle + \frac{1}{m} & \ \dots  \\

\vdots & \vdots & \vdots & \vdots & \vdots & \vdots & \vdots & \vdots & \vdots \\

\text{ series } n: \ \, & \textstyle + \frac{1}{m} \ & \textstyle - \frac{1}{m} \ & \textstyle + \frac{1}{m} \ & \textstyle - \frac{1}{m} \ & \textstyle + \frac{1}{m} \ & \textstyle - \frac{1}{m} \ & \textstyle + \frac{1}{m} \ & \textstyle - \frac{1}{m} & \ \dots  \\

\text{ series } n\!+\!1: \ \, & \textstyle + 0 \ & \textstyle + 0 \ & \textstyle + 0 \ & \textstyle + 0 \ & \textstyle + 0 \ & \textstyle + 0 \ & \textstyle + 0 \ & \textstyle + 0 & \ \dots  \\

\text{ series } n\!+\!2: \ \, & \textstyle + 0 \ & \textstyle + 0 \ & \textstyle + 0 \ & \textstyle + 0 \ & \textstyle + 0 \ & \textstyle + 0 \ & \textstyle + 0 \ & \textstyle + 0 & \ \dots  \\

\vdots & \vdots & \vdots & \vdots & \vdots & \vdots & \vdots & \vdots & \vdots \\

\text{ series } n\!+\!j: \ \, & \textstyle + \frac{1}{b_m} \, & \textstyle - \frac{1}{b_m} \, & \textstyle + \frac{1}{b_m} \, & \textstyle - \frac{1}{b_m} \, & \textstyle + \frac{1}{b_m} \, & \textstyle - \frac{1}{b_m} \, & \textstyle + \frac{1}{b_m} \, & \textstyle - \frac{1}{b_m} & \ \dots  \\

\vdots & \vdots & \vdots & \vdots & \vdots & \vdots & \vdots & \vdots & \vdots \\

\text{ series } 2n: \ \, & \textstyle + 0 \ & \textstyle + 0 \ & \textstyle + 0 \ & \textstyle + 0 \ & \textstyle + 0 \ & \textstyle + 0 \ & \textstyle + 0 \ & \textstyle + 0 & \ \dots 

\end{matrix}$$
Assuming $\lim_{m \to \infty}b_m = \infty$, it is clear that each of these series converges conditionally to $0$.

Before proceeding with a detailed proof of why these $2n$ series have the stated property, we describe the idea behind it; this paragraph can be omitted by readers who just want the detailed proof. Suppose $A \sub \N$ sends the series $\bar a^{n+1}$ to infinity. Because this series only has nonzero terms on blocks of the form $I_{\ell n+1}$, we must have $A \cap I_{\ell n+1} \neq \0$ for infinitely many $\ell$. In fact, we can say more: if $\sum(\bar a^{n+1},A) = \infty$, then $A \cap I_{\ell n+1}$ must contain ``significantly more'' odds than evens, for infinitely many $\ell$. This has an effect on the series $\bar a^1, \bar a^2, \dots, \bar a^n$. Specifically, we must include ``significantly more'' positive than negative terms in the series $\bar a^2, \bar a^3, \dots, \bar a^n$ on the block $I_{\ell n+1}$, and we must include ``significantly more'' negative than positive terms in the series $\bar a^1$ on the block $I_{\ell n+1}$. By making $b_{\ell n+1}$ large enough, we can ensure that including ``significantly more'' positive terms than negative will force the partial sum of a the series $\bar a^2, \bar a^3, \dots, \bar a^n$ to be above $0$ by the end of block $I_{\ell n+1}$, and including ``significantly more'' negative terms than positive will force the partial sum of a the series $\bar a^1$ to be below $0$ at the end of block $I_{\ell n+1}$. Thus having $\sum(\bar a^{n+1},A) = \infty$ forces the partial sums of $\sum(\bar a^{2},A), \dots, \sum(\bar a^{n},A)$ to be positive infinitely often, but it forces the partial sums of $\sum(\bar a^{1},A)$ to be negative infinitely often. Thus, if $\sum(\bar a^{n+1},A) = \infty$, then we cannot have $\sum(\bar a^1,A) = \infty$, and we cannot have $\sum(\bar a^i,A) = -\infty$ for any $2 \leq i \leq n$. Similarly, if $\sum(\bar a^{n+1},A) = -\infty$, then we cannot have $\sum(\bar a^1,A) = -\infty$, and we cannot have $\sum(\bar a^i,A) = \infty$ for any $2 \leq i \leq n$. In other words, if $A$ sends $\bar a^{n+1}$ to infinity, and also sends $\bar a^1$ to infinity and (some of the) $\bar a^i$ as well for $2 \leq i \leq n$, then $\sum(\bar a^1,A)$ must be different from all of the $\sum(\bar a^i,A)$ for $2 \leq i \leq n$; i.e., if the one is $\infty$, then the others are $-\infty$, and vice versa. 
Similarly, sending any $\bar a^{n+j}$ to infinity (where $1 \leq j \leq n$), along with $\bar a^j$ and (some of the) $\bar a^i$ for $1 \leq i \leq n$, $i \neq j$, forces $\sum(\bar a^j,A)$ to be different from all of the $\sum(\bar a^i,A)$ for $1 \leq i \leq n$, $i \neq j$. 
But of course, everyone cannot be ``different'' at the same time. If $\ell$ of the series $\bar a^{n+1}, \bar a^{n+2}, \dots, \bar a^{2n}$ are sent to infinity by some $A \sub \N$, then at most $n+2-\ell$ of the series $\bar a^1, \bar a^2, \dots, \bar a^n$ are sent to infinity by $A$.

We will employ the following notation: given $A \sub \N$, let
\begin{itemize}
\item[$\circ$] $A^+ = \set{i \in \{1,2,\dots,2n\}\vphantom{f^{f^2}}}{\sum(\bar a^i,A) = \infty}$,
\item[$\circ$] $A^- = \set{i \in \{1,2,\dots,2n\}\vphantom{f^{f^2}}}{\sum(\bar a^i,A) = -\infty}$,$\vphantom{f^{f^{f^{f^2}}}}$
\item[$\circ$] $A^\vee = A^+ \cup A^-$.$\vphantom{f^{f^f}}$
\end{itemize}
Our goal is to show $|A^\vee| \leq n+1$ for all $A \sub \N$.

Let $b_1, b_2, \dots, b_m, \dots$ be an increasing sequence of even numbers, with $b_1 = 2$, satisfying the following recurrence relation:
$$b_{m+1} \, \geq \, m^3 (b_1 + b_2 + \dots + b_m)$$
Consider the $2n$ series defined above in terms of the $b_m$, and let $A \sub \N$.

\begin{claim}
Let $i \in \{1,2,\dots,n\}$.
\begin{itemize}
\item[$\circ$] If $n+i \in A^+$, then $i \notin A^+$ and $j \notin A^-$ for all $j \in \{1,2,\dots,n\} \setminus \{i\}$.
\item[$\circ$] If $n+i \in A^-$, then $i \notin A^-$ and $j \notin A^+$ for all $j \in \{1,2,\dots,n\} \setminus \{i\}$.
\end{itemize}
\end{claim}

\begin{proof}[Proof of claim]
Suppose $n+i \in A^+$. For each $m$, let $\Delta(m)$ denote the imbalance of odd terms over even terms in $A$ from block $m$:
\begin{align*}
\Delta(m) \,=\, & \card{\set{\ell \in A \cap I_{m}}{\ell \text{ is odd}}} - \card{\set{\ell \in A \cap I_{m}}{\ell \text{ is even}}}.
\end{align*}
Observe that we may use the quantity $\Delta(m)$ to compute the sum of our $(n+i)^{\mathrm{th}}$ subseries on the $m^{\mathrm{th}}$ block: if $m \equiv j$ (modulo $n$), then
\begin{align*}
\textstyle \sum (\bar a^{n+i},A \cap I_m) =
\begin{cases}
\frac{\Delta(m)}{b_m} & \text{ if } j=i \\
0 & \text{ if } j \neq i.
\end{cases}
\end{align*}
It follows that $\Delta(m) > \nicefrac{b_m}{m^2}$ for infinitely many $m \equiv i$ (modulo $n$) because, if not, then 
$$\textstyle \sum(\bar a^{n+i},A) \,=\, \sum_{m=0}^\infty  \sum(\bar a^{n+i},A \cap I_m) $$
cannot grow fast enough to sum to $\infty$. More precisely, if there were some $M$ such that $\Delta(\ell n+i) \leq \nicefrac{b_{\ell n+i}}{(\ell n+i)^2}$ for every $\ell \geq M$, then
\begin{align*}
\textstyle \sum (\bar a^{n+i},A \cap [Mn,\infty)) & \,=\, \textstyle \sum_{m = Mn}^\infty \sum (\bar a^{n+i},A \cap I_m)  \\
&  \,= \textstyle  \, \sum_{\ell = M}^\infty \frac{\Delta(\ell n+i)}{b_{\ell n+i}} \, \leq \, \sum_{\ell = M}^\infty \frac{1}{(\ell n+i)^2} \, < \, \infty,
\end{align*}
which shows that $\sum (\bar a^{n+i},A)$ converges on a tail, contradicting the assumption that $n+i \in A^+$. Thus, for infinitely many $m \equiv i$ (modulo $n$),
$$\Delta(m) \, > \, \frac{b_m}{m^2} \, \geq \, m(b_1+b_2+\dots+b_{m-1}).$$

Now consider the $i^\mathrm{th}$ subseries $\sum(\bar a^i,A) = \sum_{\ell = 0}^\infty \sum (\bar a^i,A \cap I_\ell)$. 
By the definition of $\bar a^i$, if $m \equiv i$ (modulo $n$), then
$$\textstyle \sum (\bar a^i,A \cap I_m) \, = \, \frac{-\Delta(m)}{m},$$
and in particular, if $\Delta(m) \, > \, m(b_1+b_2+\dots+b_{m-1})$ then
$$\textstyle \sum (\bar a^i,A \cap I_m) \, = \, \frac{-\Delta(m)}{m} \,<\, - b_1-b_2-\dots-b_{m-1}.$$
This negative sum is greater in absolute value than all the preceding terms of the subseries combined. To see this, note that $|a^i_k| = \frac{1}{j}$ whenever $k \in I_{j}$, which implies $\card{\sum (\bar a^i,A \cap I_{j})} \leq |I_{j}|\frac{1}{j} =  \frac{b_{j}}{j}$; thus
\begin{align*}
\textstyle \card{\sum (\bar a^i,A \cap [1,\min I_m))} \, & \leq \, \textstyle \sum_{j < m}\card{\sum (\bar a^i,A \cap I_j)} \\
& \leq \, \textstyle \sum_{j < m} \frac{b_j}{j} \, < \, b_1+b_2+\dots+b_{m-1}.
\end{align*}
Hence, if $m \equiv i$ (modulo $n$) and $\Delta(m) \, > \, m(b_1+b_2+\dots+b_{m-1})$, then
$$\textstyle \sum (\bar a^i,A \cap [0,\max I_m)) \,=\, \sum (\bar a^i,A \cap [0,\min I_m)) + \sum (\bar a^i,A \cap I_m) \,<\, 0.$$
By the previous paragraph, $\Delta(m) \, > \, m(b_1+b_2+\dots+b_{m-1})$ for infinitely many $m \equiv i$ (modulo $n$). Thus the finite partial sums of
$\textstyle \sum (\bar a^i,A)$
are negative infinitely often. It follows that $i \notin A^+$. 

Next consider the $j^\mathrm{th}$ subseries $\sum(\bar a^j,A)$ for some $j \in \{1,2,\dots,n\} \setminus \{i\}$. If $m \equiv i$ (modulo $n$), then $a^j_k = -a^i_k$ for all $k \in I_m$. Thus, if $m \equiv i$ (modulo $n$) and $\Delta(m) \, > \, m(b_1+b_2+\dots+b_{m-1})$ then
$$\textstyle \sum (\bar a^j,A \cap I_m) \,=\, -\sum (\bar a^i,A \cap I_m) \,=\, \frac{\Delta(m)}{m} \,>\, b_1+b_2+\dots+b_{m-1}.$$
Just as in the previous paragraph, this positive sum is greater in absolute value than all the preceding terms of the subseries combined. Hence, as before, if $m \equiv i$ (modulo $n$) and $\Delta(m) \, > \, m(b_1+b_2+\dots+b_{m-1})$ then
$$\textstyle \sum (\bar a^j,A \cap [0,\max I_m)) \,>\, 0.$$
Thus the finite partial sums of
$\textstyle \sum (\bar a^j,A)$
are positive infinitely often, and it follows that $j \notin A^-$. 

An essentially identical argument shows that if $n+i \in A^-$, then $i \notin A^-$ and $j \notin A^+$ for all $j \in \{1,2,\dots,n\} \setminus \{i\}$.
\end{proof}

Let $X = \set{i \in \{1,2,\dots,n\}\vphantom{f^{f^2}}}{i,n+i \in A^\vee}$. The previous claim implies that either $A^+ \cap \{1,2,\dots,n\} = \{i\}$ or else $A^- \cap \{1,2,\dots,n\} = \{i\}$, depending on whether $n+i \in A^-$ or $n+i \in A^+$. It follows that $|X| \leq 2$, and from this is follows that  $|A^\vee| \leq n+2$.
\end{proof}

\section{The infinite version}\label{sec:infinite}

In this section we prove an infinite version of Theorem~\ref{thm:main}:

\begin{theorem}\label{thm:infinite}
Let $\set{\bar a^i}{i \in \w}$ be a countably infinite collection of conditionally convergent series. There is some set $A \sub \N$ such that $\sum(\bar a^i,A) = \infty$ for infinitely many $i$.
\end{theorem}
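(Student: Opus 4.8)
The plan is to deduce this from the finite version, Theorem~\ref{thm:main}, which guarantees that $\ser(m)$ is a (finite) natural number for every $m$. We build $A$ as a ``limit'' of finite modifications: recursively construct sets $A_0,A_1,A_2,\dots\sub\N$ together with an increasing chain $D_0\sub D_1\sub\cdots$ of finite sets of indices and, for each $i\in\bigcup_m D_m$, a sign $\sigma_i\in\{1,-1\}$, maintaining the invariant that $\sigma_i\sum(\bar a^i,A_m)=+\infty$ for every $i\in D_m$. We arrange two things: that $\bigcup_m D_m$ is infinite, and that each step perturbs the already-committed series only summably, meaning $\sum_{n\in A_m\triangle A_{m+1}}|a^i_n|<2^{-m}$ for all $i\in D_m$. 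Granting these, the set $A$ obtained as the eventual value of the $A_m$ (well defined for every $n$ relevant to a committed series, since each such $n$ changes status only finitely often) differs from $A_m$ only on a set carrying finite total $|\bar a^i|$-mass for each committed $i$, so $\sigma_i\sum(\bar a^i,A)=+\infty$ for every $i\in\bigcup_m D_m$; since $\bigcup_m D_m$ is infinite, $A$ sends infinitely many of the $\bar a^i$ to infinity.

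The engine of the recursion is the extension step. Suppose $A_m$ witnesses the simultaneous signed divergence of the series indexed by $D_m$. Apply Theorem~\ref{thm:main} to $\ser(|D_m|+1)$ appropriately chosen series, restricted to a far tail $(N,\infty)$ with $N$ so large that the committed series are still ``on track'' beyond $N$: this produces a set realizing divergence of at least $|D_m|+1$ of those series. Truncate that set to a long enough finite window and graft it onto $A_m$ far to the right to obtain $A_{m+1}$. A pigeonhole on the two possible signs pins down the direction $\sigma_{i^\ast}$ of some genuinely new index $i^\ast\notin D_m$, and the grafted block can be chosen with $|\bar a^i|$-mass less than $2^{-m}$ on each of the finitely many committed series, since those series, being conditionally convergent, \emph{converge} on the far tail and so can be left essentially untouched. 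It is exactly the divergence of the positive and negative parts of conditionally convergent series that provides the room to perform such a grafting while keeping its footprint on old commitments small.

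The main obstacle, and the step demanding real care rather than bookkeeping, is to ensure that the extension step never gets irreparably stuck and that the committed sets do not ``escape to infinity''; a priori, $D_m$ with its signs might be a maximal simultaneously-realizable configuration, or each index might end up committed only finitely often. This is where Theorem~\ref{thm:main} must be used as a genuine input: it supplies simultaneously-realizable configurations of arbitrarily large finite size, and the recursion has to be organized --- by choosing carefully at stage $m$ which $\ser(|D_m|+1)$ series to feed into the theorem, so that the previously committed series are actively kept on track rather than abandoned --- so that $|D_m|\to\infty$. Reconciling this forced re-use of old indices with the freedom Theorem~\ref{thm:main} has to capture whichever series it pleases is the delicate point; once it is handled, passing to the limit $A$ and reading off the infinitely many committed indices completes the proof.
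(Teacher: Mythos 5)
There is a genuine gap, and it sits exactly where you place your ``delicate point'': the passage from arbitrarily large finite configurations to a single infinite one is the entire content of the theorem, and your construction does not supply a mechanism for it. Two concrete obstructions. First, whether $\sum(\bar a^{i},A)=\pm\infty$ is a tail property, so if $A_{m+1}$ differs from $A_m$ by a \emph{finite} grafted block, then $A_{m+1}$ sends exactly the same indices to infinity as $A_m$ does; a finite graft can raise a partial sum but can never create a newly committed index, so the invariant ``$\sigma_i\sum(\bar a^i,A_{m+1})=+\infty$ for all $i\in D_{m+1}$'' cannot be maintained as described. If instead you graft an infinite set, you must control its $|\bar a^i|$-mass on each committed $i$, and here the second obstruction appears: the set $B$ produced by Theorem~\ref{thm:main} (applied to tails) is entirely out of your control --- it may select essentially the positive part of a committed $\bar a^i$ on the window, giving unbounded (not $<2^{-m}$) perturbation; the theorem also gives no control over \emph{which} of the input series it captures or with which signs, so feeding the committed series back in does not force them to be ``kept on track,'' and the new configuration may even assign a committed index the opposite sign. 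A set that is simultaneously $|\bar a^i|$-summable for all $i\in D_m$ yet sends a new series to infinity need not exist at all, so the extension step can genuinely get stuck without further structural input.

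For comparison, the paper's proof does not route through the finite theorem at all. It first reduces to the case where every $A\sub\N$ sends only finitely many series to infinity, and works with \emph{tame} sets $A$ --- those on which each subseries $\sum(\bar a^i,A)$ has eventually constant sign, hence either converges \emph{absolutely} or diverges properly. Absolute convergence off the committed coordinates is precisely the control your grafting step is missing: it is what lets unions of tails of tame sets preserve old commitments. The infinitude of the final committed set is then obtained not by re-invoking a finite theorem but by a fusion argument on ``large'' subfamilies of $\{f_A : A \text{ tame}\}$, producing a guide function $g:\w\to\{-1,0,1\}$ and a case analysis. If you want to salvage your outline, you would need to prove something like Lemma~\ref{lem:tame} and restrict all graftings to tame sets; at that point you are reconstructing the paper's argument rather than deducing the result from $\ser(n)<\infty$.
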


As in the previous section, we will write $\bar a^i$ to denote the infinite sequence $\seq{a^i_n}{n \in \N}$.

\begin{definition}
Suppose $\C = \set{\bar a^i}{i \in \w}$ is a collection of conditionally convergent series. $A \sub \N$ is called \emph{tame} with respect to $\C$ if for each $i \in \w$, all the terms of the subseries $\sum(\bar a^i,A)$ have the same sign, with at most finitely many exceptions (not counting zeros). If the collection $\C$ is clear from context, we simply say that $A$ is tame.
\end{definition}

Note that the tameness of $A$ can be expressed in terms of incompatible pairs of ideals: $A \sub \N$ is tame if for every $i \in \w$, $A$ is a member of at least one of the incompatible ideals $\I = \set{X \sub \N}{\set{n \in X}{a^i_n > 0} \text{ is finite}}$ and $\J = \set{X \sub \N}{\set{n \in X}{a^i_n < 0} \text{ is finite}}$.

\begin{notation}
Given a collection $\C = \set{\bar a^i}{i \in \w}$ of conditionally convergent series and $A \sub \N$, 
we define (as in the proof of Theorem~\ref{thm:lbser})
\begin{itemize}
\item[$\circ$] $A^+ = \set{i \in I}{\textstyle \sum(\bar a^i,A) = \infty}$, 
\item[$\circ$] $A^- = \set{i \in I}{\textstyle \sum(\bar a^i,A) = -\infty}$, and
\item[$\circ$] $A^\vee = A^+ \cup A^-$.
\end{itemize}
\end{notation}

\begin{definition}
Suppose $\C = \set{\bar a^i}{i \in \w}$ is a collection of conditionally convergent series. If $f$ is a function from a subset of $\w$ to $\{1,-1\}$, we say that $f$ \emph{is represented by} $A \sub \N$ if $f^{-1}(1) = A^+$ and $f^{-1}(-1) = A^-$.
For each $A \sub \N$, let $f_A$ denote the function represented by $A$; that is, define
$$f_A(i) \,= \, 
\begin{cases}
1 & \text{ if } \sum(\bar a^i,A) = \infty, \\
-1 & \text{ if } \sum(\bar a^i,A) = -\infty
\end{cases}$$
and leave $f_A(i)$ undefined otherwise.
\end{definition}

\begin{lemma}\label{lem:up/down}
Let $\bar a$ be a conditionally convergent series, and let $A \subseteq \N$. If $\sum(\bar a,A) = -\infty$ then $\sum(\bar a,\N \setminus A) = \infty$.
\end{lemma}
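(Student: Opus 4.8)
The plan is to reduce everything to the single algebraic identity relating truncated partial sums, together with the fact that $\bar a$, being conditionally convergent, has $\sum_{n \in \N} a_n$ equal to a \emph{finite} real number. For each $N \in \N$ write $s_N = \sum_{n \le N} a_n$, $s_N^A = \sum_{n \in A,\, n \le N} a_n$, and $s_N^c = \sum_{n \in \N \setminus A,\, n \le N} a_n$, so that $s_N = s_N^A + s_N^c$ for every $N$. Since $\bar a$ is conditionally convergent, $s_N \to s$ for some $s \in \R$.

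First I would unpack the hypothesis $\sum(\bar a, A) = -\infty$ into the statement $s_N^A \to -\infty$ as $N \to \infty$. This is the one step that deserves a sentence of care: the value of the subseries $\sum(\bar a, A) = \sum_{n \in A} a_n$ is defined via the partial sums over the first $k$ elements of $A$, but if $A = \{n_1 < n_2 < \dots\}$ then the $k$-th such partial sum is exactly $s_{n_k}^A$, and the sequence $(s_N^A)_N$ is constant on each block $[n_k, n_{k+1})$; hence the two limits coincide in the extended reals, and likewise $\sum(\bar a, \N \setminus A) = \lim_N s_N^c$. (Note in passing that this already forces $A$ to be infinite, since a finite $A$ would make $\sum_{n \in A} a_n$ a finite sum.) Then, from $s_N^c = s_N - s_N^A$ with $s_N \to s$ and $s_N^A \to -\infty$, I would conclude $s_N^c \to +\infty$: given any $M$, for all sufficiently large $N$ we have $s_N > s - 1$ and $s_N^A < s - M$, whence $s_N^c > M - 1$. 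Therefore $\sum(\bar a, \N \setminus A) = \lim_N s_N^c = \infty$, as claimed.

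I do not expect any real obstacle in this argument; the only point that is not completely automatic is the identification of the subseries limit $\sum_{n \in A} a_n$ with $\lim_N s_N^A$ (and similarly for $\N \setminus A$), which is precisely what legitimizes working with the decomposition $s_N = s_N^A + s_N^c$ at each fixed truncation level $N$ and then passing to the limit.
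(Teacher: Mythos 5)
Your proof is correct; the paper in fact states Lemma~\ref{lem:up/down} without any proof, treating it as immediate, and your argument (decomposing the truncated partial sums as $s_N = s_N^A + s_N^c$, using that $s_N$ converges to a finite limit while $s_N^A \to -\infty$) is exactly the standard one the authors presumably had in mind. Your care in identifying the subseries sum $\sum_{n \in A} a_n$ with $\lim_N s_N^A$ is a reasonable extra step, not a deviation.
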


\begin{lemma}\label{lem:tame}
Let $\set{\bar a^i}{i \in \w}$ be a countable collection of conditionally convergent series. For each $i \leq \w$, there is a tame $A \sub \N$ with $i \in A^+$.
\end{lemma}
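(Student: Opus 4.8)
The plan is to fix the index --- without loss of generality $i = 0$, so that the remaining series are $\bar a^1, \bar a^2, \dots$ --- and to build $A$ as an increasing union of finite blocks $G_0, G_1, G_2, \dots$ (meaning $\max G_{k-1} < \min G_k$), each $G_k$ drawn from a suitably thinned subset of $P := \set{n \in \N}{a^0_n > 0}$. Since $\bar a^0$ is conditionally convergent, $\sum(\bar a^0, P) = \infty$, and keeping $A \sub P$ guarantees at once that $\sum(\bar a^0, A)$ has only positive terms: thus $\set{n \in A}{a^0_n < 0} = \0$ is (trivially) finite, so $A$ is automatically tame with respect to $\bar a^0$, and the partial sums of $\sum(\bar a^0, A)$, taken in increasing order of index, are nondecreasing.

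The preliminary step is a recursive thinning. I would construct a decreasing chain $P = P_0 \supseteq P_1 \supseteq P_2 \supseteq \cdots$ of subsets of $P$ so that, for each $k$, (i) $\sum(\bar a^0, P_k) = \infty$ (in particular $P_k$ is infinite), and (ii) $\bar a^k$ has \emph{constant sign} on $P_k$: exactly one of ``$a^k_n > 0$ for every $n \in P_k$'', ``$a^k_n < 0$ for every $n \in P_k$'', ``$a^k_n = 0$ for every $n \in P_k$'' holds. The recursive step is the one place where an idea is needed: given $P_k$, partition it into the three pieces $\set{n \in P_k}{a^{k+1}_n > 0}$, $\set{n \in P_k}{a^{k+1}_n < 0}$, and $\set{n \in P_k}{a^{k+1}_n = 0}$; since every term $a^0_n$ with $n \in P_k$ is strictly positive and $\sum(\bar a^0, P_k) = \infty$, at least one of these three pieces must have divergent $\bar a^0$-sum, and we let $P_{k+1}$ be such a piece. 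This three-way split --- the observation that thinning to control the sign of $\bar a^{k+1}$ can be done without destroying divergence of the positive part of $\bar a^0$ --- is the crux of the argument; everything else is bookkeeping.

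Finally I would diagonalize. For each $k$ choose a finite $G_k \sub P_k$ with $\min G_k > \max G_{k-1}$ and $\sum(\bar a^0, G_k) \geq 1$; this is possible because $P_k$ is infinite with $\sum(\bar a^0, P_k) = \infty$, so deleting from $P_k$ the finitely many indices $\leq \max G_{k-1}$ leaves the $\bar a^0$-sum divergent. Set $A = \bigcup_k G_k \sub P$. Then the partial sums of $\sum(\bar a^0, A)$ exceed $k$ once block $G_k$ is included, so $\sum(\bar a^0, A) = \infty$, i.e.\ $0 \in A^+$. Moreover, for each fixed $k$ we have $G_j \sub P_j \sub P_k$ whenever $j \geq k$, hence $A \setminus P_k \sub G_0 \cup \dots \cup G_{k-1}$ is finite; combined with the constant sign of $\bar a^k$ on $P_k$, this forces whichever of $\set{n \in A}{a^k_n > 0}$, $\set{n \in A}{a^k_n < 0}$ is ``excluded'' on $P_k$ to be finite (both, if $\bar a^k \equiv 0$ on $P_k$), so $A$ belongs to one of the two incompatible ideals attached to $\bar a^k$. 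Together with the remark about $\bar a^0$ in the first paragraph, this shows $A$ is tame with respect to the whole collection, which completes the proof. (This argument does not need Lemma~\ref{lem:up/down}.)
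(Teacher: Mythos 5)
Your proposal is correct and follows essentially the same route as the paper's proof: restrict to the positive support of $\bar a^0$, recursively thin to a decreasing chain on which each subsequent series has constant sign while the $\bar a^0$-sum stays divergent, and then assemble $A$ from finite blocks of the successive sets each contributing at least $1$ to the $\bar a^0$-sum. The only cosmetic difference is your three-way split by sign versus the paper's two-way split (which lumps zeros with the nonnegative terms); both are fine given that the definition of tameness discounts zeros.
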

\begin{proof}
For convenience, let us set $i = 0$ and show that there is some tame $A \sub \N$ with $\sum(\bar a^0,A) = \infty$. 
An essentially identical argument works for any other value of $i$.

We begin with a recursive construction of a decreasing sequence $C_0 \supseteq C_1 \supseteq C_2 \supseteq \dots$ of subsets of $\N$ and an increasing sequence $k_0 < k_1 < k_2 < \dots$ of non-negative integers. In the end, we will take
$A = \textstyle \bigcup_{n \in \w}C_n \cap (k_n,k_{n+1}].$

To begin, take $C_0 = \set{n \in \N}{a^0_n \geq 0}$. Let $k_0 = 0$ and let $k_1$ be the smallest natural number with the property that
$$\textstyle \sum(\bar a^0,C_0 \cap [1,k_1]) \geq 1.$$
Some such number $k_1$ must exist by our choice of $C_0$.

For the recursive step, we begin with sets $C_0 \supseteq C_1 \supseteq \dots \supseteq C_{\ell-1}$, and with natural numbers $0 = k_0 < k_1 < k_2 < \dots < k_\ell$ satisfying the following inductive hypotheses:
\begin{itemize}
\item[$\circ$] $C_0 \supseteq C_1 \supseteq \dots \supseteq C_{\ell-1}$ and $k_0 < k_1 < k_2 < \dots < k_\ell$.
\item[$\circ$] For each $i \leq j < \ell$, all the numbers in $\set{a^i_n}{n \in C_j}$ have the same sign. 
\item[$\circ$] $\sum(\bar a^0,C_{\ell-1}) = \infty$.
\item[$\circ$] For each $i < \ell$, $\sum(\bar a^0,C_i \cap (k_{i},k_{i+1}]) \geq 1$.
\end{itemize}
We then partition $C_{\ell-1}$ into two sets as follows:
$$C_{\ell-1}^{\geq} = \set{n \in C_{\ell-1}}{a^\ell_n \geq 0},$$
$$C_{\ell-1}^{<} = \set{n \in C_{\ell-1}}{a^\ell_n < 0}.$$
Because $\sum(\bar a^0,C_{\ell-1}) = \infty$, and because every term in this sum is positive, we must have either $\sum(\bar a^0,C_{\ell-1}^\geq) = \infty$ or $\sum(\bar a^0,C_{\ell-1}^<) = \infty$ (possibly both). We choose $C_\ell$ to be either of $C_{\ell-1}^\geq$ or $C_{\ell-1}^<$, so long as $\sum(\bar a^0,C_\ell) = \infty$. Then let $k_{\ell+1}$ be the smallest natural number with the property that 
$$\textstyle \sum(\bar a^0,C_\ell \cap (k_\ell,k_{\ell+1}]) \geq 1.$$
Some such number $k_{\ell+1}$ must exist by our choice of $C_\ell$.
This completes the recursive step of the construction, and it is not difficult to see that all four of the above hypotheses remain true for the next stage of the recursion.

The result of this construction is a decreasing sequence $C_0 \supseteq C_1 \supseteq C_2 \supseteq C_3 \supseteq \dots$ of subsets of $\N$ and an increasing sequence $k_0 < k_1 < k_2 < k_3 < \dots$ of non-negative integers such that
\begin{itemize}
\item[$\circ$] for each $i \leq j \in \w$, all the numbers in $\set{a^i_n}{n \in C_j}$ have the same sign, and
\item[$\circ$] for each $i \in \w$, $\sum(\bar a^0,C_i \cap (k_{i},k_{i+1}]) \geq 1$.
\end{itemize}
Let $A = \textstyle \bigcup_{n \in \w}C_n \cap (k_n,k_{n+1}].$ For each $i \in \w$, the series $\sum(\bar a^i,A)$ consists of terms all having the same sign, with a finite number of exceptions (specifically, the possible exceptions are only the terms with index $< k_i$). Hence $A$ is tame. 
Also, $\sum(\bar a^0,A) = \infty$ because each term of this sum is positive, and for each $i \in \N$ the finite partial sum $\sum(\bar a,A \cap [1,k_{i+1}])$ is at least $\sum_{j = 0}^i\sum(\bar a,A \cap (k_j,k_{j+1}]) \,=\, \sum_{j = 0}^i\sum(\bar a,C_j \cap (k_j,k_{j+1}]) \,\geq\, i+1$.
\end{proof}

\begin{proof}[Proof of Theorem~\ref{thm:infinite}]
Let $\set{\bar a^i}{i \in \w}$ be a countably infinite collection of conditionally convergent series. 
Suppose for some $A \sub \N$ that the domain of $f_A$ is infinite.
If $f_A^{-1}(1)$ is infinite, then $\sum(\bar a^i,A) = \infty$ for infinitely many $i$, and we are done. If not, then $f_A^{-1}(-1)$ must be infinite, in which case $\sum(\bar a^i,\N \setminus A) = \infty$ for infinitely many $i$ by Lemma~\ref{lem:up/down}, and again we are done.

Thus, in order to prove the theorem, it suffices to show that for some $A \sub \N$, the function $f_A$ has infinite domain. Aiming for a contradiction, let us suppose the opposite: that every $A \sub \N$ has finite domain.

Let $\F = \set{f_A}{A \sub \N \text{ is tame}}$. Let us say that a subset $\G$ of $\F$ is \emph{large} if for infinitely many $i \in \w$, there exists some $f \in \G$ such that $f(i) = 1$. 
Observe that $\F$ is large by Lemma~\ref{lem:tame}, and that if a large subset of $\F$ is partitioned into finitely many pieces, then one of those pieces must also be large.

We now use recursion to define a function $g: \w \to \{-1,0,1\}$. This function is constructed via a recursively defined sequence of increasingly large approximations to $g$, namely finite partial functions $g_0, g_1, g_2, \dots$. Along with the $g_i$, we also construct a decreasing sequence $\F_0 \supseteq \F_1 \supseteq \F_2 \supseteq \dots$ of large subsets of $\F$.

To begin, let $\F_0 = \F$ and $g_0 = \0$. At stage $\ell$ of the recursion, we begin with a finite partial function $g_{\ell}: \{0,1,2,\dots,\ell-1\} \to \{-1,0,1\}$ and a large $\F_{\ell} \sub \F$. Partition $\F_{\ell}$ into three parts as follows:
$$\F_{\ell}^1 = \set{f \in \F_{\ell}}{f(\ell) = 1},$$
$$\F_{\ell}^{-1} = \set{f \in \F_{\ell}}{f(\ell) = -1},$$
$$\F_{\ell}^0 = \set{f \in \F_{\ell}}{\ell \notin \mathrm{dom}(f)}.$$
Because $\F_\ell$ is large, at least one of these three pieces is large as well; choose some such piece $\F^j_{\ell}$. Then set $\F_{\ell+1} = \F^j_\ell$ and set $g_\ell = g_{\ell-1} \cup \{(\ell,j)\}$.

The result of this construction is a function $g: \w \to \{-1,0,1\}$ with the property that, for every $\ell \in \w$, the set of all $f \in \F$ satisfying
\begin{itemize}
\item[$(*)$] For $i < \ell$, $i \in \mathrm{dom}(f)$ if and only if $g(i) \neq 0$, and furthermore $f(i) = g(i)$ for $i \in \mathrm{dom}(f)$.
\end{itemize}
is large, because it contains $\F_\ell$.

We now use $g$ as a ``guide'' for recursively constructing some $A \sub \N$ with $\mathrm{dom}(f_A)$ infinite. There are three cases to consider.

\vspace{3mm}
\noindent \emph{Case 1: } Suppose $g(i) = 1$ for infinitely many $i \in \w$.
\vspace{2mm}

In this case, we recursively construct a sequence $A_0, A_1, A_2 \dots$ of tame subsets of $\N$. These sets can then be combined to form the desired set $A$.

To begin the construction, let $i_0$ be the first natural number for which $g(i_0) = 1$, and choose some tame $A_0 \sub \N$ such that $f_{A_0}(i_0) = 1$. Some such $A_0$ exists by property $(*)$ above, and because $g(i_0) = 1$. 

For the recursive step, we begin with natural numbers $i_0 < i_1 < \dots < i_{k-1}$ such that $g(i_0) = g(i_1) = \,\dots\, = g(i_{k-1}) = 1$, and with $A_0, A_1, \dots, A_{k-1} \sub \N$. Let $i_k$ denote the first natural number such that
$$\textstyle i_k \,>\, \max \left( \bigcup_{j < k}\mathrm{dom}(f_{A_j}) \right)$$
(recalling that, by assumption, $f_A$ has finite domain for every $A \sub \N$) and such that 
$g(i_k) = 1.$
Then choose some tame $A_k \sub \N$ such that 
$$f_{A_k}(i_0) = f_{A_k}(i_1) = \,\dots\, = f_{A_k}(i_k) = 1.$$
Some such $A_k$ exists because $g(i_0) = g(i_1) = g(i_2) = \,\dots\, = g(i_k) = 1$. In fact, our choice of $g$ ensures that a ``large'' set of $f_{A_k}$ have this property.

The result of this construction is a sequence $A_0, A_1, A_2, \dots$ of tame subsets of $\N$ with the property that, for all $k$, $f_{A_k}(i_0) = f_{A_k}(i_1) = \,\dots\, = f_{A_k}(i_k) = 1$, and $i_\ell \notin \mathrm{dom}(f_{A_k})$ for all $\ell > k$.
In particular, we have
\begin{itemize}
\item[$\circ$] for each $\ell \leq k$, $\sum(\bar a^{i_\ell},A_k) = \infty$, and all but finitely many terms of this sum have the same sign (necessarily positive), and
\item[$\circ$] for each $\ell > k$, $\sum(\bar a^{i_\ell},A_k)$ converges absolutely. (This follows from the tameness of $A_k$ together with the fact that $i_\ell \notin \mathrm{dom}(f_{A_k})$.)
\end{itemize}

The first bullet point implies that for each $k \in \w$, there is some $M_k \in \N$ such that $a_n^{i_\ell} \geq 0$ for all $\ell \leq k$ and $n \geq M_k$. Let
$$\textstyle A = \bigcup_{k \in \w} A_k \cap [M_k,\infty).$$
We claim that $\sum(\bar a^{i_k},A) = \infty$ for every $i_k$, $k \in \w$. To see this, fix $k \in \w$ and partition $A$ into the following two sets:
\begin{align*}
A_{<k} & =\, \textstyle \bigcup_{\ell < k} A_\ell \cap [M_\ell,\infty) \\
A_{\geq k} & =\, \textstyle \bigcup_{\ell \geq k} A_\ell \cap [M_\ell,\infty).
\end{align*}
It is clear that $\sum(\bar a^{i_k},A_{< k})$ converges absolutely and that $\sum(\bar a^{i_k},A_{\geq k}) = \infty$. Because $A = A_{<k} \cup A_{\geq k}$, it follows that $\sum(\bar a^{i_k},A) = \infty$, as desired.

\vspace{3mm}
\noindent \emph{Case 2: } Suppose $g(i) = -1$ for infinitely many $i \in \w$.
\vspace{2mm}

Proceeding just as in Case 1, we may find a set $A \sub \N$ such that $\sum(\bar a^i,A) = -\infty$ for infinitely many $i \in \w$.

\vspace{3mm}
\noindent \emph{Case 3: } Suppose $g(i) = 0$ for infinitely many $i \in \w$.
\vspace{2mm}

As in case 1, we construct a sequence $A_0, A_1, A_2 \dots$ of tame subsets of $\N$ along with a sequence $i_0 < i_1 < i_2 < \dots$ of natural numbers, and afterward these will be used to define $A$.

To begin, let $i_0$ be the least natural number such that $g(i_0) = 0$, and choose some tame $A_0 \sub \N$ such that $f_{A_0}(i_0) = 1$. Some such $A_0$ exists by Lemma~\ref{lem:tame}. 

For the recursive step, we begin with natural numbers $i_0 < i_1 < \dots < i_{k-1}$ and with $A_0, A_1, \dots, A_{k-1} \sub \N$. Recall that, by the definition of $g$, there is a large set $\G$ of functions $f_A$ such that $f_A(i_0) = f_A(i_1) = \,\dots\, = f_A(i_{k-1}) = 0$. By the definition of ``large'' we may find some tame $A_k \sub \N$ such that 
$$f_{A_k}(i_0) = f_{A_k}(i_1) = \,\dots\, = f_{A_k}(i_{k-1}) = 0$$
and such that $f_{A_k}(i_k) = 1$ for some $i_k > \max \left( \bigcup_{j < k}\mathrm{dom}(f_{A_j}) \right)$.

The result of this construction is a sequence $A_0, A_1, A_2, \dots$ of tame subsets of $\N$ with the property that, for all $k$, $f_{A_k}(i_k) = 1$, and $i_\ell \notin \mathrm{dom}(f_{A_\ell})$ for all $\ell \neq k$. In particular, we have
\begin{itemize}
\item[$\circ$] $\sum(\bar a^{i_k},A_k) = \infty$ for each $k$, and
\item[$\circ$] $\sum(\bar a^{i_\ell},A_k)$ converges absolutely for each $\ell \neq k$.
\end{itemize}

For each $k$, choose $M_k \in \N$ large enough so that
$$\textstyle \sum\left(\seq{|a^{i_\ell}_n|}{n \in \N},A_k \cap [M_k,\infty)\right) < \nicefrac{1}{2^k}$$
for each $\ell < k$, which is possible because each of the finitely many series $\sum(\bar a^{i_\ell},A_k)$, for $\ell < k$, converges absolutely. Let
$$\textstyle A = \bigcup_{k \in \w} A_k \cap [M_k,\infty).$$
To finish the proof, we claim that $\sum(\bar a^{i_k},A) = \infty$ for every $i_k$, $k \in \w$. To see this, fix $k \in \w$ and partition $A$ into the following three sets:
\begin{align*}
A_{<k} & =\, \textstyle \bigcup_{\ell < k} A_\ell \cap [M_\ell,\infty) \\
A_{k} \ \, & =\, A_k \cap [M_k,\infty) \\
A_{>k} & =\, \textstyle \bigcup_{\ell > k} A_\ell \cap [M_\ell,\infty).
\end{align*}
The series $\sum(\bar a^{i_k},A_{>k})$ converges absolutely, because
$$\textstyle \sum\left(\seq{|a^{i_\ell}_n|}{n \in \N},A_{>k}\right) \qquad \qquad \qquad \qquad \qquad \qquad$$
$$\qquad \qquad \leq\, \sum_{\ell > k} \textstyle \sum \left( \seq{|a^{i_\ell}_n|}{n \in \N},A_\ell \cap [M_\ell,\infty) \right)
\displaystyle \leq\, \sum_{\ell > k} \nicefrac{1}{2^\ell} \,=\, \frac{1}{2^{k-1}}.$$
The series $\sum(\bar a^{i_k},A_{<k})$ converges absolutely, as $A_{<k} = \bigcup_{\ell < k} A_\ell \cap [M_\ell,\infty)$ and each of the (finitely many) series $\sum(\bar a^{i_k},A_\ell \cap [M_\ell,\infty))$, where $\ell < k$, converges absolutely.
On the other hand, $\sum(\bar a^{i_k},A_k) = \infty$. Because $A = A_{<k} \cup A_k \cup A_{>k}$, it follows that $\sum(\bar a^{i_k},A) = \infty$ as desired.
\end{proof}

This theorem raises the question of whether there is a corresponding infinite version of the main theorem, dealing with arbitrary pairs of incompatible ideals:
\begin{itemize}
\item[$\circ$] Given an infinite sequence $(\I_1,\J_1), (\I_2,\J_2), (\I_2,\J_2),\dots$ of incompatible ideals on some set $X$, is there a single $A \sub X$ that chooses between infinitely many of these pairs?
\end{itemize}
The answer to this question is negative, as the following example shows.

\begin{example}
Let $X = 2^\w$ denote the set of all functions $\w \to \{0,1\}$, and let $X$ be endowed with its usual product topology as the Cantor space. 
For each $n \in \w$, let
$\I_n$ denote the set of all $A \sub X$ that are nowhere dense in $\set{x \in X}{x(n) = 0}$, and let $\J_n$ denote the set of all $A \sub X$ that are nowhere dense in $\set{x \in X}{x(n) = 1}$.

It is not difficult to check that $\I_n$ and $\J_n$ are incompatible ideals on $X$ for all $n \in \w$. We claim that any given $A \sub X$ cannot choose between infinitely many of the pairs $(\I_n,\J_n)$. Indeed, if $A$ chooses between $\I_n$ and $\J_n$ for any $n$, then $A$ is somewhere dense in $X$, i.e., $\closure{A}$ contains a basic open subset of $X$. In other words, $\closure{A}$ contains a set of the form
$$[s] \,=\, \set{x \in X}{x \rest \mathrm{length}(s) = s},$$
where $s$ is a function $m \to \{0,1\}$ for some $m \in \w$. 
But this implies $A \notin \I_k$ and $A \notin \J_k$ for all $k \geq m$. Thus $A$ does not choose between $\I_k$ and $\J_k$ for any $k \geq m$. $\hfill \tiny{\qed}$
\end{example}

Note that this example can be modified to make the set $X$ countable: simply replace $2^\w$ with a countable dense subset of $2^\w$ in the example above. We can also modify the example to make the ideals $\I_n$ and $\J_n$ into $P$-ideals: simply replace ``nowhere dense'' with ``meager'' in the example above, and note that if $A \sub 2^\w$ is non-meager, then there is some open $U$ such that $A \cap V$ is non-meager for every open $V \sub U$.

In closing, let us point out that the ideas in this paper emerged from set-theoretic investigations into cardinal characteristics of the continuum in \cite{BBH}. In the course of these investigations, the question arose: \emph{How small can a collection $\C$ of conditionally convergent series be with the property that every $A \sub \N$ fails to send some member of $\C$ to infinity?} (Specifically, the answer to this question is the so-called ``Galois-Tukey dual'' -- see \cite{Vojtas}, or \cite[Section 4]{Blass} -- of the uncountable cardinal $\ss_i$ as defined in \cite{BBH}.) We suspected that the answer to this question should be an uncountable cardinal number. It was a surprise to discover that the correct answer is $4$, or, in the terminology of Section~\ref{sec:series}, the least $n$ such that $n \neq \ser(n)$. This surprise led us to investigate the function $\ser(n)$ generally, and its upper bounds $\Ser(n)$ and $\Hyp(n)$.

In the same way that the function $\ser(n)$ is related to the cardinal invariant $\ss_i$ from \cite{BBH}, the function $\Ser(n)$ suggests a new cardinal invariant of the continuum, a natural upper bound for $\ss_i$ . Namely, this cardinal invariant is defined as the answer to the following question: \emph{How small can a collection $\A$ of subsets of $\N$ be with the property that for every pair of incompatible ideals on $\N$, some $A \in \A$ chooses between them?} To end our paper, we show that this ``new'' cardinal invariant is not really new at all:

\begin{theorem}
Suppose $\A$ is a collection of subsets of $\N$ such that for every pair of incompatible ideals on $\N$, some $A \in \A$ chooses between them. Then $\card{\A} = \continuum$.
\end{theorem}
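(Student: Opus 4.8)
The plan is to prove the (trivial) inequality $\card{\A}\le\continuum$ together with $\card{\A}\ge\continuum$, the second being the content of the theorem. The key point is that the hypothesis forces $\A$ to \emph{separate} the free ultrafilters on $\N$ — meaning that for any two distinct free ultrafilters some member of $\A$ lies in exactly one of them — and that any family with this property must have cardinality $\continuum$ for topological reasons.

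First I would isolate a convenient family of incompatible pairs. For a free ultrafilter $\U$ on $\N$, let $\U^* = \set{A\sub\N}{\N\setminus A\in\U}$ be the dual ideal; because $\U$ is free this is a proper ideal (it contains every finite set), it is maximal, and hence $A\in\U^*$ if and only if $A\notin\U$. Given distinct free ultrafilters $\U\neq\V$, pick $A\in\U\setminus\V$; then $C=\N\setminus A$ satisfies $C\in\U^*$ and $\N\setminus C=A\in\V^*$, so the pair $(\U^*,\V^*)$ is incompatible. Using $A\in\U^*\Leftrightarrow A\notin\U$ (and likewise for $\V$), a one-line check shows that $A$ chooses between $\U^*$ and $\V^*$ exactly when $A$ belongs to precisely one of $\U$ and $\V$. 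Feeding each such pair into the hypothesis on $\A$ therefore yields: for all distinct free ultrafilters $\U\neq\V$ there is $A\in\A$ lying in exactly one of them; i.e.\ $\A$ separates the free ultrafilters on $\N$.

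Now consider the map $e$ from the space $\N^*=\beta\N\setminus\N$ of free ultrafilters into the Cantor cube $2^{\A}$ defined by $e(\U)(A)=1$ if $A\in\U$ and $e(\U)(A)=0$ otherwise. This map is continuous (the preimage of the basic clopen set $\{x:x(A)=1\}$ is $\set{\U\in\N^*}{A\in\U}$, which is clopen) and, by the separation property just established, injective. A continuous injection from a compact space into a Hausdorff space is a homeomorphism onto its image, so $\N^*$ is homeomorphic to a subspace of $2^{\A}$. Hence
$$\continuum \,=\, w(\N^*) \,\le\, w\bigl(2^{\A}\bigr) \,\le\, \card{\A}+\aleph_0,$$
where $w(\cdot)$ denotes topological weight, $w(\N^*)=\continuum$ is a standard fact, and $w(2^\kappa)\le\kappa+\aleph_0$. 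Since $\continuum>\aleph_0$, this forces $\card{\A}\ge\continuum$, and combined with $\card{\A}\le\continuum$ we conclude $\card{\A}=\continuum$.

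The only thing that needs a little care is the free-versus-principal bookkeeping: a principal ultrafilter does not have a proper \emph{ideal} as its dual (the dual omits some finite set), which is why the hypothesis can only be applied to pairs $(\U^*,\V^*)$ with $\U,\V$ free, and why the final step runs over $\N^*$ rather than all of $\beta\N$; this costs nothing, as there are already $2^{\continuum}$ free ultrafilters and $w(\N^*)=\continuum$. Everything else — that $\U^*$ is a proper maximal ideal, the equivalence ``$A$ chooses $\Leftrightarrow$ $A$ is in exactly one of $\U,\V$'', and the compactness/continuity facts — is routine. (Alternatively, the last step can be phrased algebraically: letting $\BB$ be the Boolean subalgebra of $\mathcal P(\N)$ generated by $\A$, the restriction $\N^*\to\mathrm{Stone}(\BB)$ is a continuous injection, hence an embedding, so $\card{\BB}=w(\mathrm{Stone}(\BB))\ge w(\N^*)=\continuum$, while $\card{\BB}\le\card{\A}+\aleph_0$.)
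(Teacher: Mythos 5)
Your proof is correct and follows essentially the same route as the paper's: both use the dual ideals $\hat u=\set{A}{A\notin u}$ of distinct non-principal ultrafilters as the supply of incompatible pairs, deduce that $\A$ separates points of $\N^*$, and then invoke compactness of $\N^*$ together with $w(\N^*)=\continuum$ to force $\card{\A}\geq\continuum$. The only (cosmetic) difference is in the endgame: you embed $\N^*$ into $2^{\A}$ and compare weights, while the paper closes $\A$ under Boolean operations and shows the resulting clopen family is a basis — both steps rest on the same minimality property of compact Hausdorff topologies.
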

\begin{proof}
Let $\A$ be any collection of subsets of $\N$ such that for every pair of incompatible ideals on $\N$, some $A \in \A$ chooses between them. Let $\C$ denote the closure of $\A$ under the Boolean operations of taking complements, finite unions, and finite intersections. Because $|\C| = \max \{\aleph_0,|\A|\}$, to prove the theorem it suffices to show that $|\C| = \continuum$.

Our proof uses the topological space $\N^*$, the space of all non-principal ultrafilters on $\N$. Recall that if $A \sub \N$, then $A^* = \set{u \in \N^*}{A \in u}$ is a clopen subset of $\N^*$, and the sets of this form constitute a basis for $\N^*$.

For each $u \in \N^*$, let $\hat u = \set{A \sub \N}{\N \setminus A \in u} = \set{A \sub \N}{A \notin u}$. Each $\hat u$ is an ideal on $\N$. Furthermore, if $u,v \in \N^*$ and $u \neq v$, then there is some $A \sub \N$ such that $\N \setminus A \in \hat u$ and $A \in \hat v$. Thus for any distinct $u,v \in \N^*$, $\hat u$ and $\hat v$ are incompatible ideals on $\N$. 

Therefore, whenever $u,v \in \N^*$ with $u \neq v$, some $A \in \C$ chooses between $\hat u$ and $\hat v$. This could mean that $A \in \hat u \setminus \hat v$, in which case $A \in \hat v$ and $\N \setminus A \in \hat u$, or else that $A \in \hat v \setminus \hat u$, in which case $A \in \hat u$ and $\N \setminus A \in \hat v$. In either case (because $\C$ is closed under taking complements), we see that there are $A,A' \in \C$ such that $u \in A^*$ and $v \in (A')^*$.

Let $\C^* = \set{A^*}{A \in \C}$. By the previous paragraph, $\C^*$ separates points in $\N^*$, meaning that for any two distinct points of $\N^*$, there are disjoint sets in $\C^*$ each containing one of the two points. We claim that this implies $\C^*$ is a basis for $\N^*$. To see this, first note that, because $\C$ is closed under finite unions and intersections, $\C^*$ is as well (because $(A \cup B)^* = A^* \cup B^*$ and $(A \cap B)^* = A^* \cap B^*$ for any $A,B \sub \N$). Thus $\C^*$ forms the basis for some topology $\tau$ on $\N^*$. The topology $\tau$ is Hausdorff (because $\C^*$ separates points), and is coarser than the usual topology on $\N^*$. But the usual topology on $\N^*$ is compact and Hausdorff, and it is well-known that no Hausdorff topology is strictly coarser than a compact Hausdorff topology \cite[Corollary 3.1.14]{Engelking}. Thus $\tau$ is coarser than the usual topology on $\N^*$, but not strictly coarser: that is, they are the same. Hence $\C^*$ is a basis for the usual topology on $\N^*$.

The usual topology on $\N^*$ has no basis of size $<\!\continuum$ \cite[Theorem 3.6.14]{Engelking}, so the previous paragraph implies $|\C| = \continuum$ as claimed.
\end{proof}



\end{document}